\tikzstyle{startstop} = [rectangle, rounded corners, minimum width=4cm, minimum height=2cm,text centered, draw=black, fill=red!30]
\tikzstyle{arrow} = [thick,->,>=stealth]
\def\namedlabel#1#2{\begingroup
    #2%
    \def\@currentlabel{#2}%
    \phantomsection\label{#1}\endgroup
}
\tikzset{/tikz/notestyleraw/.append style={text=black}}
\newtheorem{thm}{Theorem}[section]
\newtheorem{lem}[thm]{Lemma}
\newtheorem{defn}[thm]{Definition}
\newtheorem{prop}[thm]{Proposition}
\newtheorem{cor}[thm]{Corollary}
\newtheorem{que}{Open Problem}
\newtheorem{rmk}[thm]{Remark}
\newcommand{\be}{\begin{eqnarray}}
\newcommand{\ee}{\end{eqnarray}}
\newcommand{\ben}{\begin{eqnarray*}}
\newcommand{\een}{\end{eqnarray*}}
\newcommand{\beq}{\begin{equation}}
\newcommand{\eeq}{\end{equation}}
\newcommand{\beal}{\begin{aligned}}
\newcommand{\enal}{\end{aligned}}
\newcommand{\eps}{\varepsilon}
\newcommand{\lb}{\lambda}
\newcommand{\T}{\mathbb{T}}
\newcommand{\R}{\mathbb{R}}
\newcommand{\N}{\mathbb{N}}
\newcommand{\om}{\omega}
\newcommand{\Om}{\Omega}
\newcommand{\dt}{\delta}
\newcommand{\cE}{\mathcal{E}}
\newcommand{\cM}{\mathcal{M}}
\newcommand{\cC}{\mathcal{C}}
\newcommand{\cP}{\mathcal{P}}
\newcommand{\cF}{\mathcal{F}}
\newcommand{\cU}{\mathcal{U}}
\newcommand{\wt}{\widetilde }
\newcommand{\ol}{\overline }
\title[nonlinear H-J equations with state-constraint]{\scshape \textsc{Generalized convergence of solutions for nonlinear Hamilton--Jacobi equations with state-constraint}}
\author{Son N.T. Tu$^\dagger$ and  Jianlu Zhang*}
\address{Son. N. T. Tu\\ $^\dagger$Department of Mathematics, Michigan State University\\
East Lansing, Michigan 48824, USA}
    \email{tuson@msu.edu}
\address{Jianlu Zhang\\ *Hua Loo-Keng Key Laboratory of Mathematics \& Mathematics Institute
            \\Academy of Mathematics and systems science
            \\Chinese Academy of Sciences, Beijing 100190, China}
        \email{jellychung1987@gmail.com}
\subjclass[2010]{35B40,\ 37J50,\ 49L25}
\keywords{Hamilton-Jacobi equations, state-constraint problems, Mather measures, ergodic constant, viscosity solution}
\thanks{{\it Statements and Declarations:} The authors declare no conflict of interest.}
\thanks{{\it Data Availability:} We have no data associate for the paper.}
\date{\today}
\begin{document}
\maketitle



\begin{abstract} 
For a continuous Hamiltonian $H : (x, p, u) \in T^*\mathbb{R}^n \times \mathbb{R}\rightarrow \mathbb{R}$, we consider the asymptotic behavior of associated Hamilton--Jacobi equations with state-constraint
\begin{equation*}
    \begin{cases}
        \begin{aligned}
        H(x, Du, \lambda u)\leq C_\lambda, \quad x\in\Omega_\lambda\subset \mathbb{R}^n,\\
        H(x, Du, \lambda u)\geq C_\lambda, \quad x\in\overline{\Omega}_\lambda\subset \mathbb{R}^n,
        \end{aligned}
    \end{cases}
\end{equation*}
as $\lambda\rightarrow 0^+$. When $H$ satisfies certain convex, coercive and monotone conditions, the domain $\Omega_\lambda:=(1+r(\lambda))\Omega$ keeps bounded, star-shaped 
for all $\lambda>0$ with $\lim_{\lambda\rightarrow 0^+}r(\lambda)=0$, and $\lim_{\lambda\rightarrow 0^+}C_\lambda=c(H)$ equals the ergodic constant of $H(\cdot,\cdot,0)$, we 
prove the convergence of solutions $u_\lambda$ to a specific solution of the critical equation
\begin{equation*}
    \begin{cases}
        \begin{aligned}
            H(x, Du, 0)\leq c(H), \quad x\in\Omega\subset\mathbb{R}^n,\\
            H(x, Du, 0)\geq c(H), \quad x\in\overline{\Omega}\subset\mathbb{R}^n.
        \end{aligned}
    \end{cases}
\end{equation*}
 We also discuss the generalization of such a convergence for equations with more 
 general $C_\lambda$ and $\Omega_\lambda$.
\end{abstract}

\section{Introduction}

Let $\Om\subset\R^n$ be a bounded open set and $T^*\Om$ be its cotangent bundle. For any continuous function  $H : T^* \Om\times \R\rightarrow \R$, the so called  {\it contact Hamiltonian}, we are interested in the {\it nonlinear Hamilton--Jacobi (H-J) equation} with state-constraint:
\beq
\left\{
\begin{aligned}
    H(x, Du,  u)\leq c, \quad x\in\Om\subset\R^n,\\
    H(x, Du, u)\geq c, \quad x\in\ol{\Om}\subset\R^n.
\end{aligned}
\right.
\eeq
The {\it viscosity solutions} for such a equation under mild assumptions on $H$, $\Om$ and $c\in\R$ has been studied deeply in the literature, e.g., \cite{C-DL, L, Mitake2008, MT, S1, S2} and the references therein. Typically, a {\it comparison principle} is used to ensure the uniqueness of the viscosity solution. However, this principle does not hold in critical cases:
\begin{equation}\label{eq:hj-c}
    \left\{
    \begin{aligned}
        H(x, Du, 0)\leq c, \quad x\in\Om\subset\R^n,\\
        H(x, Du, 0)\geq c, \quad x\in\ol{\Om}\subset\R^n.
    \end{aligned}
    \right.
\end{equation}
Besides, the critical H-J equation \eqref{eq:hj-c} is solvable only for a unique value of $c\in\R$ (so called {\it ergodic constant} {\rm or} {\it Ma\~n\'e's critical value}),  which is defined by
\beq\label{eq:mane-cri}
    c(H):=\inf\{c\in\R\;|\;H(x,D u(x),0)\leq c \text{ possesses a solution in $\Om$}\}.
\eeq
Since \eqref{eq:hj-c} may possess multiple solutions (even up to additive constants), a selection principle should be explored, to pick up certain distinguished solution of the critical equation, as a limit of a family of solutions of the nonlinear H-J equations with respect to certain parameters. Precisely, we consider the following parametrized equation 
\beq\label{eq:hj-lb}
\quad\left\{
\begin{aligned}
    H(x,D  u(x),\lb u(x))\leq C_\lb,\quad x\in \Om_\lb,\\
    H(x,D  u(x),\lb u(x))\geq C_\lb,\quad x\in \ol\Om_\lb,\\
\end{aligned}
\right.
\eeq
with both $C_\lb\in\R$ and $\Om_\lb\subsetneq\R^n$ uniformly bounded with respect to $\lb>0$. Once 
\begin{equation*}
    \lim_{\lb\rightarrow 0^+}\Om_\lb=\Om, \qquad \lim_{\lb\rightarrow 0^+}C_\lb=c(H)
\end{equation*}
in some sense, we aim to prove the convergence of the solution $u_\lb$ of \eqref{eq:hj-lb} to a specific solution $u_0$ of 
\begin{equation}\label{eq:hj-0}
    \quad
    \left\{
    \begin{aligned}
        H(x,D  u(x),0)\leq c(H),\quad x\in \Om,\\
        H(x,D  u(x),0)\geq c(H),\quad x\in \ol\Om,
    \end{aligned}
    \right.
\end{equation}
as $\lb\rightarrow 0^+$. The choice $C_\lambda \to c(H)$ is chosen such that the family of the corresponding solutions is uniformly bounded without necessary normalization (see Theorem \ref{thm:change0-domain-c-ener} for the general case). 

Such a problem was initiated by Lions, Papanicolaou and Varadhan in 1987. In their unpublished  seminal paper \cite{LPV}, they firstly derived the ergodic costant $c(H)$ by a homogenization approach for critical H-J equations on the torus $\T^n$. They also proposed a {\it vanishing discount} scheme to get $c(H)$, namely, for the solution $u_\lb$ of the {\it discounted equation} 
\begin{equation*}
    \lb u+H(x,Du)=0,\quad\ x\in\T^n, \,\lb>0,
\end{equation*}
there holds $\lim_{\lb\rightarrow 0^+}\lb u_\lb=-c(H)$. As a sequel, the convergence of $u_\lb+\lambda^{-1}c(H)$ as $\lb\rightarrow 0^+$ was finally solved in \cite{DFIZ}, by relating the  convergence of $u_\lb$ to the asymptotic behavior of certain {\it Mather measures} from {\it weak KAM theory}. As a matter of fact, partial clues have been noticed in earlier works \cite{Gom08,ISM}. It is remarkable that in \cite{CCIZ, IMT1,IMT2}, a duality representation of the Mather measure was independently obtained, which was later used in wide vanishing-discount problems, e.g. \cite{IS} for $\R^n$ space, \cite{Tu} for discounted state-constraint equation, \cite{ishii_vanishing_2020-system} for discounted system of equations and \cite{IMT1,IMT2, MT} for second order Hamilton--Jacobi equations.\smallskip

In our case, the Hamiltonians depends on $u$ nonlinearly, so it is not easy to use the duality representation and we have to resort to a different approach to obtain the Mather measures. The first attempt was made in \cite{WYZ} for $C^2-$smooth contact H-J equations on closed manifolds, which was later generalized in \cite{Ch,CFZZ,WYZ2, Z} under wider conditions.\smallskip


In the current paper we aim to explore the asymptotic behavior of $u_\lb$ as $\lb\rightarrow 0^+$, for nonlinear H-J equations with state-constraints. The main difference of our setting comparing to previous works \cite{Ch, Tu, WYZ} is the totally variable $\Om_\lb$ and $C_\lb$. Moreover, as we will see, the Hamiltonian's nonlinear dependence on $u$ leads to a more complicated expression of  $\lim_{\lb\rightarrow 0^+}u_\lb$, which is deeply relevant to the deformation of Mather measures with respect to the changing rate of $\Om_\lb$ as $\lambda\to 0^+$. That in turn supplies a criterion for the limit of $(C_\lambda - c(H))/\lambda$ as $\lambda\rightarrow 0^+$ (see subsection \ref{ss}). \smallskip

Recall that for discounted Hamiltonians,  the convergence of solution $u_\lambda$ of \eqref{eq:hj-lb} was proved for two important cases: $C_\lambda = c(H)$ and $C_\lambda = c(\lambda)$ (the ergodic constant of $H(\cdot,\cdot,0)$ on $\Omega_\lambda$) in \cite{Tu}. Besides, the regularity of $c(\lb)$ with respect to $\lambda$ is also studied in \cite{Tu}. This paper extends the results in \cite{Tu} from two aspects: $H$ is nonlinearly dependent of $u$; and more general $C_\lb$, by using the nonlinear tools developed from \cite{WWY, WYZ}.

\smallskip

Finally, let us remark that in the context of PDEs, extensive work has been done on the well-posedness of H-J equations and fully nonlinear elliptic equations with state constraints, particularly for the discounted setting. The problem for the convex first-order setting was initially studied in \cite{S1, S2} using optimal control techniques. The general first-order problem, including the nonconvex case, was addressed in \cite{C-DL}. For more classical properties of first-order state-constraint problems, we refer to \cite{Bardi1997, Barles1994, tran_hamilton-jacobi_2021}, and the references therein. For second-order state-constraint problems, we refer to \cite{YuTu2021large, lasry_nonlinear_1989}. Additionally, a similar investigation into the regularity of the ergodic constant for some second-order PDEs has been conducted in \cite{barles_large_2010, BKT2022}. \smallskip

We conclude with a brief overview of some related boundary value problems, though this list is not exhaustive. For discounted problems, periodic, state-constraint, Dirichlet, and Neumann type boundary conditions are considered in \cite{IMT1, IMT2}. Fully nonlinear settings for Dirichlet and Neumann type boundary conditions are addressed in \cite{barles_fully_1993, barles_fully_1991, ishii_viscosity_1990, lions_neumann_1985, patrizi_neumann_2008}. Related oblique derivative boundary value problems are studied in \cite{dupuis_oblique_1990, ishii_fully_1991, lions_linear_1986, CRMATH_2002__334_8_661_0}. Asymptotic behaviors of solutions to various boundary conditions (besides the state-constraint mentioned above) are considered in \cite{barles_large_2012, barles_pde_2012, giga_singular_2014, ishii_long-time_2011}. Some aspects of weak KAM theory for Neumann type boundary conditions are considered in \cite{ishii_weak_2011, serea_viscosity_2007}.

\subsection{Setting}\label{ss1}
In this paper, $\Om$ is meant by an open, bounded, connected subset of $\R^n$. Without loss of generality, we always assume $0\in\Om$ and the following:

\begin{description}[style=multiline, labelwidth=1.1cm, leftmargin=2cm]
    \item[\namedlabel{itm:C0''}{$(\mathcal{C}_0)$}] $\Omega$ is star-shaped (with respect to the origin) and there exists $\theta>0$ such that $\mathrm{dist}(x,\overline{\Omega})>\theta \varepsilon$ for all $x\in (1+\varepsilon)\partial \Omega$ for all $\varepsilon>0$.
\end{description}
We remark that the condition $\Omega$ is star-shaped can be removed in \ref{itm:C0''} (see \cite{Tu}). For the sake of variable domains, we always take 
\[
\Om_\lb:=(1+r(\lb))\Om
\]
with continuous $r:\lb\in(0,+\infty)\rightarrow\R$ tending to zero as $\lb\rightarrow 0^+$ such that

\begin{description}[style=multiline, labelwidth=1.1cm, leftmargin=2cm]
    \item[\namedlabel{itm:L}{$(\mathcal{L})$}] $\lim_{\lambda\rightarrow 0^+} r(\lambda)/\lambda=\eta\in\mathbb{R}$.
\end{description}

Without loss of generality, we can assume the existence of an open, connected  and bounded set $\cU\subset\R^n$, such that  $\ol{\Om}_\lb\subset\cU$ for all $\lb>0$. Consequently, the Hamiltonian $H:T^*\cU\times\R$ is assumed to be a continuous function satisfying the following conditions:
\begin{description}[style=multiline, labelwidth=1.1cm, leftmargin=2cm]
     \item[\namedlabel{itm:C1'}{$(\mathcal{C}_1)$}] There exists $\kappa>0$ such that $H(x,p,u)-\kappa u$ is nondecreasing in $u$ for any $(x,p)\in T^*\ol\cU$.
    \item[\namedlabel{itm:C2}{$(\mathcal{C}_2)$}] For any $(x,u)\in \cU\times\R$, the function $H(x,\cdot,u)$ is convex in $p\in T_x^*\cU$.
    \item[\namedlabel{itm:C3}{$(\mathcal{C}_3)$}] $\lim_{|p|\rightarrow+\infty}H(x,p,u)=+\infty$ for any $(x,u)\in\cU\times\R$.
    \item[\namedlabel{itm:C4}{$(\mathcal{C}_4)$}] There exists a {\it local modulus}\footnote{A {\it modulus} $\varpi:x\in [0,+\infty)\rightarrow\R$ is a continuous nondecreasing, nonnegative, subadditive function such that $\varpi(0)=0$. A {\it local modulus} $\varpi :(x,y)\in [0,+\infty)\times[0,+\infty)\rightarrow\R$ is a modulus in $y$ for each $x\geq 0$,  continuous in $(x,y)$ and nondecreasing with respect to $x$.} $\varpi_1:[0,+\infty)\times[0,+\infty)\rightarrow[0,+\infty)$ such that 
    \begin{equation*}
        |H(x,p,u)-H(y,p,u)|\leq \varpi_1\big(|u|, |x-y|(|p|+1)\big)
    \end{equation*}
    for all $x,y\in\ol\cU,\; p\in T^*_x\ol\cU,\; u\in\R$.
    \item[\namedlabel{itm:C5}{$(\mathcal{C}_5)$}] There exists a local modulus $\varpi_2:[0,+\infty)\times[0,+\infty)\rightarrow[0,+\infty)$ such that 
    \begin{equation*}
        |H(x,p,u)-H(x,q,u)|\leq \varpi_2(|u|, |p-q|)
    \end{equation*}
    for all $(x,u)\in\ol\cU\times\R, p,q\in T_x^*\ol\cU$.
    \item[\namedlabel{itm:C6}{$(\mathcal{C}_6)$}] $\partial_u H(x,p,0)$ exists and there exists a local modulus $\varpi_3:[0,+\infty)\times[0,+\infty)\rightarrow[0,+\infty)$ such that 
    \begin{equation*}
        \Big|\frac{H(x,p,u)-H(x,p,0)}{u}-\partial_u H(x,p,0)\Big|\leq \varpi_3(|p|, |u|)
    \end{equation*}
    for any $(x,p)\in T^*\overline{\cU}$.
    \item[\namedlabel{itm:C7}{$(\mathcal{C}_7)$}] $\partial_x H(x,p,0)$ exists and there exists a local modulus $\varpi_4:[0,+\infty)\times[0,+\infty)\rightarrow[0,+\infty)$ such that 
    \begin{equation*}
        \Big|\frac{H(x,p,0)-H((1+\dt)x,p,0)}{\dt}+\langle\partial_x H(x,p,0),x\rangle\Big|\leq \varpi_4(|p|, |\dt|)
    \end{equation*}
    for any $(x,p)\in T^*\overline{\cU}$.
\end{description}

Throughout the paper, unless otherwise stated,  solution, subsolution, supersolution  are meant by the viscosity solution, viscosity subsolution, viscosity supersolution, respectively.

\subsection{Main results} For a Hamiltonian $H$ satisfying \ref{itm:C2} and \ref{itm:C3}, the associated {\it Lagrangian} can be defined by 
\begin{equation*}
    L(x,v,u) := \sup_{p\in T^*_x\mathcal{U}} \Big(\langle p,v\rangle - H(x,p,u)\Big), \qquad 
     (x,v,u)\in T\overline{U}\times\R.
\end{equation*}
In our setting, we can always strengthen assumption \ref{itm:C3} to a stronger condition on super-linearity (see Remark \ref{rmk:superlinear} for the reason). If so, the Lagrangian $L$ is finite. Besides, $L$ will possess the corresponding assumptions as $H$ has in subsection \ref{ss1}. \smallskip

As the generalization of conclusions in \cite{Ch, WYZ}, we first state the following conclusion for a fixed domain:
\begin{thm}
\label{thm:fix}
Let $\Omega$ satisfy \ref{itm:C0''} and $H$ satisfy \ref{itm:C1'}--\ref{itm:C6}. Let $\vartheta_\lambda \in \mathrm{C}(\overline{\Omega})$ be the unique solution to
\begin{equation}\label{eq:thm:fix-ge-equation}
\begin{cases}
    \begin{aligned}
    H\left(x,D  \vartheta_\lambda(x),\lambda\vartheta_\lambda(x)\right)\leq c(H),\quad x\in \Omega,\\
    H\left(x,D  \vartheta_\lambda(x),\lambda\vartheta_\lambda(x)\right)\geq c(H),\quad x\in \overline{\Omega}.
    \end{aligned}
\end{cases}
\end{equation}
Then $\vartheta_\lambda$ converges to a viscosity solution $u_0\in \mathrm{C}(\overline{\Omega})$ of \eqref{eq:hj-0}. Moreover, 
\begin{equation*}
    u_0=\sup\mathcal{E}
\end{equation*}
where 
\begin{equation*}
\begin{aligned}
    \cE& := &\left \{w\in \mathrm{C}(\Omega)\;\text{is a subsolution of \eqref{eq:hj-0}}\;\Big|\int_{T\ol\Om}\partial_u L(x,v,0)w(x)d\mu(x,v)\geq 0,\;\forall \mu\in\cM\right\}
\end{aligned}
\end{equation*}
and $\mathcal{M}$ is the set of Mather measures (defined in Definition \ref{defn:Mather-measures}).
\end{thm}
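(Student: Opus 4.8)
The plan is to mimic the standard vanishing-discount argument of Davini--Fathi--Iturriaga--Zavidovique, adapted to the contact (nonlinear in $u$) and state-constraint setting as in \cite{Ch,WYZ,Tu}. First I would establish a priori estimates: using \ref{itm:C1'}, \ref{itm:C3} (strengthened to superlinearity) and \ref{itm:C2}, one shows the family $\{\vartheta_\lambda\}_{\lambda>0}$ is uniformly bounded in $\mathrm{C}(\overline{\Omega})$ and uniformly Lipschitz (the state-constraint boundary inequality together with coercivity gives an equi-Lipschitz bound, while $H(x,p,u)-\kappa u$ nondecreasing plus $C_\lambda = c(H)$ pins down the sup-norm, as noted after \eqref{eq:hj-lb}). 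By Arzel\`a--Ascoli, every sequence $\lambda_j\to 0^+$ has a subsequence along which $\vartheta_{\lambda_j}\to u_*$ uniformly, and by standard stability of viscosity solutions $u_*$ solves the critical equation \eqref{eq:hj-0}. The whole content of the theorem is that this limit is independent of the subsequence and equals $\sup\mathcal{E}$; so it suffices to prove (i) every subsequential limit $u_*$ belongs to $\mathcal{E}$, and (ii) $u_*\geq w$ on $\overline{\Omega}$ for every $w\in\mathcal{E}$.

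For (i), I would expand $H(x,D\vartheta_\lambda,\lambda\vartheta_\lambda)$ using \ref{itm:C6}: the subsolution inequality reads $H(x,D\vartheta_\lambda,0)+\lambda\,\partial_u H(x,D\vartheta_\lambda,0)\,\vartheta_\lambda \leq c(H) + \lambda|\vartheta_\lambda|\,\varpi_3(|D\vartheta_\lambda|,\lambda|\vartheta_\lambda|)$ wherever differentiable, hence in the viscosity sense modulo an $o(\lambda)$ error (using the uniform Lipschitz bound to control $|D\vartheta_\lambda|$). Testing this against a Mather measure $\mu\in\mathcal{M}$ — i.e. integrating the Fenchel inequality $\langle p,v\rangle \leq H(x,p,0)+L(x,v,0)$ along the calibrated curves/measure, using $\int_{T\overline{\Omega}}(\langle D\vartheta_\lambda,v\rangle - L(x,v,0))\,d\mu = c(H)$ (the defining property of Mather measures for the critical equation) and the closedness/holonomy property $\int v\cdot D\phi\,d\mu=0$ — yields, after dividing by $\lambda$ and sending $\lambda\to 0^+$, exactly $\int_{T\overline{\Omega}}\partial_u L(x,v,0)\,u_*(x)\,d\mu(x,v)\geq 0$. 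Here I should be careful that $\partial_u H(x,p,0)$ and $\partial_u L(x,v,0)$ are related by the sign-reversing Legendre duality, so the inequality comes out with the right sign; this is where \ref{itm:C1'} (giving a definite sign, $\partial_u L \le -\kappa<0$ roughly, or at least a one-sided bound) is used. Combined with the fact that $u_*$ is itself a subsolution of \eqref{eq:hj-0}, we get $u_*\in\mathcal{E}$, so $u_*\leq \sup\mathcal{E}$.

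For (ii), the reverse inequality $u_*\geq \sup\mathcal{E}$, I would argue by a comparison-type estimate: fix $w\in\mathcal{E}$. Since $w$ is a subsolution of the critical equation and $\vartheta_\lambda$ solves the $\lambda$-equation with $C_\lambda = c(H)$, one wants to compare them. The trick (as in \cite{DFIZ,WYZ}) is to consider, for the point where $w-\vartheta_\lambda$ attains its maximum over $\overline{\Omega}$ — or better, to use the representation formula for $\vartheta_\lambda$ as an infimum over state-constrained curves of an action functional with running cost $L(x,v,\lambda\vartheta_\lambda)+c(H)$ — and to plug in a curve/measure that is nearly Mather-optimal. Evaluating along such a curve, the first-order expansion of $L$ in its $u$-argument produces the term $\lambda\int \partial_u L(x,v,0) w\,d\mu$, which is $\geq 0$ precisely because $w\in\mathcal{E}$; this forces $\vartheta_\lambda \geq w - o(1)$ (or $w-\lambda\cdot(\text{bounded})$) along the relevant points, and passing to the limit gives $u_*\geq w$. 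Making this rigorous requires the state-constraint calibration/representation results and the structure of Mather measures from the cited works, applied on the fixed domain $\Omega$.

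The main obstacle I anticipate is step (ii), and within it the interchange of limits: one must push a curve or measure that is optimal for the \emph{critical} problem into the \emph{$\lambda$-problem} and control the error uniformly, which is exactly where the nonlinear $u$-dependence (via \ref{itm:C6}) and the state-constraint (no escaping $\overline{\Omega}$, so one cannot freely perturb curves) interact badly; the resolution is to use the uniform Lipschitz bounds to make all the $\varpi_3$-type errors genuinely $o(\lambda)$ and to invoke the Mather-measure machinery of \cite{WYZ,Ch,Tu} rather than re-deriving it. A secondary technical point is ensuring the set $\mathcal{E}$ is nonempty and that $\sup\mathcal{E}$ is actually a subsolution (an upper envelope of subsolutions is a subsolution, and the linear constraint $\int \partial_u L\, w\,d\mu\ge 0$ is preserved under sup), so that the statement $u_0=\sup\mathcal{E}$ is not vacuous — but this follows from (i) since $u_*\in\mathcal{E}$ already exhibits a nonempty element, and then (i)+(ii) identify $u_* = \sup\mathcal{E}$ and in particular show the limit is subsequence-independent.
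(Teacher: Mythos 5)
Your step (i) is essentially the paper's argument: test the $\lambda$-equation against a Mather measure, use $\int_{T\ol\Om}(L(x,v,0)+c(H))\,d\mu=0$ to cancel the zeroth-order terms, divide by $\lambda$ and invoke \ref{itm:C6} to get $\int_{T\ol\Om}\partial_u L(x,v,0)\,u_*\,d\mu\geq 0$ (your phrasing of the "defining property" of Mather measures conflates holonomy with the minimization identity, and one needs the mollification of Lemma \ref{lem:smooth_smaller_domain} since $\vartheta_\lambda$ is only Lipschitz, but these are repairable). The a priori bounds and the reduction to (i)+(ii) also match the paper.

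Step (ii), however, has a genuine gap: the mechanism you describe runs in the wrong direction. Plugging a curve or measure that is (nearly) optimal for the \emph{critical} problem into the infimum representation of $\vartheta_\lambda$ can only produce an upper bound of the form $\vartheta_\lambda(x)\leq \vartheta_\lambda(\gamma(t))+\int(L+c(H))\,ds$, and moreover Mather-optimal curves live on the Mather set and do not reach a generic point $x\in\Omega$; neither this nor a comparison at the maximum point of $w-\vartheta_\lambda$ can yield the needed lower bound $u_*\geq w$ for every $w\in\mathcal{E}$. The paper's argument goes the other way: for each $x$ it takes the \emph{backward calibrated curve of the $\lambda$-problem} $\gamma^\lambda_x:(-\infty,0]\to\ol\Om$ from Proposition \ref{prop:opt-formula} (along which the representation is an equality), introduces the nonlinear discount weight $e^{-\lambda\alpha^\lambda_x(t)}$ with $\alpha^\lambda_x(t)=\int_0^t\partial_u L(\gamma^\lambda_x,\dot\gamma^\lambda_x,0)\,ds$ as in \eqref{eq:def_alpha(t)} (this replaces $e^{\lambda t}$ of the discounted case and is where \ref{itm:C1'} and Lemma \ref{lem:est_alpha(t)} enter), compares with the mollified subsolution $\widehat w_\delta$ along this curve via the Fenchel inequality, and forms the normalized occupation measures $\mu^\lambda_x$ of \eqref{eq:new_measures}. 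The crucial fact — Lemma \ref{lem:mat-mea} — is that weak-$*$ limits of these $\mu^\lambda_x$ are Mather measures, so the constraint $\int_{T\ol\Om}\partial_u L(x,v,0)\,w\,d\mu\geq 0$ defining $\mathcal{E}$ is applied to measures generated by the $\lambda$-dynamics, not to measures chosen a priori for the critical problem. Your proposal neither constructs these weighted occupation measures nor proves (or cites precisely) that their limits are Mather measures, and as stated the "push a critical-problem-optimal object into the $\lambda$-problem" step would fail; this is the missing core of the proof rather than a technical detail to be outsourced.
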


\begin{rmk}
Assumptions \ref{itm:C0''} and \ref{itm:C1'} are used to ensure the validity of the  comparison principle, which further guarantees the uniqueness of solution for \eqref{eq:hj-lb}. We emphasize that the state-constraint boundary creates additional technical difficulties compared to the case in \cite{Ch,WYZ}, so \ref{itm:C0''} is necessary. In this paper, we provide an independent proof of the comparison principle (see Appendix). We refer also to \cite{C-DL,S1} and \cite[Theorem 2.8.7]{tu_asymptotic_thesis} for different versions of the comparison principle under similar conditions as \ref{itm:C0''}.  
\end{rmk}

Next, we state our main result regarding equation \eqref{eq:hj-lb}: 



\begin{thm}
\label{thm:ge}
Suppose $\Omega$ satisfies \ref{itm:C0''}, $r(\lambda)$ satisfies \ref{itm:L}, the Hamiltonian $H$ satisfies \ref{itm:C1'}--\ref{itm:C7} and  $u_\lambda\in \mathrm{C}(\overline{\Omega}_\lambda)$ is the solution to \eqref{eq:hj-lb} with $C_\lambda \to c(H)$ as $\lambda \to 0^+$. If 
\begin{equation}\label{eq:assump_ratio}
    \lim_{\lambda \to 0^+} \left(\frac{C_\lambda - c(H)}{\lambda}\right) = \zeta\in\mathbb{R},
\end{equation}
then there is a solution $u^{\eta,\zeta}_0$ of \eqref{eq:hj-0} such that $u_\lambda \to u^{\eta,\zeta}_0$ locally uniformly on $\Omega$ as $\lambda \to 0^+$ and 
\begin{equation*}
    u^{\eta,\zeta}_0 = \sup \mathcal{E}^{\eta,\zeta}. 
\end{equation*}
with 
\begin{equation}\label{eq:mathcalE}
    \begin{aligned}
        \mathcal{E}^{\eta,\zeta}
        &:= \Big\lbrace \om\text{ is a continuous viscosity subsolution of \eqref{eq:hj-0}}\; \Big|\\
        &   \qquad\int_{T\overline{\Omega}}\om(x)\partial_u L(x,v,0)d\mu+\eta\int_{T\overline{\Omega}}\langle\partial_xL(x,v,0),x\rangle d\mu + \zeta \geq 0,\;\forall \mu\in\mathcal{M}\Big\rbrace.
    \end{aligned}
\end{equation}
Conversely, if $\lim_{\lambda\to 0^+}u_\lambda = u_0$ locally uniformly for some $u_0\in\mathrm{C}(\overline{\Omega})$ then \eqref{eq:assump_ratio} holds with 
\begin{equation}
    \label{eq:u0-zeta}
    \zeta=\max_{\mu\in\cM} \Big\{-\int_{T\overline{\Omega}} u_0(y)\partial_u L(y,v,0)d\mu-\eta\int_{T\overline{\Omega}}\langle\partial_xL(y,v,0),y\rangle  d\mu\Big\}.
\end{equation}
\end{thm}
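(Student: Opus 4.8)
The plan is to transfer \eqref{eq:hj-lb} to the fixed domain $\Omega$ by the dilation $y=(1+r(\lambda))x$: setting $v_\lambda(x):=u_\lambda\big((1+r(\lambda))x\big)$ for $x\in\overline{\Omega}$, this rescaled function solves, in the viscosity sense,
\[
H\Big((1+r(\lambda))x,\tfrac{1}{1+r(\lambda)}Dv_\lambda(x),\lambda v_\lambda(x)\Big)\le C_\lambda\text{ on }\Omega,\qquad\ge C_\lambda\text{ on }\overline{\Omega},
\]
and I would run the selection argument of Theorem~\ref{thm:fix} on this equation. The new feature compared with the fixed-domain case is that two extra first-order terms now enter: one from linearizing the dilation (controlled by \ref{itm:C7} together with $r(\lambda)/\lambda\to\eta$) and one from the shift of the critical constant (controlled by \ref{itm:C6} together with $(C_\lambda-c(H))/\lambda\to\zeta$).

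First I would establish compactness: using the superlinear strengthening of \ref{itm:C3} (Remark~\ref{rmk:superlinear}) together with \ref{itm:C1'}, \ref{itm:C5} and the state-constraint comparison principle (which rests on \ref{itm:C0''}), the family $\{v_\lambda\}$ is uniformly bounded and locally equi-Lipschitz on $\Omega$ — the choice $C_\lambda\to c(H)$ being exactly what supplies the $L^\infty$ bound with no renormalization — so along a subsequence $v_{\lambda_j}\to u_0$ locally uniformly, and by stability of viscosity sub/supersolutions $u_0$ solves \eqref{eq:hj-0}. Next comes the core estimate: for a fixed Mather measure $\mu\in\mathcal{M}$, writing $r(\lambda)=\lambda\eta+o(\lambda)$ and $C_\lambda=c(H)+\lambda\zeta+o(\lambda)$, I would expand the displayed equation to first order in $\lambda$ (using \ref{itm:C6} and \ref{itm:C7} to linearize in the $u$-slot and in the dilation), pass to Lagrangian form, integrate against $\mu$, and invoke the defining identities for Mather measures — closedness $\int_{T\overline{\Omega}}\langle Dw,v\rangle\,d\mu=0$ for critical subsolutions $w$, and $L$-minimality $\int_{T\overline{\Omega}}\big(L(x,v,0)+c(H)\big)\,d\mu=0$ which forces the zeroth-order integrand to vanish on $\operatorname{supp}\mu$ — so that only the first-order terms survive. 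This yields
\[
\int_{T\overline{\Omega}}u_0(x)\,\partial_u L(x,v,0)\,d\mu+\eta\int_{T\overline{\Omega}}\langle\partial_x L(x,v,0),x\rangle\,d\mu+\zeta\ \ge\ 0 ,
\]
and since $u_0$ is moreover a subsolution of \eqref{eq:hj-0}, this means $u_0\in\mathcal{E}^{\eta,\zeta}$, hence $u_0\le\sup\mathcal{E}^{\eta,\zeta}$.

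For the reverse inequality I would take any $w\in\mathcal{E}^{\eta,\zeta}$ and build the perturbation $w_\lambda:=\big(1-\delta(\lambda)\big)\,w\big((1+r(\lambda))^{-1}\cdot\big)+\lambda\psi$ on $\overline{\Omega}_\lambda$, with a fixed bounded corrector $\psi$ and a rate $\delta(\lambda)\to0^+$, chosen so that the integral inequality defining $\mathcal{E}^{\eta,\zeta}$ is precisely the condition making $w_\lambda$ a viscosity subsolution of \eqref{eq:hj-lb} up to an $o(\lambda)$ error; the comparison principle then gives $w_\lambda\le u_\lambda$ on $\overline{\Omega}_\lambda$, and letting $\lambda\to0^+$ gives $w\le u_0$. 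Taking the supremum over $w$ and combining with the previous step gives $u_0=\sup\mathcal{E}^{\eta,\zeta}$; as this limit does not depend on the extracted subsequence, the whole family converges to $u^{\eta,\zeta}_0:=\sup\mathcal{E}^{\eta,\zeta}$. For the converse statement, the core estimate holds for \emph{every} $\mu\in\mathcal{M}$, giving $\liminf_{\lambda\to0^+}(C_\lambda-c(H))/\lambda\ \ge\ \max_{\mu\in\mathcal{M}}\big\{-\int_{T\overline{\Omega}}u_0\,\partial_u L\,d\mu-\eta\int_{T\overline{\Omega}}\langle\partial_x L,x\rangle\,d\mu\big\}$; running the same expansion on the \emph{supersolution} side of \eqref{eq:hj-lb} and testing it against a Mather measure obtained as a weak-$*$ limit of normalized occupation measures of calibrated curves for $u_\lambda$ — which by construction realizes that maximum — produces the matching upper bound for $\limsup$, so the ratio converges to the stated $\zeta$.

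I expect the main obstacle to be the corrector construction in the third step: one must choose $\psi$ and the rate $\delta(\lambda)$ so that the perturbed subsolution test survives \emph{uniformly up to the state-constraint boundary}, where the star-shapedness hypothesis \ref{itm:C0''}, the dilation of $\Omega$, and the nonlinear $u$-dependence all interact; this is the nonlinear, state-constrained analogue of the key comparison lemma behind \cite{DFIZ, WYZ}. A secondary difficulty is making the first-order expansion rigorous although $v_\lambda$ is only Lipschitz, which forces one to argue through the measure $\mu$ and the calibration/semiconcavity structure of its support (or through a Herglotz-type variational representation of $v_\lambda$) rather than through a pointwise Taylor expansion.
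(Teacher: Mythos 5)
Your first two steps (rescaling to the fixed domain, compactness via Proposition \ref{prop:bounded}, and the inequality
$\int u_0\,\partial_u L\,d\mu+\eta\int\langle\partial_x L,x\rangle\,d\mu+\zeta\ge 0$ obtained by testing the sub- and supersolution properties against a Mather measure and linearizing with \ref{itm:C6}--\ref{itm:C7}) match the paper's argument, and your sketch of the converse (testing the supersolution side against a limit of weighted occupation measures of calibrated curves) is also the route the paper takes. The genuine gap is in your third step, the maximality $u_0\ge w$ for every $w\in\mathcal{E}^{\eta,\zeta}$. You propose to build a perturbation $w_\lambda=(1-\delta(\lambda))\,w((1+r(\lambda))^{-1}\cdot)+\lambda\psi$ that is a subsolution of \eqref{eq:hj-lb} up to $o(\lambda)$ and then invoke comparison. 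This cannot work as stated: membership in $\mathcal{E}^{\eta,\zeta}$ is an \emph{integral} condition against Mather measures, which constrains $w$ only on the projected Mather set, whereas the comparison principle requires $w_\lambda$ to be an approximate subsolution \emph{pointwise on all of} $\Omega_\lambda$. Since $u\mapsto H(x,p,u)$ is increasing, at any point where $w>0$ one has $H(x,Dw,\lambda w)\ge H(x,Dw,0)$, i.e.\ the inequality goes the wrong way, and no fixed bounded corrector $\psi$ can repair this at $O(\lambda)$ accuracy off the Mather set, where the defining inequality of $\mathcal{E}^{\eta,\zeta}$ gives no information. (Adding a large negative constant does make $w-K$ a subsolution of the $\lambda$-problem, but only yields $u_\lambda\ge w-K$, which is useless in the limit.)

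The paper's proof of this step avoids comparison entirely: it uses the variational representation of $u_\lambda$ (Proposition \ref{prop:opt-formula}) to produce a calibrated curve $\gamma_x^\lambda$ ending at $x$, introduces the weight $e^{-\lambda\alpha_x^\lambda(t)}$ with $\alpha_x^\lambda(t)=\int_0^t\partial_u L(\gamma_x^\lambda,\dot\gamma_x^\lambda,0)\,ds$, integrates the inequality between $w$ and $u_\lambda$ along the curve, and rewrites the resulting error terms as integrals against the normalized occupation measures $\widetilde\mu_x^\lambda$. The decisive point is Lemma \ref{lem:wt_mu-lambda-x-new}: every weak-$*$ limit of $\widetilde\mu_x^\lambda$ is a Mather measure, so in the limit the correction term is controlled exactly by the integral inequality defining $\mathcal{E}^{\eta,\zeta}$ (together with $\partial_u L<0$), giving $u_0(x)\ge w(x)$. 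In other words, the integral condition enters only after one knows the limiting measure lies in $\mathcal{M}$ — it is never upgraded to a pointwise subsolution statement. You already use precisely this machinery for the converse direction, so the fix is to run it for the maximality step as well rather than attempting a corrector-plus-comparison argument.
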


\begin{rmk}\label{rmk:case-c(lambda)} \quad 
\begin{itemize}
\item[(i)] An example was proposed in \cite[Theorem 1.8]{Tu}, which indicates that the solution $u_\lb$ of \eqref{eq:hj-lb} could diverge as $\lb\rightarrow 0^+$ if $\eta=-\infty$. Therefore, \ref{itm:L} is a necessary condition in the convergence of $u_\lb$.

\item[(ii)] Theorem \ref{thm:ge} covers two important cases: $C_\lambda = c(H)$ and $C_\lambda = c(\lambda)$ for $\lambda>0$, where $c(\lambda)$ is the ergodic constant of $H$ on $\Omega_\lambda$, which can be similarly given as in \eqref{eq:mane-cri} by:
    \begin{equation}\label{eq:mane-cri-lambda}
        c(\lambda) = \inf \left\lbrace c\in \R: H(x,Du(x),0) \leq c\;\text{has a solution in}\;\Omega_\lambda \right\rbrace .
    \end{equation}
    For discounted Hamiltonians, they correspond to two normalization $u_\lambda + \lambda^{-1}c(H)$ and $u_\lambda + \lambda^{-1}c(\lambda)$, where the convergence have been discussed in \cite{Tu}. 

\item[(iii)] If $C_\lambda = c(H)$ then \eqref{eq:assump_ratio} holds and $\xi = 0$, so the associated solution $u_\lambda \in \mathrm{C}(\overline{\Omega}_\lambda,\R)$ satisfies $\lim_{\lambda\to 0^+} u_\lambda = u^{\eta,0}_0 = \sup \mathcal{E}^{\eta, 0}$, with $\mathcal{E}^{\eta,0}=\cE^{\eta,\zeta}\big|_{\zeta=0}$  given by \eqref{eq:mathcalE}.

\item[(iv)] If $C_\lambda = c(\lambda)$ then $c(\lambda)\to c(H)$ as $\lambda \to 0^+$. If $\eta = 0$ then $\zeta$ has to be $0$ as well, since
\begin{equation}\label{eq:zeta}
    \frac{c(\lambda)-c(H)}{\lambda}  =  \left(\frac{r(\lambda)}{\lambda} \right)\left(\frac{c(\lambda)-c(H)}{r(\lambda)} \right)
\end{equation}
and (ii) of Proposition \ref{prop:aux-conv}. Consequently, $u_0^{\eta,\zeta}\equiv u_0$ as given in Theorem \ref{thm:fix}.

\item[(v)] If $C_\lb=c(\lb)$ and $\eta \neq 0$ then the existence of $\zeta\in\R$ in \eqref{eq:assump_ratio} is equivalent to the existence of 
\begin{equation}\label{eq:limit-r-lambda}
    \lim_{\lb\rightarrow 0^+}\frac{c(\lb)-c(H)}{r(\lb)}=\varsigma\in\R
\end{equation}
due to \eqref{eq:zeta}. A sufficient condition to guarantee that $\varsigma$ exists is 
\begin{equation}\label{eq:v-sig}
    \int_{T\ol\Om}\langle\partial_xL(x,v,0),x\rangle d\mu\equiv-\varsigma,\quad\forall \mu\in\cM, \tag{$\star$}
\end{equation}
which was firstly pointed out in \cite[Theorem 1.4]{Tu}. We note that $\varsigma$ is actually independent of the choice of $r(\lambda)$ and 
is predetermined by $L$ and $\mathcal{M}$. 
Moreover, once $\varsigma$ exists, $\zeta=\eta\cdot\varsigma$ can be deduced from \eqref{eq:zeta}. We refer the Remark \ref{rmk:gene-case} for 
a further discussion on \eqref{eq:limit-r-lambda}.

\item[(vi)] For $C_\lambda=c(\lambda)$, once $\zeta\in\R$ exists in \eqref{eq:assump_ratio} and if \eqref{eq:v-sig} holds, then using them in \eqref{eq:mathcalE} we can conclude that
$\mathcal{E}^{\eta,\zeta} \equiv \mathcal{E}$ as given in Theorem \ref{thm:fix}. Such a coercive consequence is new in the literature, and coincides with the previous 
results in \cite{WYZ}.

\item[(vii)] Inspired by \eqref{eq:u0-zeta}, 
we can deduce an alternative criterion for the existence of $\zeta$, as a generalization of \eqref{eq:v-sig} (see subsection \ref{ss} for more details).
\end{itemize}
\end{rmk}
As a matter of fact, if we strengthen assumptions \ref{itm:C4} and \ref{itm:C5} to the following:
\begin{description}[style=multiline, labelwidth=1.1cm, leftmargin=2cm]
    \item[\namedlabel{itm:C45'}{$(\mathcal{C}_{45}')$}] For each $R>0$, there exists a constant $C_R>0$ such that 
    \begin{equation*}
        \left\{
            \begin{aligned}
            |H(x,p, u)-H(y,p,u)|\leq C_R|x-y|,\\
            |H(x,p, u)-H(x,q,u)|\leq C_R|p-q|,
            \end{aligned}
        \right.
    \end{equation*}
for any $x,y\in\ol\cU$, $u\in\R$ and $p,q\in\R^n$ with $|u|,|p|,|q|\leq R$,
\end{description}
then the convergence of $u_\lambda$ in Theorem \ref{thm:ge} can be quickly proved by using merely the comparison principle, without resorting to \ref{itm:C6} and \ref{itm:C7}:

\begin{cor}[Easier convergence with $\eta=0$]\label{cor:eta=0} Let $\Omega$ satisfy \ref{itm:C0''}, $H$ satisfy \ref{itm:C1'}--\ref{itm:C3} and \ref{itm:C45'}, $r(\lambda)$ satisfy \ref{itm:L} with $\eta = 0$ and $\zeta = 0$ in \eqref{eq:assump_ratio}. Then the solution $u_\lambda \in \mathrm{C}(\overline{\Omega}_\lambda)$ of \eqref{eq:hj-lb} converges to $u_0$, the limiting solution as given in Theorem \ref{thm:fix}. 
\end{cor}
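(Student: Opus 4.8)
The plan is to transplant everything onto the fixed domain $\Omega$ by rescaling, and then to sandwich the rescaled solution between $\vartheta_\lambda\pm o(1)$ using only the comparison principle for \eqref{eq:thm:fix-ge-equation}; since Theorem~\ref{thm:fix} already supplies $\vartheta_\lambda\to u_0=\sup\mathcal E$, this yields $u_\lambda\to u_0$ while bypassing the Mather-measure analysis.

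\emph{A priori bounds and rescaling.} The choice $C_\lambda\to c(H)$ makes $\{u_\lambda\}$ uniformly bounded, say $\|u_\lambda\|_\infty\le M$ for all small $\lambda$; since then $|\lambda u_\lambda|\le M$ and $\{C_\lambda\}$ is bounded, coercivity \ref{itm:C3} together with convexity \ref{itm:C2} yields a uniform Lipschitz bound $\mathrm{Lip}(u_\lambda)\le R_0$, and the same holds for $\vartheta_\lambda$. Put $R:=\max\{M,2R_0\}$ and let $C_R$ be the constant from \ref{itm:C45'}. I would then set $v_\lambda(x):=u_\lambda\big((1+r(\lambda))x\big)$ for $x\in\overline\Omega$; then $\|v_\lambda\|_\infty\le M$, $\mathrm{Lip}(v_\lambda)\le 2R_0$ for small $\lambda$, and $v_\lambda$ is, in the state-constraint sense on $\overline\Omega$, a solution of $G_\lambda(x,Dv,v)=C_\lambda$, where
\[
G_\lambda(x,p,w):=H\big((1+r(\lambda))x,\ (1+r(\lambda))^{-1}p,\ \lambda w\big).
\]
By \ref{itm:C45'}, $|G_\lambda(x,p,w)-H(x,p,\lambda w)|\le \delta_\lambda:=C_R(\mathrm{diam}\,\Omega+R)\,|r(\lambda)|$ whenever $|p|\le R$ and $|\lambda w|\le M$.

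\emph{Comparison.} Set $\epsilon_\lambda:=|C_\lambda-c(H)|+\delta_\lambda$. Because $\eta=0$ and $\zeta=0$ force $r(\lambda)=o(\lambda)$ and $C_\lambda-c(H)=o(\lambda)$, we get $\epsilon_\lambda/\lambda\to 0$; put $a_\lambda:=\epsilon_\lambda/(\kappa\lambda)$, so $a_\lambda\to 0$. Using \ref{itm:C1'} with $\lambda>0$ (so $w\mapsto H(x,p,\lambda w)-\kappa\lambda w$ is nondecreasing) to make a vertical shift by $a_\lambda$, and observing that the estimate of the previous paragraph lets one transfer a \emph{subsolution} property between $G_\lambda$ and the Hamiltonian $H(x,p,\lambda w)$ of \eqref{eq:thm:fix-ge-equation} \emph{at interior test points only} — where coercivity together with the bounded right-hand sides keeps every test-function gradient $\le R$ — one checks that $v_\lambda-a_\lambda$ is a subsolution of $H(x,Du,\lambda u)\le c(H)$ in $\Omega$ while $\vartheta_\lambda$ is a state-constraint supersolution of the same on $\overline\Omega$; symmetrically, $\vartheta_\lambda$ is a subsolution of $G_\lambda(x,Du,u)\le c(H)+\delta_\lambda$ in $\Omega$ while $v_\lambda+a_\lambda$ is a state-constraint supersolution of $G_\lambda(x,Du,u)\ge c(H)+\delta_\lambda$ on $\overline\Omega$. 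The comparison principles for these two state-constraint problems (valid under \ref{itm:C0''}, \ref{itm:C1'}, and \ref{itm:C45'}, which implies \ref{itm:C4}--\ref{itm:C5}) then yield
\[
\big|v_\lambda-\vartheta_\lambda\big|\le a_\lambda\qquad\text{on }\overline\Omega .
\]

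\emph{Passage to the limit.} By Theorem~\ref{thm:fix} together with the equi-Lipschitz bound, $\vartheta_\lambda\to u_0$ uniformly on $\overline\Omega$, hence $v_\lambda\to u_0$ uniformly on $\overline\Omega$; since $|u_\lambda(y)-v_\lambda(y)|=|u_\lambda(y)-u_\lambda((1+r(\lambda))y)|\le R_0|r(\lambda)|\,|y|$ on compact subsets of $\Omega$, undoing the rescaling gives $u_\lambda\to u_0$ locally uniformly on $\Omega$, with $u_0=\sup\mathcal E=\sup\mathcal E^{0,0}$. I expect two points to require care. The first is purely quantitative: both perturbations — the domain perturbation $\delta_\lambda=O(|r(\lambda)|)$ and the constant perturbation $|C_\lambda-c(H)|$ — get amplified by $1/\lambda$ under the monotonicity shift, so one needs them to be $o(\lambda)$; this is exactly what $\eta=0$, $\zeta=0$ provide, and for nonzero $\eta$ or $\zeta$ the shift is only $O(1)$, the limit is no longer $u_0$, and the Mather-measure description of Theorems~\ref{thm:fix}--\ref{thm:ge} becomes unavoidable. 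The second is that one must organize the argument so that only the (interior) subsolution property ever gets transferred between the two slightly different Hamiltonians, leaving the possibly large gradients at the state-constraint boundary entirely inside the two comparison principles, which are designed to handle them via \ref{itm:C0''}.
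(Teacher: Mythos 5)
Your proof is correct and is essentially the paper's own argument: rescale onto the fixed domain, use \ref{itm:C45'} to convert the $O(|r(\lambda)|)$ domain perturbation into a perturbation of the constant, absorb it together with $|C_\lambda-c(H)|$ by an $o(1)$ vertical shift via \ref{itm:C1'} (possible precisely because $\eta=\zeta=0$), sandwich against $\vartheta_\lambda$ with the comparison principle, and then invoke Theorem \ref{thm:fix}; your two-comparison bookkeeping (transferring only interior subsolution properties between the nearby Hamiltonians) is a harmless variant of the paper's single comparison with the shifted functions $v^{\pm}_\lambda$. One small correction: the uniform bound $\Vert u_\lambda\Vert_{L^\infty}\le M$ does not follow from $C_\lambda\to c(H)$ alone but from $|C_\lambda-c(H)|=O(\lambda)$ as in Proposition \ref{prop:bounded}, which your hypothesis $\zeta=0$ (together with $\eta=0$ and Proposition \ref{prop:aux-conv}) does supply.
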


As a necessary complement, we can generalize previous conclusions to the case where $\lim_{\lb\rightarrow 0^+}C_\lb=c\in\R$ (it may happen that $c\neq c(H)$). 

\begin{thm}\label{thm:change0-domain-c-ener}
Suppose $\Omega$ satisfies \ref{itm:C0''}, $r(\lambda)$ satisfies \ref{itm:L} and the Hamiltonian $H$ satisfies \ref{itm:C1'}--\ref{itm:C7}. For $C_\lambda\in \mathbb{R}$ such that $C_\lambda\to c$ as $\lambda\to 0^+$, let $v_\lambda\in \mathrm{C}(\overline{\Omega}_\lambda)$ be the solution to \eqref{eq:hj-lb} with $C_\lambda \to c$ as $\lambda \to 0^+$. Then $\lambda v_\lambda(\cdot)\rightarrow h_0(c)$ as $\lambda\rightarrow 0^+$ for certain constant $h_0(c)\in\R$. Furthermore, we have 
\begin{equation*}
    \lim_{\lambda\to 0^+}\left(v_\lambda(\cdot) - \frac{h_0(c)}{\lambda} \right) = v_c^{\eta,\zeta}(\cdot) \;\text{exists} \qquad\text{iff}\qquad \lim_{\lambda \to 0^+} \left(\frac{C_\lambda-c}{\lambda}\right) = \zeta\in \mathbb{R}\;\text{exists}.
\end{equation*}
The convergence is locally uniformly on $\Omega$. In such a case, we have $v_c^{\eta,\zeta}=\sup\cE_c^{\eta,\zeta}$ with 
\begin{equation*}
    \begin{aligned}
    \cE_c^{\eta,\zeta}
    &:=\Big\{w\in \mathrm{C}(\Omega);H(x, Dw,h_0(c))\leq c\;\text{in}\;\Omega\;\text{in the sense of viscosity}, \\
    & \qquad\quad   \int_{T\ol\Om}w(x)\partial_u L(x,v,h_0(c))d\mu+\eta\int_{T\ol\Om}\langle\partial_xL(x,v,h_0(c)),x\rangle d\mu + \zeta \geq 0,\;\forall \mu\in\cM_c\Big\}    
    \end{aligned}
\end{equation*}
and $\mathcal{M}_c$ is the set of Mather measures associated with Hamiltonian $H(\cdot,\cdot, h_0(c)): T^*\Omega\rightarrow\mathbb{R}$. 
\end{thm}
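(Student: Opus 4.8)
The plan is to reduce Theorem \ref{thm:change0-domain-c-ener} to the already established Theorem \ref{thm:ge} by two successive normalizations of $v_\lambda$: first peeling off the leading-order term $h_0(c)/\lambda$, then translating the $u$-slot of $H$ so that the limiting level $c$ becomes an ergodic constant. For the first normalization, let $c(\Omega,u)$ denote the ergodic constant of $H(\cdot,\cdot,u)$ on $\Omega$, defined as in \eqref{eq:mane-cri} (so $c(\Omega,0)=c(H)$). By \ref{itm:C1'}, $c(\Omega,u_2)-c(\Omega,u_1)\geq\kappa(u_2-u_1)$ for $u_1\leq u_2$, and, together with the continuity and coercivity of $H$, the map $u\mapsto c(\Omega,u)$ is a strictly increasing homeomorphism of $\mathbb{R}$; I let $h_0(c)$ be the unique value with $c(\Omega,h_0(c))=c$. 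First I record a priori bounds: for constants $M_\pm$ with $\lambda M_\pm$ of appropriately large absolute value, \ref{itm:C1'} and \ref{itm:C3} make $x\mapsto M_+$ a state-constraint supersolution and $x\mapsto M_-$ a subsolution of \eqref{eq:hj-lb} for all small $\lambda$, so the comparison principle (valid by \ref{itm:C0''} and \ref{itm:C1'}) gives $\sup_\lambda\|\lambda v_\lambda\|_{L^\infty(\overline{\Omega}_\lambda)}<\infty$. The subsolution inequality in \eqref{eq:hj-lb}, the coercivity \ref{itm:C3}, and the fact that the $\Omega_\lambda$ are star-shaped with constants uniform in $\lambda$ then give a Lipschitz bound on $v_\lambda$ independent of $\lambda$; hence $\lambda v_\lambda$ is uniformly bounded and equi-Lipschitz with Lipschitz constants tending to $0$, so by Arzel\`a--Ascoli every subsequential limit of $\lambda v_\lambda$ is a constant $h$.

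\textbf{Identification and reduction.} To pin down $h$: since $\lambda v_\lambda\to h$ uniformly, $C_\lambda\to c$, and $|Dv_\lambda|$ is uniformly bounded, the (uniform, on the relevant compact set) continuity of $H$ lets us freeze the $u$-slot, showing $v_\lambda$ to be a viscosity sub/supersolution of the state-constraint problem $H(x,Dw,h)=C_\lambda+o(1)$ on $\Omega_\lambda$; comparing with sub/supersolutions at the ergodic level $c(\Omega_\lambda,h)$ and passing to the limit (using the continuity of the ergodic constant under $\Omega\mapsto\Omega_\lambda$, cf. Proposition \ref{prop:aux-conv}) forces $c=c(\Omega,h)$, so by strict monotonicity $h=h_0(c)$. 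As all subsequential limits agree, $\lambda v_\lambda\to h_0(c)$ uniformly, which is the first assertion of the theorem. Now set $w_\lambda:=v_\lambda-h_0(c)/\lambda$ and $\widetilde H(x,p,u):=H\big(x,p,u+h_0(c)\big)$. Then $w_\lambda\in\mathrm{C}(\overline{\Omega}_\lambda)$ is the solution of \eqref{eq:hj-lb} with $H$ replaced by $\widetilde H$ and the same $\Omega_\lambda,C_\lambda$; the ergodic constant of $\widetilde H(\cdot,\cdot,0)=H(\cdot,\cdot,h_0(c))$ on $\Omega$ equals $c$ by the choice of $h_0(c)$; and $C_\lambda\to c$. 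One verifies that $\widetilde H$ satisfies the hypotheses of Theorem \ref{thm:ge} --- note the differentiability conditions \ref{itm:C6} and \ref{itm:C7} are here invoked at the base point $h_0(c)$ in place of $0$. Applying Theorem \ref{thm:ge} to $\widetilde H$: $w_\lambda=v_\lambda-h_0(c)/\lambda$ converges locally uniformly on $\Omega$ as $\lambda\to 0^+$ if and only if $\lim_{\lambda\to 0^+}(C_\lambda-c)/\lambda=\zeta\in\mathbb{R}$ exists, and then the limit equals $\sup\widetilde{\mathcal E}^{\eta,\zeta}$, with $\widetilde{\mathcal E}^{\eta,\zeta}$ the set \eqref{eq:mathcalE} written for $\widetilde H$.

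\textbf{Translation back, and the main obstacle.} The Lagrangian $\widetilde L$ of $\widetilde H$ satisfies $\widetilde L(x,v,u)=L(x,v,u+h_0(c))$, whence $\partial_u\widetilde L(x,v,0)=\partial_u L(x,v,h_0(c))$ and $\partial_x\widetilde L(x,v,0)=\partial_x L(x,v,h_0(c))$; the critical equation $\widetilde H(x,Dw,0)=c$ is exactly $H(x,Dw,h_0(c))=c$; and the Mather measures of $\widetilde H(\cdot,\cdot,0)=H(\cdot,\cdot,h_0(c))$ are precisely $\mathcal{M}_c$. Therefore $\widetilde{\mathcal E}^{\eta,\zeta}=\mathcal E_c^{\eta,\zeta}$ and $v_c^{\eta,\zeta}=\sup\mathcal E_c^{\eta,\zeta}$, while the converse direction and the formula for $\zeta$ (the analogue of \eqref{eq:u0-zeta}) come from the converse half of Theorem \ref{thm:ge}. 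The delicate part of the whole argument is the identification $\lambda v_\lambda\to h_0(c)$: the a priori bounds are routine, but making the ``freezing'' step rigorous --- that $v_\lambda$ is, up to $o(1)$, a genuine critical sub/supersolution at the level $h_0(c)$ --- and combining the strict monotonicity of $u\mapsto c(\Omega,u)$ with the continuity of the ergodic constant under $\Omega\mapsto\Omega_\lambda$ so as to exclude every other subsequential limit requires care; a secondary point is to check that $\widetilde H$ genuinely satisfies the assumptions of Theorem \ref{thm:ge}, especially that \ref{itm:C6} and \ref{itm:C7} persist at the base point $h_0(c)$.
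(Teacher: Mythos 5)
Your proposal is correct and follows essentially the same route as the paper: the paper also reduces to the $c(H)$-level theory by passing to $v_\lambda - h_0(c)/\lambda$, which solves the state-constraint problem for the shifted Hamiltonian $H(x,p,u+h_0(c))$ whose ergodic constant at $u=0$ is $c$ (this is the content of Lemma \ref{lem:admis} and Theorem \ref{thm:c-ener-fix}, after which the paper declares the variable-domain case to follow ``similarly to the procedure of Theorem \ref{thm:ge}''). The one place where you genuinely diverge is the preliminary identification $\lambda v_\lambda \to h_0(c)$: you obtain it via constant barriers, Arzel\`a--Ascoli, and a freezing/uniqueness-of-the-ergodic-constant argument, whereas the paper compares $v_\lambda$ directly with the explicit barriers $u_0 \pm \Vert u_0\Vert_{L^\infty} + h_0(c)/\lambda$ built from a solution $u_0$ of the critical equation $H(x,Du,h_0(c)) = c$. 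The paper's barriers have the advantage of delivering the uniform bound on $v_\lambda - h_0(c)/\lambda$ in the same stroke; your route is more robust when $(C_\lambda - c)/\lambda$ is unbounded (where that uniform bound genuinely fails), but it leans on the standard, here-unproved fact that a state-constraint supersolution at level $e$ and a subsolution at level $e' < e$ cannot coexist (a $\kappa$-free variant of the doubling argument in Theorem \ref{thm:comp-p}). Your observation that \ref{itm:C6} and \ref{itm:C7} must be read at the base point $h_0(c)$ rather than $0$ is accurate, and the paper's definition of $\mathcal{E}_c^{\eta,\zeta}$ implicitly assumes the same.
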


\begin{rmk}\quad 
\begin{itemize}

    \item[$\mathrm{(i)}$] Throughout all above Theorems and Corollaries, \ref{itm:C1'} is assumed for using the comparison principle (see Theorem \ref{thm:comp-p}). Due to the recent work \cite{CFZZ}, it is possible to weaken \ref{itm:C1'} to the following
    \smallskip
    \begin{description}[style=multiline, labelwidth=1.1cm, leftmargin=2cm]
        \item[\namedlabel{itm:C1}{$(\mathcal{C}_1')$}] $H(x,p,u)$ is strictly increasing in $u$ for any $(x,p)\in T^*\overline{\mathcal{U}}$.
    \end{description} \smallskip
    or even weaker, but that relies on a more complicated trajectory analysis from the weak KAM theory.  For the consistency and concision of the current article, we only point out this fact. 
    
    \item[$\mathrm{(ii)}$] While we primarily focus on $\Omega_\lambda$ as an affine deformation of $\Omega$ in our main conclusions, we believe our proof ideas can naturally extend to more general $\Omega_\lambda$ cases, as suggested in \cite[Section VII]{C-DL}. In the literature, the general perturbation is often $\Omega_\lambda = \Phi_\lambda(\Omega)$, where $\Phi_\lambda(x) = x + \lambda\mathbf{V}(x)$ with $\mathbf{V}$ a smooth vector field on $\mathbb{R}^n$. Our paper deals with $\mathbf{V} = \mathbf{Id}$, leaving the treatment of the general case for future work.

    \smallskip
    
    \item[$\mathrm{(iii)}$] A subsequent issue following the above Theorems and Corollaries is the estimation of the convergence rate of the solutions as $\lambda$ approaches $0^+$:

    \begin{que}\label{que:rate}
    For the equation 
    \begin{equation*}
        \begin{cases}
            \begin{aligned}
                H(x,D  \vartheta_\lambda(x), \lambda\vartheta_\lambda(x))\leq c(H),\quad x\in \Omega_\lambda,\\
                H(x,D  \vartheta_\lambda(x), \lambda\vartheta_\lambda(x))\geq c(H),\quad x\in \overline{\Omega}_\lambda,
            \end{aligned}
        \end{cases}
    \end{equation*}
with $\Om_\lb\rightarrow\Om$ as $\lb\rightarrow 0^+$, in what rate with respect to $|r(\lb)|$ does the associated viscosity solution $\vartheta_\lb$ converges to $\vartheta_0^\eta$ as given in Theorem \ref{thm:ge}?
\end{que}
In \cite{KTT}, the convergence rate of solutions for discounted H-J equations on nested domains was estimated, but it requires a non-vanishing discount, which is not assured in Open Problem \ref{que:rate} due to $\lambda\vartheta_\lambda \rightarrow 0$. For a pendulum system, the convergence rate of solutions as the discount vanishes has been established in \cite{mitake_weak_2018}, offering insights into understanding Open Problem \ref{que:rate}.

\item[$\mathrm{(iv)}$] For general boundary value problems, the vanishing discount limit has been studied in \cite{IMT2}, where they used a duality representation for the Mather measures. As far as we know, that is the only representation of the Mather measures in the boundary value problems. The compatibility of minimal curves with the boundary restrictions (Neumann or Dirichlet type) brings more difficulties in defining Mather measures by using the time-average along the minimal curves, since mostly minimal curves exist only for finite time. Therefore, it is meaningful but needs more effort to generalize the definition of Mather measures to contact H-J equations with nonlinear $u-$dependence, under wider boundary conditions. 

\end{itemize}
\end{rmk}

\subsection{Alternative criterion for \eqref{eq:assump_ratio}} \label{ss} For generalized $C_\lb$ converging to $c(H)$ as $\lb\rightarrow 0_+$,  the associated \eqref{eq:assump_ratio} may not exist, therefore, we try to find a criterion for \eqref{eq:assump_ratio} just like we did in \eqref{eq:v-sig} for the special case $C_\lb=c(\lb)$. Such a criterion doesn't involve any prior assumption on $\lim_{\lb\rightarrow 0^+}u_\lb$, therefore should be more essential than \eqref{eq:u0-zeta}. As we have shown above, for any $c\in\R$, there exists a unique number  $h_\lambda(c)$ such that  the ergodic constant of the Hamiltonian $H\left(\cdot,\cdot, h_\lambda(c)\right)$ on $\Omega_\lambda$ is $c$. Due to \ref{itm:C1'}, \ref{itm:C2} and \ref{itm:C3}, we can see that $h_\lb(c)$ is continuous and strictly increasing in $c$, and $\lim_{\lb\rightarrow 0^+}h_\lb(c)=h_0(c)$ (see Section \ref{sec:4-generalized-conv-c} for the proof). In fact, the inverse function $h_\lb^{-1}: a\in\R\rightarrow\R$ is exactly the ergodic constant of $H(\cdot,\cdot, a)$ on $\Om_\lb$. With these preparations, we can give a different criterion for $\zeta$ in \eqref{eq:assump_ratio}:
\begin{thm}\label{thm:characterization-zeta} 
Let $\Omega$ satisfy \ref{itm:C0''}, $r(\lambda)$ satisfy \ref{itm:L} and the Hamiltonian $H$ satisfy \ref{itm:C1'}--\ref{itm:C7}. If $C_\lambda \to c(H)$ as $\lambda\to 0^+$ then there exists $\nu\in \mathcal{M}$ such that 
\begin{equation*}
    \lim_{\lambda\to 0^+}\frac{1}{\lambda}
    \left( 
    C_\lambda - c(H) + h_\lambda(C_\lambda) \int_{T\overline{\Omega}} \partial_u L(x,v,0)\;d\nu
    \right) = \eta \int_{T\overline{\Omega}} \langle \partial_x L(x,v,0),-x\rangle\;d\nu.
\end{equation*}
Consequently, $\zeta = \lim_{\lambda\to 0^+}(C_\lambda-c(H))/\lambda$ exists if and only if $\rho = \lim_{\lambda \to 0^+} h_{\lambda}(C_\lambda)/\lambda$ exists. Once any one of $\zeta$ and $\rho$ exists, the other one can be identified by the following relation:
\begin{equation}\label{eq:characterization-zeta}
    \zeta = \max_{\mu\in \mathcal{M}} \left\lbrace -\rho\int_{T\overline{\Omega}} \partial_u L(x,v,0)\;d\mu -  \eta \int_{T\overline{\Omega}} \partial_x \langle L(x,v,0), x\rangle\;d\mu \right\rbrace.
\end{equation}
\end{thm}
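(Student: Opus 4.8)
The plan is to establish the single limit identity in the statement directly, and then deduce the equivalence of the two limits $\zeta$ and $\rho$ as a formal consequence, together with the max-formula \eqref{eq:characterization-zeta}. The starting point is the observation that, by the definition of $h_\lambda$, the ergodic constant of $H(\cdot,\cdot,h_\lambda(C_\lambda))$ on $\Omega_\lambda$ is exactly $C_\lambda$; equivalently, the solution $u_\lambda$ of \eqref{eq:hj-lb} (with right-hand side $C_\lambda$ and argument $\lambda u_\lambda$) and the critical solution $w_\lambda$ of $H(x,Dw_\lambda,h_\lambda(C_\lambda))=C_\lambda$ on $\Omega_\lambda$ differ by a bounded additive error controlled via the comparison principle (Theorem~\ref{thm:comp-p}), using monotonicity \ref{itm:C1'} and \ref{itm:C6}. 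The key quantity to track is $\lambda u_\lambda - h_\lambda(C_\lambda)$, which by \ref{itm:C6} plays the role of the first-order perturbation parameter of the critical equation on the moving domain.

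First I would expand the critical equation on $\Omega_\lambda=(1+r(\lambda))\Omega$ around $\lambda=0$. Pulling back by the dilation $x\mapsto (1+r(\lambda))x$ turns the equation on $\Omega_\lambda$ into an equation on the fixed domain $\Omega$ of the form $H((1+r(\lambda))x, (1+r(\lambda))^{-1}D\tilde w, h_\lambda(C_\lambda)) = C_\lambda$, and now \ref{itm:C6} handles the $u$-dependence while \ref{itm:C7} handles the $x$-dilation, each to first order with a local-modulus error. Testing this against a Mather measure $\mu\in\mathcal M$ — i.e. integrating the (sub/super)solution inequalities against $\mu$ as in the definition of $\mathcal M$ and using the closedness/invariance property of Mather measures to kill the $\langle D\tilde w, v\rangle$-type terms via the Lagrangian duality $L(x,v,0)=\langle p,v\rangle-H(x,p,0)$ on the Mather set — produces, after dividing by $\lambda$ and passing to the limit, the inequality
\begin{equation*}
    \limsup_{\lambda\to 0^+}\frac{1}{\lambda}\Big(C_\lambda - c(H) + h_\lambda(C_\lambda)\int_{T\overline{\Omega}}\partial_u L(x,v,0)\,d\mu\Big)\;\le\; \eta\int_{T\overline{\Omega}}\langle\partial_x L(x,v,0),-x\rangle\,d\mu
\end{equation*}
for every $\mu\in\mathcal M$, and the reverse inequality for a maximizing (or suitably chosen) $\nu\in\mathcal M$; compactness of $\mathcal M$ in the weak-$*$ topology and continuity of $\mu\mapsto\int\partial_u L\,d\mu$, $\mu\mapsto\int\langle\partial_x L,x\rangle\,d\mu$ then let me select $\nu\in\mathcal M$ achieving equality, which is precisely the displayed identity. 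This is the same mechanism used to prove \eqref{eq:u0-zeta} and \eqref{eq:v-sig} in the earlier theorems, so I would lean on Proposition~\ref{prop:aux-conv} and the Mather-measure machinery of Definition~\ref{defn:Mather-measures} rather than redo it.

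Given the identity, the equivalence $\zeta\Longleftrightarrow\rho$ is immediate: since $h_\lambda(C_\lambda)\to h_0(c(H))=0$ and $\int\partial_u L\,d\nu$ is bounded uniformly over $\mathcal M$, the quantity $C_\lambda - c(H) + h_\lambda(C_\lambda)\int\partial_u L\,d\nu$ divided by $\lambda$ converges (to the explicit right-hand side), so $\lambda^{-1}(C_\lambda-c(H))$ converges iff $\lambda^{-1}h_\lambda(C_\lambda)$ converges, with the affine relation $\zeta = -\rho\int_{T\overline\Omega}\partial_u L(x,v,0)\,d\nu + \eta\int_{T\overline\Omega}\langle\partial_x L(x,v,0),-x\rangle\,d\nu$ along the selected $\nu$. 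To upgrade this to the max-formula \eqref{eq:characterization-zeta}, I would rerun the one-sided inequality above for \emph{every} $\mu\in\mathcal M$ (not just the optimal $\nu$): it gives $\zeta \ge -\rho\int\partial_u L\,d\mu - \eta\int\langle\partial_x L,x\rangle\,d\mu$ for all $\mu$, while the equality case at $\nu$ shows the supremum is attained, so $\zeta$ equals the maximum over $\mathcal M$.

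The main obstacle I anticipate is the rigorous two-sided control of $\lambda u_\lambda - h_\lambda(C_\lambda)$ uniformly in $\lambda$, together with making the "test against a Mather measure" step fully legitimate in the state-constraint setting: Mather measures are supported in $T\overline\Omega$ including boundary points where the supersolution inequality holds on $\overline\Omega$ but the subsolution inequality only in $\Omega$, and one must combine the subsolution estimate for $u_\lambda$ (or $w_\lambda$) with the fact that $u_\lambda$ and the critical solution agree to first order in the interior, then integrate without boundary loss — this is exactly where \ref{itm:C0''} (the uniform state-constraint cone/distance condition) and the comparison principle are needed, and where the argument is genuinely heavier than in the closed-manifold case of \cite{WYZ}. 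A secondary technical point is uniformity of the local moduli $\varpi_3,\varpi_4$ along the sequence of (Lipschitz, equibounded) critical solutions, which is handled by the a priori gradient bounds coming from coercivity \ref{itm:C3} and the Lipschitz estimates implicit in \ref{itm:C45'}-type arguments.
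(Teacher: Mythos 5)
Your overall mechanism---comparing the action of the dilated, $u$-shifted Lagrangian $L\big((1+r(\lambda))x,v,h_\lambda(C_\lambda)\big)$ with that of $L(x,v,0)$ against suitable measures and then invoking \ref{itm:L}, \ref{itm:C6}, \ref{itm:C7} after dividing by $\lambda$---is indeed the mechanism of the paper, but the sketch has a genuine gap at its central step. Testing a (smoothed) subsolution of the perturbed critical equation against an arbitrary $\mu\in\mathcal{M}$ only yields $\int_{T\overline{\Omega}}\big(L((1+r(\lambda))x,v,h_\lambda(C_\lambda))+C_\lambda\big)\,d\mu\geq 0$, i.e.\ a one-sided bound valid for every $\mu$; weak-$*$ compactness of $\mathcal{M}$ and continuity of the functionals cannot, from this family of one-sided bounds alone, ``select $\nu$ achieving equality.'' The essential extra ingredient, absent from your proposal, is the family of Mather measures $\nu_\lambda\in\mathcal{M}^\lambda_{C_\lambda}$ of the \emph{perturbed} Hamiltonian $H\big((1+r(\lambda))x,p,h_\lambda(C_\lambda)\big)$: for these the action identity $\int\big(L((1+r(\lambda))x,v,h_\lambda(C_\lambda))+C_\lambda\big)\,d\nu_\lambda=0$ holds exactly, while holonomy gives $\int\big(L(x,v,0)+c(H)\big)\,d\nu_\lambda\geq 0$, and a Lemma~\ref{lem:Mc_conv}-type stability argument shows that any weak-$*$ limit $\nu$ of $\nu_\lambda$ lies in $\mathcal{M}$. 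Only by combining the two one-sided bounds along this particular $\nu$ does one obtain the existence of the limit in the displayed identity and the attainment of the maximum in \eqref{eq:characterization-zeta}. (Your opening paragraph about comparing $u_\lambda$ with a critical solution and tracking $\lambda u_\lambda-h_\lambda(C_\lambda)$ is a detour never used afterwards, and rightly so: the point of this criterion is that it makes no reference to the solutions $u_\lambda$.)

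Moreover, your inequality directions are reversed and, as written, inconsistent with the conclusion. Testing unperturbed Mather measures $\mu\in\mathcal{M}$ against the perturbed Lagrangian and subtracting $\int\big(L(x,v,0)+c(H)\big)\,d\mu=0$ gives, after division by $\lambda$, the \emph{lower} bound $\liminf_{\lambda\to 0^+}\frac{1}{\lambda}\big(C_\lambda-c(H)+h_\lambda(C_\lambda)\int_{T\overline{\Omega}}\partial_u L(x,v,0)\,d\mu\big)\geq \eta\int_{T\overline{\Omega}}\langle\partial_x L(x,v,0),-x\rangle\,d\mu$ for every $\mu\in\mathcal{M}$, not the $\limsup\leq$ bound you assert; the $\leq$ direction holds only along the special $\nu$ obtained as a limit of the perturbed Mather measures. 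If your stated ``$\limsup\leq$ for every $\mu$'' were true, the last step would force $\zeta$ to equal a \emph{minimum} over $\mathcal{M}$, contradicting \eqref{eq:characterization-zeta}; indeed your final paragraph silently switches to the $\geq$ inequality for all $\mu$, which is the correct one but not what you derived. Repairing the proposal therefore requires both fixing the inequality directions and introducing the measures $\mathcal{M}^\lambda_{C_\lambda}$ together with their convergence to $\mathcal{M}$ as $\lambda\to 0^+$.
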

 In Remark \ref{rmk:gene-case}, we will see how the measure $\nu\in\cM$ can be obtained. Here we propose the following corollaries of Theorem \ref{thm:characterization-zeta}:
\begin{cor}  \label{cor:diff-h0(c)-2}
Let $\Omega$ satisfy \ref{itm:C0''}, Hamiltonian $H$ satisfy \ref{itm:C1'}, \ref{itm:C2}, \ref{itm:C3}, \ref{itm:C6} and $c$ be in a neighborhood of $c(H)$. Then the followings are equivalent:
\begin{itemize}
    \item[(i)]  $c\mapsto h_0(c)$ is differentiable at $c = c(H)$;
    \item[(ii)] $-\int_{T\overline{\Omega}} \partial_uL(x,v,0)\;d\mu(x,v) = {\rm const} $ for all $\mu\in \mathcal{M}$.
\end{itemize}
If (ii) holds, then the constant in (ii) is forced to be  $1/h'_0(c(H))$.
\end{cor}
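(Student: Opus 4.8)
\emph{Proof proposal.} The plan is to reduce the statement to a one-sided (Danskin-type) differentiability analysis of the ergodic constant, regarded as a function of the frozen $u$-slot. Write $\mathfrak{c}(a)$ for the ergodic constant of $H(\cdot,\cdot,a)$ on $\Omega$, so that $\mathfrak{c}=h_0^{-1}$ and $\mathfrak{c}(0)=c(H)$, and recall the Ma\~n\'e/Mather representation of the ergodic constant, which for each frozen $a$ reads
\[
    \mathfrak{c}(a)=-\min_{\mu\in\mathcal{P}}\int_{T\overline{\Omega}}L(x,v,a)\,d\mu(x,v),
\]
where $\mathcal{P}$ is the ($a$-independent) class of closed/holonomic probability measures carried by $T\overline{\Omega}$; this applies verbatim to $H(\cdot,\cdot,a)$ since only continuity, convexity \ref{itm:C2} and coercivity \ref{itm:C3} of $p\mapsto H(\cdot,p,a)$ are used, and the minimizers at $a=0$ are exactly the Mather measures $\mathcal{M}$ of Definition \ref{defn:Mather-measures}.

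First I would differentiate the integrand in $a$: by the Fenchel/envelope identity $\partial_a L(x,v,a)=-\partial_u H\big(x,p^*(x,v,a),a\big)$ at the maximizing momentum $p^*$, assumption \ref{itm:C6} furnishes $\partial_u L(x,v,0)$ together with a uniform modulus, so that $a\mapsto\int L(x,v,a)\,d\mu$ is differentiable at $a=0$ with derivative $\int\partial_u L(x,v,0)\,d\mu$, uniformly for $\mu$ in the family of near-minimizers, which is weak-$*$ compact and carried by a fixed compact subset of $T\overline{\Omega}$ thanks to the superlinear strengthening of \ref{itm:C3}. Feeding this into the classical one-sided Danskin formulas for a minimum over a compact set — and using that near-minimizers of $\mu\mapsto\int L(\cdot,\cdot,a)\,d\mu$ converge as $a\to0$ to minimizers at $a=0$, i.e.\ into $\mathcal{M}$ — I obtain
\[
    \mathfrak{c}'_+(0)=\max_{\mu\in\mathcal{M}}\Big(-\int_{T\overline{\Omega}}\partial_u L(x,v,0)\,d\mu\Big),\qquad
    \mathfrak{c}'_-(0)=\min_{\mu\in\mathcal{M}}\Big(-\int_{T\overline{\Omega}}\partial_u L(x,v,0)\,d\mu\Big).
\]

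Next I would bring in the sign information from \ref{itm:C1'}: since $u\mapsto H(x,p,u)-\kappa u$ is nondecreasing, the envelope identity gives $-\partial_u L(x,v,0)=\partial_u H(x,p^*,0)\ge\kappa>0$, so both one-sided derivatives above are finite and bounded below by $\kappa$. Consequently $\mathfrak{c}=h_0^{-1}$ is differentiable at $0$ if and only if $\mathfrak{c}'_+(0)=\mathfrak{c}'_-(0)$, i.e.\ if and only if $\mu\mapsto-\int_{T\overline{\Omega}}\partial_u L(x,v,0)\,d\mu$ is constant on $\mathcal{M}$, which is exactly (ii); in that case the common value $K$ satisfies $K\ge\kappa>0$ and $\mathfrak{c}'(0)=K$. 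Since $\mathfrak{c}$ is continuous, strictly increasing and $\mathfrak{c}'(0)=K\neq0$, the one-dimensional inverse function theorem shows $h_0$ is differentiable at $\mathfrak{c}(0)=c(H)$ with $h_0'(c(H))=1/K$; this gives (ii)$\Rightarrow$(i) and the final identity $K=1/h_0'(c(H))$. For the converse, if $h_0$ is differentiable at $c(H)$ then $h_0'(c(H))$ cannot vanish: otherwise the difference quotients of $\mathfrak{c}=h_0^{-1}$ at $0$ would tend to $+\infty$, contradicting the finiteness of $\mathfrak{c}'_\pm(0)$ just established; hence $h_0'(c(H))>0$, $\mathfrak{c}$ is differentiable at $0$, and (ii) follows from the displayed Danskin formulas.

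The principal obstacle is this Danskin step: justifying differentiation under the integral sign uniformly over the relevant family of closed measures (where \ref{itm:C6}, the superlinear strengthening of \ref{itm:C3}, and the weak-$*$ compactness of bounded-action closed measures all enter), and verifying the upper hemicontinuity ``minimizers of the perturbed action converge to Mather measures of the unperturbed problem''. I should also check two bookkeeping points: that the representation above uses the same class $\mathcal{P}$ for every $a$ (it does — closedness is a property of $\mu$ alone), and that $-\partial_u L(\cdot,\cdot,0)$ is bounded on the fixed compact set carrying all Mather measures so that the Danskin extrema are genuinely finite (which follows from \ref{itm:C6} and the continuity of $H$). As a cross-check, the same conclusion can be read off from Theorem \ref{thm:characterization-zeta} specialized to $r\equiv0$ (so $\eta=0$) with $C_\lambda=c(H)+\lambda t$, letting $t$ range over positive and negative reals, although that route formally invokes the stronger hypotheses \ref{itm:C4}--\ref{itm:C7}.
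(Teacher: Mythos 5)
Your proposal is correct and is essentially the paper's own argument: the paper likewise obtains $h_0'\big(c(H)^+\big)=\min_{\mu\in\mathcal{M}}\big(-\int_{T\overline{\Omega}}\partial_u L(x,v,0)\,d\mu\big)^{-1}$ and $h_0'\big(c(H)^-\big)=\max_{\mu\in\mathcal{M}}\big(-\int_{T\overline{\Omega}}\partial_u L(x,v,0)\,d\mu\big)^{-1}$ by playing the defining (in)equalities of $\mathcal{M}$ and $\mathcal{M}_c$ against each other, using \ref{itm:C6} to pass to the limit and Lemma \ref{lem:Mc_conv} for the weak convergence $\nu_c\rightharpoonup\nu\in\mathcal{M}$ — which is exactly your Danskin step (differentiation under the integral plus hemicontinuity of minimizers) applied to $\mathfrak{c}=h_0^{-1}$, and your formulas coincide with the paper's after inversion. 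The only cosmetic difference is that the paper computes the one-sided derivatives of $h_0$ directly from the cross inequalities rather than invoking Danskin's theorem for the inverse function and then the one-dimensional inverse-function argument.
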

\begin{cor}\label{cor:characterization-zeta-limit-exists}
Let $\Omega_\lb$ satisfy \ref{itm:C0''}, $r(\lambda)$ satisfy \ref{itm:L} and the Hamiltonian $H$ satisfy \ref{itm:C1'}--\ref{itm:C7}.  For any $c\in\R$, if  $C_\lambda\to c$ as $\lambda\to 0^+$, then there exists $\nu\in \mathcal{M}_c$ such that
\begin{equation*}
    \lim_{\lambda\to 0^+}
    \left( 
    \frac{C_\lambda - c}{\lambda} + \frac{h_\lambda(C_\lambda) - h_0(c)}{\lambda}\int_{T\overline{\Omega}} \partial_u L(x,v,h_0(c) )d\nu
    \right) = \eta \int_{T\overline{\Omega}} \langle \partial_x L(x,v,h_0(c) ),-x\rangle d\nu.
\end{equation*}
\end{cor}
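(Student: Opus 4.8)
The plan is to reduce Corollary \ref{cor:characterization-zeta-limit-exists} to Theorem \ref{thm:characterization-zeta} by translating the Hamiltonian in its $u$-slot so that the relevant critical level moves from $c(H)$ to $c$. Fix $c\in\mathbb{R}$ and recall (from Section \ref{sec:4-generalized-conv-c}) that $h_0(c)=\lim_{\lambda\to 0^+}h_\lambda(c)$ is the unique value for which the ergodic constant of $H(\cdot,\cdot,h_0(c))$ on $\Omega$ equals $c$. Introduce
\[
G(x,p,u) := H\big(x,p,u+h_0(c)\big), \qquad (x,p,u)\in T^*\overline{\cU}\times\mathbb{R}.
\]
The first step is to check that $G$ satisfies \ref{itm:C1'}--\ref{itm:C7}: \ref{itm:C1'}--\ref{itm:C5} are immediate since these conditions are invariant under a fixed shift of $u$, while \ref{itm:C6}--\ref{itm:C7} for $G$ at $u=0$ amount to the one-sided regularity of $H$ at the level $u=h_0(c)$, which is exactly the regularity already presupposed in Theorem \ref{thm:change0-domain-c-ener} in order to define $\partial_uL(\cdot,\cdot,h_0(c))$ and $\partial_xL(\cdot,\cdot,h_0(c))$. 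The Lagrangian of $G$ is $L_G(x,v,u)=L(x,v,u+h_0(c))$, so $L_G(\cdot,\cdot,0)=L(\cdot,\cdot,h_0(c))$ and, in particular, $\partial_uL_G(\cdot,\cdot,0)=\partial_uL(\cdot,\cdot,h_0(c))$ and $\partial_xL_G(\cdot,\cdot,0)=\partial_xL(\cdot,\cdot,h_0(c))$.

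The second step is to match the auxiliary data of $G$ with those of $H$. Since $G(\cdot,\cdot,0)=H(\cdot,\cdot,h_0(c))$ has ergodic constant $c$ on $\Omega$, we have $c(G)=c$, and the set of Mather measures of $G(\cdot,\cdot,0)$ is precisely $\mathcal{M}_c$ as in Theorem \ref{thm:change0-domain-c-ener}. Writing $h_\lambda^G$ for the function that plays for $G$ the role $h_\lambda$ plays for $H$ — that is, $G(\cdot,\cdot,h_\lambda^G(c'))$ has ergodic constant $c'$ on $\Omega_\lambda$ — the identity $G(\cdot,\cdot,h_\lambda^G(c'))=H(\cdot,\cdot,h_\lambda^G(c')+h_0(c))$ together with uniqueness of $h_\lambda$ (a consequence of \ref{itm:C1'}, \ref{itm:C2}, \ref{itm:C3}) forces $h_\lambda^G(c')=h_\lambda(c')-h_0(c)$ for all $c'$; letting $\lambda\to 0^+$ gives $h_0^G(c)=0$, consistently with $G(\cdot,\cdot,0)$ being critical at level $c$.

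The final step is to apply Theorem \ref{thm:characterization-zeta} to $G$, which is legitimate since $\Omega$ satisfies \ref{itm:C0''}, $r(\lambda)$ satisfies \ref{itm:L}, $G$ satisfies \ref{itm:C1'}--\ref{itm:C7}, and $C_\lambda\to c(G)=c$ by hypothesis. This produces $\nu\in\mathcal{M}_c$ such that
\[
\lim_{\lambda\to 0^+}\frac{1}{\lambda}\left( C_\lambda - c(G) + h_\lambda^G(C_\lambda)\int_{T\overline{\Omega}}\partial_uL_G(x,v,0)\,d\nu\right) = \eta\int_{T\overline{\Omega}}\langle\partial_xL_G(x,v,0),-x\rangle\,d\nu.
\]
Substituting $c(G)=c$, $h_\lambda^G(C_\lambda)=h_\lambda(C_\lambda)-h_0(c)$, $\partial_uL_G(\cdot,\cdot,0)=\partial_uL(\cdot,\cdot,h_0(c))$ and $\partial_xL_G(\cdot,\cdot,0)=\partial_xL(\cdot,\cdot,h_0(c))$ turns this into exactly the asserted identity, and $\nu\in\mathcal{M}_c$ is the claimed measure. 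I expect the only non-mechanical point to be the verification, in the first step, that $G$ genuinely inherits \ref{itm:C6}--\ref{itm:C7}; if the stated assumptions are read as holding only at $u=0$, the resolution is to note that the very formulation of Theorem \ref{thm:change0-domain-c-ener} already requires the corresponding regularity of $H$ at the level $h_0(c)$, so no hypothesis beyond those of the corollary is actually used. Everything else in the argument is a bookkeeping translation of notation.
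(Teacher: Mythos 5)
Your proof is correct and is essentially the paper's own argument made explicit: the paper disposes of this corollary in one line (``by the same argument as in \eqref{eq:general-ratio} and in the proof of Theorem \ref{thm:change0-domain-c-ener}''), and your conjugation $G(x,p,u)=H(x,p,u+h_0(c))$, together with the identities $c(G)=c$, $\mathcal{M}^G=\mathcal{M}_c$, $h^G_\lambda(\cdot)=h_\lambda(\cdot)-h_0(c)$ and $L_G(\cdot,\cdot,0)=L(\cdot,\cdot,h_0(c))$, is precisely the clean way to carry that out via Theorem \ref{thm:characterization-zeta}. The one caveat you flag is real but is a defect of the paper's hypotheses rather than of your argument: \ref{itm:C6} and \ref{itm:C7} as literally written only provide differentiability at the level $u=0$, whereas both the reduction and the very statement of the corollary (which involves $\partial_uL(\cdot,\cdot,h_0(c))$ and $\partial_xL(\cdot,\cdot,h_0(c))$) require the analogous regularity at $u=h_0(c)$; your resolution --- that this regularity is implicitly presupposed, exactly as in Theorem \ref{thm:change0-domain-c-ener} --- is the right one.
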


\subsection{Organization of the paper} In Section \ref{sec:2-preliminaries}, we discuss the existence and well-posedness of solutions to \eqref{eq:hj-lb} introducing the main tool: the variational principle for contact Hamiltonians with state constraints. We also define Mather measures, explore their properties, and prove Theorem \ref{thm:fix} for a fixed domain. Section \ref{sec:3-conv-variable-domains-general} is devoted to prove Theorem \ref{thm:ge}. In Section \ref{sec:4-generalized-conv-c} we consider the convergence of solutions associated with variable domains and arbitrary $\lim_{\lambda \rightarrow 0^+}C_\lambda=c\in\mathbb{R}$ in \eqref{eq:hj-lb}, and the alternative criterion of \eqref{eq:assump_ratio}. For readability, the proofs of some technical lemmas are moved to the Appendix.

\section{Variational principle of the contact Hamilton--Jacobi equation with state-constraint}\label{sec:2-preliminaries}

As is shown in \eqref{eq:mane-cri}, $c(H)$ is finite and unique. Besides, \eqref{eq:hj-0} always possesses a viscosity solution (due to a vanishing discount approach in \cite{Tu}), which is Lipschitz in $\Omega$. Without loss of generality, we assume $u$ is such a solution of \eqref{eq:hj-0}, then 
\begin{equation*}
    \begin{cases}
        u^-:=u-\Vert u\Vert_{L^\infty(\Omega)}\leq 0,\\
        u^+:=u+\Vert u\Vert_{L^\infty(\Omega)}\geq 0
    \end{cases}, \qquad x\in \Omega
\end{equation*}
present as a subsolution in $\Om$ and a supersolution on $\ol\Om$, respectively, of the following 
\begin{equation}\label{eq:hj-fix}
    H(x,D  u(x),u(x))=c(H)    
\end{equation}
due to \ref{itm:C1'} ($u\mapsto H(x,p,u)$ is only required to be non-decreasing for this conclusion). Recall that viscosity subsolutions of \eqref{eq:hj-fix} in $\cU$ are Lipschitz, and therefore they are equivalent to a.e. subsolutions (see \cite{BJ,tran_hamilton-jacobi_2021}). 

\begin{thm}[Perron's method \cite{KTT}] \label{thm:perron}
Suppose
\begin{equation*}
    C_l:=\max\left\{|p|: H(x,p,u^-(x))\leq c(H), \;x\in\overline{\mathcal{U}}\right\}    
\end{equation*}
and 
\begin{equation*}
    \cF:=\Big\{\text{$\om\in C(\ol\Om)$ is a subsolution of\;\eqref{eq:hj-fix}\; in $\Om$}\;
    \Big|\; u^-\leq\om\leq u^+, \Vert D \om\Vert_{L^\infty(\Om,\R)}\leq C_l\Big\}, 
\end{equation*}
then $u^*:=\sup\cF$ is a viscosity solution of 
\beq\label{eq:hj-fix-sc}
\quad\left\{
\begin{aligned}
H\left(x,D  \vartheta(x),\vartheta(x)\right)\leq c(H),\quad x\in \Om,\\
H\left(x,D  \vartheta(x),\vartheta(x)\right)\geq c(H),\quad x\in \ol\Om.\\
\end{aligned}
\right.
\eeq
\end{thm}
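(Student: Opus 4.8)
The plan is a Perron argument adapted to the state-constraint: realize $u^{*}=\sup\cF$ as a viscosity subsolution in $\Om$ via the standard stability of subsolutions under suprema, and then recover the supersolution inequality on all of $\ol\Om$ by a bump-function contradiction. First I would check that $\cF\neq\emptyset$ and that $u^{*}$ is a legitimate object: since $u^{-}$ is an a.e.\ subsolution of \eqref{eq:hj-fix} in $\Om$ (recalled just above the statement), at a.e.\ $x\in\Om$ the pair $(x,Du^{-}(x))$ satisfies $H(x,Du^{-}(x),u^{-}(x))\le c(H)$, so $|Du^{-}(x)|\le C_{l}$; together with $u^{-}\le u^{-}\le u^{+}$ this gives $u^{-}\in\cF$. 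By the coercivity \ref{itm:C3} and continuity of $H$ on the compact set $\ol\cU$ one has $C_{l}<+\infty$, hence every member of $\cF$ is Lipschitz with constant $C_{l}$ and takes values in a fixed compact interval; consequently $u^{*}:=\sup\cF$ is Lipschitz with the same constant, so $u^{*}\in\mathrm{C}(\ol\Om)$ and $u^{-}\le u^{*}\le u^{+}$. The standard fact that the supremum of a locally equibounded, equi-Lipschitz family of viscosity subsolutions of \eqref{eq:hj-fix} is again a viscosity subsolution (using continuity of $H$ to pass to the limit along nearby test points) shows $u^{*}$ is a subsolution of \eqref{eq:hj-fix} in $\Om$; in particular $u^{*}\in\cF$ is its maximal element.

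\noindent The heart of the proof is the supersolution inequality on $\ol\Om$, and I would argue by contradiction. Suppose there are $x_{0}\in\ol\Om$ and $\phi\in C^{1}$, normalized so that $\phi(x_{0})=u^{*}(x_{0})$, such that $u^{*}-\phi$ has a local minimum over $\ol\Om$ at $x_{0}$ and $H(x_{0},D\phi(x_{0}),u^{*}(x_{0}))<c(H)$. The first observation is that necessarily $u^{*}(x_{0})<u^{+}(x_{0})$: if $u^{*}(x_{0})=u^{+}(x_{0})$, then (since $u^{*}\le u^{+}$ with equality at $x_{0}$) the same $\phi$ touches $u^{+}$ from below at $x_{0}$, and the supersolution property of $u^{+}$ on $\ol\Om$ would force $H(x_{0},D\phi(x_{0}),u^{+}(x_{0}))\ge c(H)$, contradicting the displayed strict inequality. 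Replacing $\phi$ by $\phi_{0}(x):=\phi(x)-|x-x_{0}|^{2}$ makes the minimum of $u^{*}-\phi_{0}$ strict without altering $D\phi(x_{0})$ or the value $\phi_{0}(x_{0})=u^{*}(x_{0})$. Using continuity of $H$, I would then choose $r>0$ and afterwards $\delta>0$ small enough that on $\ol{B_{r}(x_{0})}$ both $H(x,D\phi_{0}(x),\phi_{0}(x)+\delta)\le c(H)$ and $\phi_{0}(x)+\delta<u^{+}(x)$ hold (the latter available since $u^{*}(x_{0})<u^{+}(x_{0})$), and that $\phi_{0}+\delta<u^{*}$ on $\{\rho\le|x-x_{0}|\le r\}$ for some $\rho\in(0,r)$ (available from the strictness of the minimum of $u^{*}-\phi_{0}$).

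\noindent Now set $\om_{\delta}:=\max\{u^{*},\phi_{0}+\delta\}$ on $B_{r}(x_{0})\cap\ol\Om$ and $\om_{\delta}:=u^{*}$ on $\ol\Om\setminus B_{r}(x_{0})$; the two formulas agree on a neighborhood of $\partial B_{r}(x_{0})$ inside $\ol\Om$, so $\om_{\delta}\in\mathrm{C}(\ol\Om)$. I claim $\om_{\delta}\in\cF$. It is a viscosity subsolution of \eqref{eq:hj-fix} in $\Om$, since locally it is either $u^{*}$ or the maximum of $u^{*}$ and the classical subsolution $\phi_{0}+\delta$. The inequalities $u^{-}\le u^{*}\le\om_{\delta}\le u^{+}$ hold by construction together with the previous paragraph. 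For the gradient bound, at a.e.\ point of $\Om$ the gradient $D\om_{\delta}$ is either $Du^{*}$, with $|Du^{*}|\le C_{l}$, or (on the set where $\om_{\delta}=\phi_{0}+\delta$) it is $D\phi_{0}(x)$ at a point where $\phi_{0}(x)+\delta=\om_{\delta}(x)\ge u^{-}(x)$; then by the monotonicity \ref{itm:C1'} ($u\mapsto H$ nondecreasing), $H(x,D\phi_{0}(x),u^{-}(x))\le H(x,D\phi_{0}(x),\phi_{0}(x)+\delta)\le c(H)$, so $|D\phi_{0}(x)|\le C_{l}$ by the definition of $C_{l}$. Hence $\om_{\delta}\in\cF$, yet $\om_{\delta}(x_{0})=u^{*}(x_{0})+\delta>u^{*}(x_{0})=(\sup\cF)(x_{0})$, a contradiction. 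Therefore $u^{*}$ satisfies $H(x,Du^{*},u^{*})\ge c(H)$ on $\ol\Om$ in the viscosity sense, and combined with the first paragraph $u^{*}$ is a viscosity solution of \eqref{eq:hj-fix-sc}.

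\noindent I expect the routine part to be the stability of subsolutions under suprema and the elementary compactness giving $C_{l}<\infty$. The delicate point is the bump construction: one must tune $r$ and $\delta$ so that the competitor $\om_{\delta}$ simultaneously stays below $u^{+}$, retains the gradient bound $C_{l}$, and glues continuously across $\partial B_{r}(x_{0})$. All three are arranged by exploiting that $u^{+}$ is already a supersolution (which even forces $u^{*}(x_{0})<u^{+}(x_{0})$ at a would-be failure point) and that $H$ is merely nondecreasing — not necessarily strictly increasing — in $u$, so that replacing the $u$-slot $\phi_{0}+\delta$ by the smaller value $u^{-}$ can only decrease $H$; a further mild subtlety is that for $x_{0}\in\partial\Om$ the subsolution test only needs to be verified in the open set $\Om$, which is exactly the region where $\om_{\delta}$ is modified.
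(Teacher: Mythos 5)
Your proposal is correct and follows essentially the same Perron argument as the paper: the supremum of the equi-Lipschitz subsolution family $\mathcal{F}$ is again a subsolution, and a bump-function contradiction (using that $u^{+}$ is a supersolution to force $u^{*}(x_{0})<u^{+}(x_{0})$, then inserting the competitor $\max\{u^{*},\phi_{0}+\delta\}$ into $\mathcal{F}$) gives the supersolution property on $\overline{\Omega}$. The only real difference is that you explicitly justify the gradient bound $|D\phi_{0}|\le C_{l}$ on the modified region via the monotonicity \ref{itm:C1'} and the definition of $C_{l}$, a step the paper folds into its choice of test function without comment.
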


\begin{proof} Because of the Lipschitz property of functions in $\mathcal{F}$, $u^* \in \mathrm{C}(\overline{\Omega})$. Clearly, $u^*$ is a subsolution of \eqref{eq:hj-fix} in $\Om$. If it is not a supersolution on $\ol\Om$, then there exists a $x_0\in\ol\Om$ and a function $\varphi\in C^1(\ol\Om)$ satisfying $u^*(x_0)=\varphi(x_0)$, 
\begin{equation*}
	\left\{
		\begin{aligned}
			&\Vert D\varphi\Vert _{L^\infty(B(x_0,r)\cap\ol\Om)}\leq C_l, \\
	 		&u^*(x)\geq \varphi(x)+|x-x_0|^2,\quad\forall x\in B(x_0,r)	\cap\ol\Om
		 \end{aligned}
	 \right.
\end{equation*}
for some $r>0$, and 
\begin{equation}\label{eq:<}
	H(x_0,D \varphi(x_0),\varphi(x_0))<c(H).
\end{equation}
By definition, $u^*\leq u^+$ on $\overline{\Omega}$, therefore $\varphi(x_0) = u^*(x_0) \leq u^+(x_0)$. 
If $\varphi(x_0)=u^+(x_0)$, then
\begin{equation*}
    H\left(x_0,D \varphi(x_0), u^+(x_0)\right)=H(x_0,D \varphi(x_0), \varphi(x_0)\geq c(H),
\end{equation*}
which contradicts with \eqref{eq:<}. So $\varphi(x_0)<u^+(x_0)$. Due to the continuity of $\varphi$ and $H$, we can find $0<\eps<r/2$ sufficiently small such that 
\begin{equation*}
    \begin{cases}
        \varphi(x)+\eps^2<u^+(x)\\
        H(x,D \varphi(x),\varphi(x)+\eps^2)<c(H)
    \end{cases}\qquad \text{for}\;x\in B(x_0,2\eps)\cap\ol\Om.
\end{equation*}
That implies $\varphi(x)+\eps^2$ is a strict subsolution in $B(x_0,2\eps)\cap\ol\Om$. If we define 
\[
\om(x):=\left\{
\begin{aligned}
&\max\{u^*(x), \varphi(x)+\eps^2\}, \quad &x\in B(x_0,\eps)\cap\ol\Om, \\
& u^*(x),\quad &x\in \ol\Om\backslash B(x_0,\eps),
\end{aligned}
\right.
\]
then it has to be a subsolution of \eqref{eq:hj-fix} and is contained in $\cF$. However, 
\[
\om(x_0)\geq \varphi(x_0)+\eps^2=u^*(x_0)+\eps^2>u^*(x_0)=\sup\cF(x_0)
\]
contradicts with the definition of $\cF$, so $u^*$ is also a supersolution of \eqref{eq:hj-fix} on $\ol\Om$.
\end{proof}

\begin{rmk}\label{rmk:superlinear} As a consequence of the gradient bound $C_l$, we see that the value of $H(x,p,u)$ for $|p|$ large is irrelevant when considering solutions to such a state-constraint problem \eqref{eq:hj-lb}. Therefore, one can modify $H(x,p,u)$ when $|p|$ large enough to strenghthen \ref{itm:C3} to \ref{itm:C3'} as follows
\begin{description}[style=multiline, labelwidth=1.1cm, leftmargin=2cm]
    \item[\namedlabel{itm:C3'}{$(\mathcal{C}_3')$}] $\lim_{|p|\rightarrow+\infty}H(x,p,u)/|p|=+\infty$ for all $(x,u)\in\ol\cU\times\R$.
\end{description}
That is why we can always assume the Lagrangian is finite.
\end{rmk}

On the other side, the comparison principle guarantees that $u^*$ is the unique solution of \eqref{eq:hj-fix-sc}. For consistency, the comparison principle (following \cite{C-DL}) is independently supplied here (see the proof in Appendix). We can refer to \cite{CIL} for other versions of the comparison principle. 

\begin{thm}[Comparison principle]\label{thm:comp-p} Let $\Omega$ satisfy \ref{itm:C0''} and $H$ satisfy \ref{itm:C1'}, \ref{itm:C3}, \ref{itm:C4}. If $v_1\in \mathrm{C}(\overline{\Omega})$ be a subsolution of \eqref{eq:hj-fix} in $\Omega$ and $v_2\in \mathrm{C}(\overline{\Omega})$ is a supersolution of \eqref{eq:hj-fix} on $\overline{\Omega}$, then $v_1\leq v_2$ on $\overline{\Omega}$. 
\end{thm}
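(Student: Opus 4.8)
The plan is to reduce this state-constraint comparison to an ordinary interior comparison by replacing the subsolution $v_1$ with a radial rescaling that is a genuine subsolution on a \emph{full open neighborhood} of $\overline{\Omega}$; this is the device used in \cite{C-DL}. First I would record that, by \ref{itm:C3} (cf. Remark~\ref{rmk:superlinear}, where \ref{itm:C3} may be upgraded to \ref{itm:C3'}), any viscosity subsolution $v_1$ of \eqref{eq:hj-fix} in $\Omega$ is Lipschitz on compact subsets of $\Omega$ with a constant $C_l$ depending only on $H$, $\overline{\Omega}$ and $\Vert v_1\Vert_{L^\infty(\overline{\Omega})}$. For $\varepsilon\in(0,\varepsilon_0)$ put $\mu=\mu_\varepsilon:=1/(1+\varepsilon)$ and $v_1^\varepsilon(x):=v_1(\mu x)$, which is well defined on the open set $V_\varepsilon:=(1+\varepsilon)\Omega$. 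Using \ref{itm:C0''} (star-shapedness with respect to $0\in\Omega$, so that $\overline{\Omega}$ is compactly contained in $(1+\varepsilon)\Omega$ and $\mu\overline{\Omega}$ is compactly contained in $\Omega$) one can pick an open $W_\varepsilon$ with $\overline{\Omega}\subset W_\varepsilon\subset V_\varepsilon$ on which $v_1^\varepsilon$ corresponds to $v_1$ on a compact subset of $\Omega$, hence is $C_l$-Lipschitz; shrinking $\varepsilon_0$ we may also assume $\overline{V}_\varepsilon\subset\cU$.

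The heart of the matter is the claim that $v_1^\varepsilon$ is a viscosity subsolution of $H(x,Dv_1^\varepsilon,v_1^\varepsilon)=c(H)+\omega(\varepsilon)$ in $W_\varepsilon$, for a modulus $\omega$ with $\omega(0^+)=0$. Indeed, if $\phi\in C^1$ touches $v_1^\varepsilon$ from above at $x_0\in W_\varepsilon$, then $y\mapsto\phi(y/\mu)$ touches $v_1$ from above at $y_0:=\mu x_0\in\Omega$, so the subsolution inequality for $v_1$ gives $H\big(y_0,\mu^{-1}D\phi(x_0),v_1(y_0)\big)\leq c(H)$, while the $C_l$-Lipschitz bound on $v_1^\varepsilon$ near $x_0$ forces $|D\phi(x_0)|\leq C_l$. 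I would then move the base point from $y_0$ to $x_0=y_0/\mu$ by \ref{itm:C4} (the displacement is $O(\varepsilon)$ and $|D\phi(x_0)|\leq C_l$), and replace the momentum $\mu^{-1}D\phi(x_0)$ by $D\phi(x_0)$ using uniform continuity of $H$ on the compact set $\overline{\cU}\times\{|p|\leq 2C_l\}\times[-\Vert v_1\Vert_{L^\infty(\overline{\Omega})},\Vert v_1\Vert_{L^\infty(\overline{\Omega})}]$ (the momentum change is $O(\varepsilon C_l)$). Both errors are absorbed into one modulus $\omega(\varepsilon)\to 0$. I expect this to be the only genuinely delicate step: it is exactly what compensates for the fact that $v_1$ a priori satisfies no inequality on $\partial\Omega$, and it is where the star-shaped geometry \ref{itm:C0''} together with the $x$-regularity \ref{itm:C4} are used.

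Granted the claim, I would finish with the usual doubling of variables on $\overline{\Omega}$. Assume $m_\varepsilon:=\max_{\overline{\Omega}}(v_1^\varepsilon-v_2)>0$ and let $(x_\delta,y_\delta)$ maximize $\Phi_\delta(x,y):=v_1^\varepsilon(x)-v_2(y)-\tfrac{1}{2\delta}|x-y|^2$ over $\overline{\Omega}\times\overline{\Omega}$; classically $|x_\delta-y_\delta|\to 0$ and $v_1^\varepsilon(x_\delta)-v_2(y_\delta)\geq m_\varepsilon$. Since $x_\delta\in\overline{\Omega}\subset W_\varepsilon$ and $v_1^\varepsilon$ is a subsolution on the \emph{open} set $W_\varepsilon$, the subsolution test applies at $x_\delta$ whether or not $x_\delta\in\partial\Omega$; and the state-constraint supersolution test for $v_2$ applies at every point $y_\delta\in\overline{\Omega}$. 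With $p_\delta:=(x_\delta-y_\delta)/\delta$, which satisfies $|p_\delta|\leq C_l$ because $v_1^\varepsilon$ is $C_l$-Lipschitz, this gives $H(x_\delta,p_\delta,v_1^\varepsilon(x_\delta))\leq c(H)+\omega(\varepsilon)$ and $H(y_\delta,p_\delta,v_2(y_\delta))\geq c(H)$. Replacing $y_\delta$ by $x_\delta$ in the second inequality via \ref{itm:C4} (error $\leq\varpi_1(\Vert v_2\Vert_{L^\infty(\overline{\Omega})},|x_\delta-y_\delta|(C_l+1))\to 0$ as $\delta\to 0$) and then invoking \ref{itm:C1'} in the form $H(x_\delta,p_\delta,v_1^\varepsilon(x_\delta))-H(x_\delta,p_\delta,v_2(y_\delta))\geq\kappa\,(v_1^\varepsilon(x_\delta)-v_2(y_\delta))\geq\kappa m_\varepsilon$, I obtain $\kappa m_\varepsilon\leq\omega(\varepsilon)+o_\delta(1)$. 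Letting $\delta\to 0$ yields $m_\varepsilon\leq\omega(\varepsilon)/\kappa$ (which is trivially true when $m_\varepsilon\leq 0$), and letting $\varepsilon\to 0^+$, using $v_1^\varepsilon\to v_1$ uniformly on $\overline{\Omega}$ by uniform continuity of $v_1$, gives $\max_{\overline{\Omega}}(v_1-v_2)\leq 0$, i.e.\ $v_1\leq v_2$ on $\overline{\Omega}$.
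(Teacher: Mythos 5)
Your overall strategy is the same as the paper's: dilate the subsolution radially using the star-shapedness in \ref{itm:C0''} so that the difficulty at $\partial\Omega$ disappears, then double variables and conclude with \ref{itm:C1'} and \ref{itm:C4}. (The paper dilates both argument and value, $v_1^\varepsilon(x)=(1+\varepsilon)v_1(x/(1+\varepsilon))$, and keeps the Hamiltonian evaluated at the shifted base point; you dilate only the argument and convert $v_1^\varepsilon$ into an approximate subsolution of the original equation on an open neighborhood $W_\varepsilon\supset\overline{\Omega}$. That is a harmless repackaging, and your error analysis for the conversion is correct.)

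There is, however, one genuine gap in your doubling step. You maximize $\Phi_\delta$ over $\overline{\Omega}\times\overline{\Omega}$ and then assert that ``the subsolution test applies at $x_\delta$ whether or not $x_\delta\in\partial\Omega$'' because $v_1^\varepsilon$ is a subsolution on the open set $W_\varepsilon\supset\overline{\Omega}$. This does not follow. The viscosity subsolution property on $W_\varepsilon$ constrains only those test functions that touch $v_1^\varepsilon$ from above on a full neighborhood of the touching point, whereas your $x_\delta$ maximizes $x\mapsto v_1^\varepsilon(x)-\tfrac{1}{2\delta}|x-y_\delta|^2$ only relative to $\overline{\Omega}$; if $x_\delta\in\partial\Omega$ this is strictly weaker (more functions touch from above relative to $\overline{\Omega}$ than relative to a ball around $x_\delta$: e.g.\ $v(x)=|x|$ is a subsolution of $|v'|\le 1$ on $(-2,2)$, yet $\phi(x)=-5x$ touches it from above at $0$ relative to $[-1,0]$ with $|\phi'(0)|=5$). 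So the inequality $H(x_\delta,p_\delta,v_1^\varepsilon(x_\delta))\le c(H)+\omega(\varepsilon)$ is unjustified precisely in the boundary case that the dilation was designed to handle. The repair is immediate with what you have already built: maximize the $x$-variable over $\overline{\Omega'}$ for some open $\Omega'$ with $\overline{\Omega}\subset\Omega'\subset\overline{\Omega'}\subset W_\varepsilon$ (the lower bound $\Phi_\delta(x_\delta,y_\delta)\ge m_\varepsilon$ survives, since the diagonal of $\overline{\Omega}\times\overline{\Omega}$ is still admissible), and then use $|x_\delta-y_\delta|\to 0$, $y_\delta\in\overline{\Omega}$ and $\mathrm{dist}(\overline{\Omega},\partial\Omega')>0$ to force $x_\delta$ into the open set $\Omega'$ for $\delta$ small. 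This is exactly the role the quantitative distance condition in \ref{itm:C0''} plays in the paper's proof, where the first variable ranges over $(1+\varepsilon)\overline{\Omega}$ and the estimate $|x_\varepsilon-y_\varepsilon|=o(\varepsilon)<\theta\varepsilon$ pushes $x_\varepsilon$ into the open set $(1+\varepsilon)\Omega$.
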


Next, we propose an alternative interpretation to $u^*$ from the viewpoint of optimal control theory:

\begin{prop}\label{prop:opt-formula}
For any Hamiltonian $H$ satisfying \ref{itm:C1'}, \ref{itm:C2}, \ref{itm:C3'}, there holds
\beq\label{eq:sol-exp}
u^*(x)=\inf_{\substack{\gamma\in {\rm Lip}([t,0],\ol\Om)\\\gamma(t)=y,\gamma(0)=x}}\Big\{u^*(\gamma(t))+\int_t^0\Big( L\big(\gamma(s),\dot\gamma(s),u^*(\gamma(s))\big)+c(H)\Big) ds\Big\}
\eeq
for any $t\leq 0$. Moreover, there exists a uniformly Lipschitz curve $\gamma_x:(-\infty,0]\rightarrow\ol\Om$ ending with $x$, such that
\beq
u^*(x)-u^*(\gamma_x(t))=\int_t^0\Big( L\big(\gamma_x(s),\dot\gamma_x(s),u^*(\gamma_x(s))\big)+c(H)\Big) ds,\quad\forall\, t\leq 0.
\eeq
\end{prop}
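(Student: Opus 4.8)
The plan is to establish the dynamic programming principle \eqref{eq:sol-exp} for the value function
\[
V(x) := \inf_{\substack{\gamma\in {\rm Lip}([t,0],\ol\Om)\\ \gamma(t)=y,\ \gamma(0)=x}}\Big\{u^*(\gamma(t))+\int_t^0\big( L(\gamma,\dot\gamma,u^*(\gamma))+c(H)\big)\,ds\Big\}
\]
and then to prove $V\equiv u^*$ by the comparison principle (Theorem \ref{thm:comp-p}), after checking that $V$ solves the state-constraint problem \eqref{eq:hj-fix-sc}. First I would note that, since $u^*$ is Lipschitz on $\ol\Om$ with gradient bound $C_l$ (Theorem \ref{thm:perron}), the "frozen" integrand $s\mapsto L(\gamma(s),\dot\gamma(s),u^*(\gamma(s)))$ is a well-defined measurable function for any Lipschitz curve, and the assumed superlinearity \ref{itm:C3'} gives $L(x,v,u)\geq \theta(|v|)-C$ for a superlinear $\theta$ on the relevant compact range of $u$; this yields the compactness (via Ascoli) needed to guarantee that the infimum defining $V$ is attained by some Lipschitz minimizer with a uniform Lipschitz bound, and simultaneously shows $V$ is finite and Lipschitz. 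A subtlety here is that the $u^*$ appearing inside $L$ is the \emph{given} solution, not the unknown, so this is a genuinely linear (in the dynamic-programming sense) problem once $u^*$ is fixed — there is no fixed-point to solve, and the usual arguments for state-constraint value functions in \cite{S1,S2,C-DL} apply directly.

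Next I would verify the semigroup identity for $V$: for $t\le \tau\le 0$,
\[
V(x)=\inf_{\gamma(\tau)\in\ol\Om}\Big\{V_\tau(\gamma(\tau))+\int_\tau^0\big(L(\gamma,\dot\gamma,u^*(\gamma))+c(H)\big)\,ds\Big\},
\]
which is the standard splitting-of-trajectories argument and uses only measurable selection / concatenation of Lipschitz curves together with the uniform Lipschitz bound. From this and the frozen Hamiltonian $\wt H(x,p):=H(x,p,u^*(x))$, one gets by the classical theory (e.g. \cite{C-DL, Bardi1997}) that $V$ is a viscosity subsolution of $\wt H(x,DV)\le c(H)$ in $\Om$ and a viscosity supersolution of $\wt H(x,DV)\ge c(H)$ on $\ol\Om$ — i.e. $V$ solves \eqref{eq:hj-fix-sc}. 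Here one must be slightly careful that $\wt H$ only inherits continuity and coercivity from $H$ via \ref{itm:C3}, \ref{itm:C4} (the Lipschitz dependence of $u^*$ feeds into the $x$-modulus), which is exactly what Theorem \ref{thm:comp-p} requires; convexity \ref{itm:C2} of $\wt H$ in $p$ is what legitimizes the Lagrangian representation in the first place.

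Since $u^*$ is itself a solution of \eqref{eq:hj-fix-sc} by Theorem \ref{thm:perron}, and $V$ is also a solution of the same problem with the same frozen Hamiltonian $\wt H$, Theorem \ref{thm:comp-p} applied in both directions gives $V\equiv u^*$, which is \eqref{eq:sol-exp}. Finally, to produce the calibrated curve $\gamma_x:(-\infty,0]\to\ol\Om$, I would take for each $k\in\N$ a minimizer $\gamma^k$ on $[-k,0]$ realizing $V(x)$ with the uniform Lipschitz bound, observe via the semigroup property that $\gamma^k$ is also calibrated on every subinterval, and extract by Arzelà–Ascoli (diagonal argument over $k\to\infty$) a locally uniform limit $\gamma_x$ on $(-\infty,0]$; lower semicontinuity of the action along with \eqref{eq:sol-exp} forces equality
\[
u^*(x)-u^*(\gamma_x(t))=\int_t^0\big(L(\gamma_x,\dot\gamma_x,u^*(\gamma_x))+c(H)\big)\,ds
\]
for all $t\le 0$. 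The main obstacle I anticipate is the \emph{state-constraint} aspect: producing admissible near-optimal curves that stay in $\ol\Om$ and controlling them near $\partial\Om$, so that the supersolution property on $\ol\Om$ (not merely in $\Om$) holds — this is precisely where condition \ref{itm:C0''} on the geometry of $\Om$ enters, allowing one to push trajectories slightly inward at cost $O(\eps)$ exactly as in \cite{C-DL, S1}; the rest is a routine, if lengthy, adaptation of the classical optimal-control proof.
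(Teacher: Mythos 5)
There is a genuine gap at the identification step $V\equiv u^*$. Your frozen Hamiltonian $\wt H(x,p):=H(x,p,u^*(x))$ has no dependence on the unknown, so Theorem \ref{thm:comp-p} simply does not apply to the stationary state-constraint problem $\wt H(x,Dw)=c(H)$: that comparison theorem needs \ref{itm:C1'} with $\kappa>0$, and its proof uses the strict monotonicity in an essential way (the contradiction comes from the term $\kappa\,[v_1-v_2]$). Worse, the frozen problem at the level $c(H)$ is exactly a critical state-constraint problem for a $u$-independent Hamiltonian (since $u^*$ is a subsolution of $\wt H\le c(H)$, the constant $c(H)$ is not supercritical for $\wt H$), and non-uniqueness of solutions of such critical problems is the very phenomenon the paper is built around (see \eqref{eq:hj-c} and the discussion of the selection problem). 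So "comparison in both directions" cannot be invoked; you can get $u^*\le V$ cheaply from the a.e.\ subsolution inequality along curves, but the inequality $V\le u^*$ — equivalently the supersolution-based construction of near-optimal admissible curves — is precisely the part that your argument leaves unproved. You could only use the paper's uniqueness statement for \eqref{eq:hj-fix-sc} if you first showed that $V$ solves the \emph{contact} equation $H(x,DV,V)=c(H)$, which would already presuppose $V=u^*$.

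The paper avoids this trap by passing through the time-dependent problem: since $u^*$ is stationary, $u(x,t)=u^*(x)$ solves the Cauchy state-constraint problem \eqref{eq:cauchy-sc} with initial datum $u^*$, and for the evolution problem uniqueness and the value-function representation are available from \cite[Theorem X.1]{C-DL} without any strict monotonicity in $u$; the initial condition then pins the solution down and yields \eqref{eq:sol-exp}. If you want to keep your stationary route, you must replace the appeal to Theorem \ref{thm:comp-p} by a direct proof that the supersolution property of $u^*$ on $\ol\Om$ produces, for each $x$ and $t<0$, an admissible curve along which $u^*$ decreases at the Lagrangian rate (the backward characterization of state-constraint supersolutions), which is essentially equivalent to redoing the C-DL argument. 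Your second part (minimizers on $[-k,0]$, diagonal Arzel\`a--Ascoli, lower semicontinuity, and the uniform Lipschitz bound from superlinearity of $L$ together with $\Vert Du^*\Vert_{L^\infty}\le C_l$) matches the paper's construction of the calibrated curve and is fine, modulo the fact that the Lipschitz bound on the limit curve requires the calibration identity plus \ref{itm:C3'}, as in the paper, not just the Tonelli compactness.
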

\begin{proof}
As $u^*(x)\in C(\ol\Om)$ is time-independent, $u(x,t) = u^*(x)$ is also the viscosity solution of the Cauchy problem 
\begin{equation}\label{eq:cauchy-sc}
    \begin{cases}
        \begin{aligned}
            \partial_t u(x,t)+H(x,D u, u)& \leq c(H), &\quad &(x,t)\in \Omega\times (0,T),\\
            \partial_t u(x,t)+H(x,Du, u)& \geq c(H), &\quad &(x,t) \in \overline{\Omega}\times (0,T),\\
            u(x,0)&=u^*(x), 
            &\quad &(x,t)\in \overline{\Omega}\times \{0\},
        \end{aligned}
    \end{cases}
\end{equation}
for any fixed $T\geq 0$. Due to \cite[Theorem X.1, (5)]{C-DL}, the viscosity solution of \eqref{eq:cauchy-sc} is unique and can be expressed by (see also \cite{Mitake2008})
\begin{equation*}
    U(x,t)=\inf_{\substack{\gamma\in {\rm Lip}([0,t],\ol\Om)\\\gamma(t)=x}}\left\{u^*(\gamma(0))+\int_0^t\Big(L\big(\gamma(s),\dot\gamma(s),u^*(\gamma(s))\big)+c(H) \Big)ds\right\}
\end{equation*}
for all $t\in [0,T]$. Consequently, for any $x\in\ol\Om$ and $t\geq 0$,
\begin{equation*}
    u^*(x)= \inf_{\substack{\gamma\in {\rm Lip}([0,t],\ol\Om)\\\gamma(t)=x}}
    \left\{
        u^*(\gamma(0))+\int_0^t\Big(L\big(\gamma(s),\dot\gamma(s),u^*(\gamma(s))\big)+c(H) \Big)ds
    \right\}.
\end{equation*}
Recall that $u^*$ is bounded, $\ol\Om\subset\R^n$ is compact and $H$ is superlinear in $p$, we can always find a sequence of Lipschitz curves $\gamma_n:[0,t]\rightarrow \ol\Om$ with $\gamma_n(t)=x$, such that 
\begin{equation*}
    u^*(x)=\lim_{n\rightarrow +\infty}
    \left[u^*(\gamma_n(0))+\int_0^t\Big( L\big(\gamma_n(s),\dot\gamma_n(s),u^*(\gamma_n(s))\big)+c(H)\Big) ds\right].    
\end{equation*}
That implies 
\begin{equation*}
    \sup_{n\in\mathbb N} \left[ \int_0^t\Big( L\big(\gamma(s),\dot\gamma(s),u^*(\gamma(s))\big)+c(H)\Big) ds\right] \leq 2\Vert u^*\Vert_{L^\infty(\overline{\Omega})}<+\infty.
\end{equation*}
Due to the {\it Dunford-Pettis Theorem} (see \cite[Theorems 2.1,\,2.2 and 3.6]{Buttazzo1998OnedimensionalVP} for instance), there must exist a subsequence $\gamma_{n_k}$ uniformly converging to an absolutely continuous curve $\gamma:[0,t]\rightarrow \ol\Om$, such that 
\begin{equation*}
\begin{aligned}
    & \int_0^t\Big( L\big(\gamma(s),\dot\gamma(s),u^*(\gamma(s))\big)+c(H)\Big) ds\\
    &\qquad\qquad \leq \liminf_{k\rightarrow+\infty}\left[ \int_0^t\Big( L\big(\gamma_{n_k}(s),\dot\gamma_{n_k}(s),u^*(\gamma_{n_k}(s))\big)+c(H) \Big) ds\right].
\end{aligned}
\end{equation*}
This readily implies 
\begin{equation*}
    u^*(x)-u^*(\gamma(0))=\int_0^t
    \Big( 
        L\big(\gamma(s),\dot\gamma(s),u^*(\gamma(s))\big)+c(H)
    \Big) ds.
\end{equation*}
Since $t\geq 0$ is freely chosen above, that implies for any $n\in\mathbb N$ we can find an absolutely continuous curve $\xi_n:[-n,0]\rightarrow\ol\Om$ with $\xi_n(0)=x$, such that 
\begin{equation*}
    u^*(\xi_n(b))-u^*(\xi_n(a))=\int_{a}^b 
    \Big( 
    L\big(\xi_n(s),\dot\xi_n(s),u^*(\xi_n(s))\big)+c(H)
    \Big) ds    
\end{equation*}
for any $-n\leq a\leq b\leq 0$. By a diagonal argument, there exists an absolutely continuous curve $\xi_x : (-\infty, 0] \rightarrow\ol\Om$ with $\xi_x(0) = x$, which is the uniform limit of the curves $\xi_n$ (up to extraction of a subsequence) over any compact subinterval $[a,b]\subset(-\infty,0]$. We now show that $|\dot{\xi}_x(s)|\leq M$ for all $s\in (-\infty, 0]$. Since 
\begin{equation*}
    u^*(\xi_x(b))-u^*(\xi_x(a))=\int_{a}^b\Big( L\big(\xi_x(s),\dot\xi_x(s),u^*(\xi_x(s))\big)+c(H) \Big) ds    
\end{equation*}
for any $a\leq b\leq 0$ and 
\[
L(x,v,u^*(x))+c(H)\geq (C_l+1)|v|-M,\quad\forall (x,v)\in T\ol\Om
\]
for certain constant $M>0$ due to \ref{itm:C3'}, there should be 
\ben
u^*(\xi_x(b))-u^*(\xi_x(a))&\geq &\int_a^b \Big((C_l+1)|\dot\xi_x(s)|-M\Big) ds\\
&\geq &C_l{\rm dist}(\xi_x(a),\xi_x(b))+ \int_a^b |\dot\xi_x(s)| ds-M(b-a).
\een
On the other side, $\Vert D  u^*\Vert _{L^\infty(\ol\Om)}\leq C_l$ due to Theorem \ref{thm:perron}, so we get 
\[
\frac1{b-a}\int_a^b  |\dot\xi_x(s)| ds\leq M,\quad\forall a<b\leq 0.
\]
Taking $b-a\rightarrow 0^+$ we infer that $\xi_x:(-\infty,0]\rightarrow\ol\Om$ is uniformly Lipschitz, i.e.
\begin{equation*}
    |\dot\xi_x(a)|\leq M,\quad{\rm a.e.}\; a\in(-\infty,0].
\end{equation*}
So we finish the proof.
\end{proof}

\begin{defn}
A probability measure $\mu\in\cP (T\ol\Om,\R)$ is called {\bf holonomic} if it satisfies:
\begin{itemize}
    \item[(i)] $\int_{T\ol\Om}|v|d\mu(x,v)<+\infty$;
    \item[(ii)] $\int_{T\ol\Om}\langle D\phi(x),v\rangle d\mu(x,v)=0$ for every $\phi\in \mathrm{C}^1(\ol\Om)$.
\end{itemize}
We denote by $\cC(T\ol\Om,\R)$ the set of all holonomic measures.
\end{defn}

\begin{lem} Assume \ref{itm:C2}, \ref{itm:C3'}, \ref{itm:C4} and $\Omega$ satisfies \ref{itm:C0''}, then 
\begin{equation*}
    \int_{T\overline{\Omega}} L(x,v,0)d\mu \geq -c(H) \qquad\text{for all}\;\mu \in \mathcal{C}(\overline{\Omega}, \R).
\end{equation*}
\end{lem}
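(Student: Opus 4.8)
The plan is to connect the ergodic constant $c(H)$ with the infimum of the Lagrangian action over holonomic measures, which is a standard duality in weak KAM theory but must be adapted to the state-constraint setting. First I would recall that $c(H)$ is characterized as in \eqref{eq:mane-cri}, and by the Perron construction in Theorem \ref{thm:perron} (applied with $u=0$ in place of $u$, noting $H(\cdot,\cdot,0)$ fits the hypotheses via \ref{itm:C1'}) there is a Lipschitz viscosity solution $w\in\mathrm{C}(\ol\Om)$ of the critical equation $H(x,Dw,0)=c(H)$ in the state-constraint sense. The key point is that such a subsolution $w$ satisfies, for a.e.\ $x$ and every $v$, the Fenchel inequality
\[
\langle Dw(x),v\rangle \le H(x,Dw(x),0)+L(x,v,0)\le c(H)+L(x,v,0).
\]

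Next I would integrate this inequality against an arbitrary holonomic measure $\mu\in\cC(T\ol\Om,\R)$. The subtlety here is that $w$ is only Lipschitz, not $C^1$, so the closure condition (ii) in the definition of holonomic measure — stated for $\phi\in C^1(\ol\Om)$ — does not directly give $\int_{T\ol\Om}\langle Dw(x),v\rangle\,d\mu=0$. To handle this I would approximate: mollify $w$ to get $w_\eps\in C^1$ with $\|Dw_\eps\|_{L^\infty}$ uniformly bounded (by $C_l$ up to $o(1)$) and $Dw_\eps\to Dw$ a.e.; apply the closure condition to each $w_\eps$ to get $\int\langle Dw_\eps(x),v\rangle\,d\mu=0$; then pass to the limit using the integrability $\int|v|\,d\mu<\infty$ from (i) and dominated convergence (the integrand is dominated by $C_l|v|$ up to a constant). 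One technical wrinkle is that mollification near $\partial\Om$ requires either extending $w$ as a Lipschitz function to a neighborhood $\cU$ (possible since $w$ is Lipschitz and $\ol\Om$ compact) or using the standard inward-translation mollification argument; either way the uniform gradient bound is preserved up to $o(1)$. After passing to the limit one obtains $\int_{T\ol\Om}\langle Dw(x),v\rangle\,d\mu = 0$.

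Combining the two steps: integrating the Fenchel inequality against $\mu$ and using $\int\langle Dw,v\rangle\,d\mu=0$ yields
\[
0=\int_{T\ol\Om}\langle Dw(x),v\rangle\,d\mu \le c(H)+\int_{T\ol\Om}L(x,v,0)\,d\mu,
\]
which is exactly the claimed inequality $\int_{T\ol\Om}L(x,v,0)\,d\mu\ge -c(H)$. I expect the main obstacle to be the justification that $\int\langle Dw,v\rangle\,d\mu=0$ for the merely-Lipschitz solution $w$ — that is, making the mollification argument rigorous while controlling the gradient bound uniformly near the state-constraint boundary, since the holonomy test functions are required to be $C^1$ on $\ol\Om$. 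The rest is a direct application of the Fenchel--Young inequality and the definition of $c(H)$, so it is essentially bookkeeping.
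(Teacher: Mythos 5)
Your overall strategy (Fenchel--Young plus the holonomy condition, applied to a smooth approximation of a critical subsolution) is the paper's strategy, but the specific way you deploy the approximation has a genuine gap. You propose to apply the Fenchel inequality to the merely Lipschitz solution $w$ at Lebesgue-a.e.\ $x$, and separately to justify $\int_{T\ol\Om}\langle Dw(x),v\rangle\,d\mu=0$ by mollifying, using the holonomy condition on each $w_\eps$, and passing to the limit by dominated convergence. The problem is that both steps conflate ``Lebesgue-a.e.'' with ``$\mu$-a.e.'': mollification gives $Dw_\eps\to Dw$ only Lebesgue-a.e., while the spatial marginal of a holonomic measure $\mu$ can be singular with respect to Lebesgue measure (Mather measures typically concentrate on lower-dimensional sets). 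Dominated convergence therefore does not apply, and indeed $Dw$ need not even be defined $\mu$-a.e., so the quantity $\int\langle Dw,v\rangle\,d\mu$ you want to equal zero is not well-defined in general. The same issue infects the Fenchel step: the pointwise inequality $\langle Dw(x),v\rangle\le c(H)+L(x,v,0)$ holds only at points of differentiability of $w$, a Lebesgue-full but possibly $\mu$-null set.

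The paper's proof avoids this entirely by never integrating anything involving $Dw$ itself. Lemma \ref{lem:smooth_smaller_domain} produces, for each $\delta>0$, a genuine $\mathrm{C}^1$ function $\widehat{w}_\delta$ satisfying $H(x,D\widehat{w}_\delta(x),0)\le c(H)+\mathcal{O}(\delta)$ at \emph{every} point of $\Omega$ (the star-shapedness \ref{itm:C0''} is used to rescale a mollification on a shrunken domain back to all of $\Omega$ --- note that simply extending $w$ to a neighborhood and mollifying, as you suggest, does not yield an approximate subsolution near $\partial\Omega$). One then writes the Fenchel inequality for $\widehat{w}_\delta$ pointwise everywhere, integrates against $\mu$, uses the holonomy condition directly on $\widehat{w}_\delta\in\mathrm{C}^1(\ol\Om)$ to kill the term $\int\langle D\widehat{w}_\delta,v\rangle\,d\mu$ exactly, and obtains the scalar inequality $c(H)+\mathcal{O}(\delta)+\int L(x,v,0)\,d\mu\ge 0$. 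Letting $\delta\to0^+$ in this scalar inequality requires no convergence of gradients against $\mu$ at all. You should restructure your argument in this order.
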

\begin{proof} From the assumptions we have $L(x,v,0)$ is finite and there exists a subsolution $w\in \mathrm{C}(\overline{\Omega})$ to $H(x,Dw(x), 0) \leq c(H)$ in $\Omega$. Using Lemma \ref{lem:smooth_smaller_domain} below, for every $\delta> 0$ we can find $\widehat{w}_\delta\in \mathrm{C}^1(\overline{\Omega})$ such that $\widehat{w}_\delta \to w$ a.e. as $\delta \to 0^+$ and
\begin{equation*}
    H(x,D\widehat{w}_\delta(x), 0) \leq c(H) + \mathcal{O}(\delta) \qquad\text{in}\;\Omega.
\end{equation*}
If $\mu \in \mathcal{C}(\overline{\Omega}, \R)$ then for $(x,v)\in T\overline{\Omega}$ we have
\begin{equation*}
    c(H) + \mathcal{O}(\delta) + L(x,v, 0) \geq H\left(x,D\widehat{w}_\delta(x),0\right) + L(x,v,0) \geq \langle D\widehat{w}_\delta, v\rangle.
\end{equation*}
Taking integration and using the definition of holonomic measure we have
\begin{equation*}
    c(H) +  \mathcal{O}(\delta) +\int_{T\overline{\Omega}} L(x,v,0)\;d\mu \geq 0.
\end{equation*}
Let $\delta\to 0^+$ we obtain the conclusion.
\end{proof}

\begin{defn}[Mather measure]\label{defn:Mather-measures} A holonomic measure minimizing 
\begin{equation}\label{eq:defn-Mather}
    \min_{\mu\in \cC(T\ol\Om,\R)}
    \int_{T\ol\Om}L(x,v,0)\,d\mu=-c(H).   
\end{equation}
is called a {\bf Mather measure}. We denote by $\mathcal{M}$ the set of all Mather measures in $\mathcal{C}(T\overline{\Omega},\R)$ associated with $H(x,p,0)$ on $\Omega$.
\end{defn}

We now give a proof of Theorem \ref{thm:fix} for the convergence of solutions on a fixed domain:

\begin{proof}[Proof of Theorem \ref{thm:fix}] Due to Theorem \ref{thm:perron}, $\{u_\lb\}_{\lb>0}$ are equi-bounded and equi-Lipschitz. By {\it Arzel\`a--Ascoli Theorem}, the accumulating functions of $\{u_\lb\}_{\lb>0}$ in $\mathrm{C}(\overline{\Omega})$ as $\lb\rightarrow 0^+$ always exist, so it remains to prove that the set of accumulating functions is a singleton. \smallskip

Suppose that $\vartheta_0 = \lim_{i\to  +\infty}\vartheta_{\lambda_i}$ is a subsequence limit, we show that $\vartheta_0\in \mathcal{E}$. It is clear that $\vartheta_0$ is a subsolution to \eqref{eq:hj-0} by stability of viscosity solutions. We proceed to show that 
\begin{equation}\label{eq:showing-cond-S}
    \int_{T\overline{\Omega}} \partial_u L(x,v,0)\vartheta_0(x)d\mu(x,v)\geq 0 \qquad\text{for all}\;\mu\in \mathcal{M}.
\end{equation}
Using the subsequence $i\to +\infty$, we have
\begin{align}
    0   &\leq\frac{1}{\lambda_i} \int_{T\ol\Om} \Big(L(x,v,\lambda_i \vartheta_{\lambda_i}(x)) +c(H)\Big) d\mu(x,v)\nonumber \\
        &=\frac1{\lambda_i}\int_{T\ol\Om}\Big( L(x,v,\lambda_i \vartheta_{\lambda_i}(x)) -L(x,v,0)+L(x,v,0)+c(H)\Big)d\mu(x,v) \nonumber \\
        &=\int_{T\ol\Om} \left(\frac{L(x,v,\lambda_i \vartheta_{\lambda_i}(x)) -L(x,v,0)}{\lambda_i}\right) d\mu(x,v)\nonumber 
\end{align}
where we use $\mu\in \mathcal{M}$ for the cancellation of the last terms. Due to \ref{itm:C6}, taking $\lambda_i\rightarrow 0$ in above inequality we obtain \eqref{eq:showing-cond-S}, then $\vartheta_0\in \mathcal{E}$. \smallskip

Next, we show that $\vartheta_0=\sup \mathcal{E}$. Suppose $w\in \cE$ is a viscosity subsolution of \eqref{eq:hj-0}, i.e., $H(x, Dw(x), 0) \leq c(H)$ in $\Omega$, we show that $w\leq \vartheta_0$ in $\Omega$. Using Lemma \ref{lem:smooth_smaller_domain}, for every $\delta > 0$ there exists $\widehat{w}_\delta\in \mathrm{C}^1(\overline{\Omega})$ such that $\widehat{w}_\delta \to w$ a.e. as $\delta \to 0^+$ and
\begin{equation}\label{-2}
    H(x,D\widehat{w}_\delta(x), 0) \leq c(H) + \mathcal{O}(\delta) \qquad\text{in}\;\Omega.
\end{equation}
Let $x\in \Omega$, thanks to Proposition \ref{prop:opt-formula}, we can find a Lipschitz curve $\gamma_x^\lb:(-\infty, 0]\rightarrow \ol\Omega$ ending with $\gamma_x^\lb(0) = x$ such that
\begin{align}\label{eq:d/dt-u-lambda}
    \frac{\mbox{d}}{\mbox{d}t}\left(\vartheta_{\lambda}(\gamma_x^\lb(t))\right)
    &=L\left(\gamma_x^\lb(t),\dot\gamma_x^\lb(t),\lb \vartheta_{\lambda}(\gamma_x^\lb(t))\right)+c(H)\\
    &=L\left(\gamma_{x}^\lb(t),\dot\gamma_{x}^\lb(t),0\right) +c(H)+\lb \vartheta_{\lambda}\big(\gamma_{x}^\lb(t)\big)\partial_u L\left(\gamma_{x}^\lb(t),\dot\gamma_{x}^\lb(t),0\right)+\Theta_x^\lb(t) \nonumber
\end{align}
for a.e. $t\in(-\infty,0]$, where $\frac{\mbox{d}}{\mbox{dt}}\gamma^\lambda_x(t) = \dot{\gamma}^\lambda_x(t)$ and
\begin{equation*}
\begin{aligned}
    \Theta_x^\lb(t):=
    L\left(\gamma_x^\lb(t),\dot\gamma_x^\lb(t),\lb \vartheta_{\lambda}\big(\gamma_x^\lb(t)\big)\right) &-L\left(\gamma_{x}^\lb(t),\dot\gamma_{x}^\lambda(t),0\right)\\
    &-\lb \vartheta_{\lambda}\big(\gamma_{x}^\lambda(t)\big)\, \partial_u L\left(\gamma_{x}^\lb(t),\dot\gamma_{x}^\lambda(t),0\right).
\end{aligned}
\end{equation*}
Let us define
\begin{equation}\label{eq:def_alpha(t)}
    \alpha^\lambda_x(t):=\int_0^t\partial_u L\left(\gamma_{x}^\lb(s),\dot\gamma_{x}^\lb(s),0\right)ds, \qquad t\in (-\infty, 0]
\end{equation}
then for all $t\in (-\infty, 0]$ there holds
\begin{equation*}
    \frac{\mbox{d}}{\mbox{d}t} \left(\alpha^\lambda_x(t)\right) = \partial_u L\left(\gamma_{x}^\lb(t),\dot\gamma_{x}^\lb(t),0\right), \qquad\text{and}\qquad \lim_{t\to -\infty} \alpha_x^\lambda(t) = +\infty
\end{equation*}
and it grows at least linearly due the the fact that $|\dot{\gamma}^\lambda_x(\cdot)|$ is uniformly bounded and \ref{itm:C1'}. We observe that $\Theta^\lambda_x(t)/\lambda \to 0$ uniformly in $t\leq 0$ as $\lb\rightarrow 0^+$ due to \ref{itm:C6}, therefore
\begin{equation}\label{eq:vanish_Theta}
    \lim_{\lambda \to 0^+}\int_{-\infty}^0 \Theta_x^\lb(t) e^{-\lb\alpha_x^\lb(t)} dt\rightarrow 0
\end{equation}
thanks to Lemma \ref{lem:est_alpha(t)}. From the definition of Legendre's transform and the smoothness of $\widehat{w}_\delta$ we have
\begin{align*}
    H\left(
        \gamma^\lambda_x(t),
        D\widehat{w}_\delta\big(\gamma^\lambda_x(t)\big),
        0
    \right) 
    + 
    L\left(
        \gamma^\lambda_x(t),
        \dot{\gamma}^\lambda_x(t),
        0
    \right)
    \geq 
    \left \langle \dot{\gamma}^\lambda_x(t), D\widehat{w}_\delta\big(\gamma^\lambda_x(t)\big)\right\rangle = \frac{\mbox{d}}{\mbox{d}t}
    \left(
        \widehat{w}_\delta(\gamma^\lb_{x}(t))
    \right)
\end{align*}
for all $t\in (\infty, 0]$. Therefore
\begin{align*}
    \frac{\mbox{d}}{\mbox{d}t}
    \left(
        \widehat{w}_\delta(\gamma^\lb_{x}(t))
    \right)
    \leq
    \frac{\mbox{d}}{\mbox{d}t}
    \left(
        u_{\lambda}(\gamma^\lb_{x}(t))
    \right)
    -\lb u_{\lambda}(\gamma_{x}^\lb(t))\partial_u L\left(\gamma_{x}^\lb(t),\dot\gamma_{x}^\lb(t),0\right)
    -\Theta_x^\lb(t) + \mathcal{O}(\delta)
\end{align*}
for a.e. $t\in (-\infty, 0]$. In other words, we have
\begin{equation*}
    \frac{\mbox{d}}{\mbox{d}t}
        \left(\vartheta_\lambda(\gamma^\lambda_x(t))\right) 
        - 
        \lambda \vartheta_\lambda(\gamma^\lambda_x(t)) 
        \frac{\mbox{d}}{\mbox{d}t}
            \left(\alpha^\lambda_x(t)\right)
        -\Theta^\lambda_x(t) 
        + \mathcal{O}(\delta) 
            \geq 
    \frac{\mbox{d}}{\mbox{d}t}
    \left(
        \widehat{w}_\delta(\gamma^\lb_{x}(t))
    \right)
\end{equation*}
and then 
\begin{align*}
    & e^{-\lambda \alpha^\lambda_x(t)}\frac{\mbox{d}}{\mbox{d}t}\left(\vartheta_\lambda(\gamma^\lambda_x(t))\right) - e^{-\lambda \alpha^\lambda_x(t)} \lambda \vartheta_\lambda\big(\gamma^\lambda_x(t)\big) \left(\frac{\mbox{d}}{\mbox{d}t}\left(\alpha^\lambda_x(t)\right)\right) \\
    &\qquad\qquad\qquad\qquad\qquad\qquad \geq e^{-\lambda \alpha^\lambda_x(t)}\left(\frac{\mbox{d}}{\mbox{d}t} \left(\widehat{w}_\delta(\gamma^\lambda_x(t))\right) + \Theta^\lambda_x(t) - \mathcal{O}(\delta)\right).
\end{align*}
Consequently, for a.e. $t\in (-\infty, 0]$, using the fact that $\alpha^\lambda_x(t)\to +\infty$ as $t\to -\infty$ we have
\begin{equation*}
    \frac{\mbox{d}}{\mbox{d}t}
    \Big(
        e^{-\lb\alpha_x^\lb(t)}\vartheta_\lambda(\gamma_x^\lb(t))
    \Big)
        \geq 
    e^{-\lb\alpha_x^\lb(t)}
    \left(
        \frac{\mbox{d}}{\mbox{d}t}
            \left(
                \widehat{w}_\dt(\gamma_x^\lb(t))
            \right)
        +\Theta_x^\lb(t)-\mathcal{O}(\delta)
    \right).
\end{equation*}
Integrating both sides with respect to $t\in(-\infty,0]$, we get 
\begin{equation}\label{eq:ineq}
    \vartheta_\lambda(x) 
        \geq 
    \widehat{w}_\dt(x)  
    -
    \int_{-\infty}^0
        \widehat{w}_\dt\big(\gamma_x^\lb(t)\big)
        \frac{\mbox{d}}{\mbox{d}t}
        \left(
            e^{-\lb\alpha_x^\lb(t)}
        \right) dt
    +
    \int_{-\infty}^0 
    \left(
        \Theta_x^\lb(t)-\mathcal{O}(\delta)
    \right)e^{-\lb\alpha_x^\lb(t)} dt.
\end{equation}
From Lemma \ref{lem:est_alpha(t)} there exists a positive constant $\mathcal{K}$ such that
\begin{equation}\label{eq:ratio-3}
 -\int_{-\infty}^0 e^{-\lambda \alpha^\lambda_x(t)}\mathcal{O}(\delta)\;dt \geq -\frac{\mathcal{O}(\delta)}{\mathcal{K}\lambda}.
\end{equation}
For $\mu \in \mathcal{M}$, we define $\mu_x^\lb\in\cP(T\ol\Omega,\R)$ by 
\begin{equation}\label{eq:new_measures}
    \displaystyle \int_{T\overline{\Omega}} f(y,v)d\mu_x^\lb(y,v):=
    \frac{
        \displaystyle \int_{-\infty}^0f(\gamma_x^\lb(t),\dot\gamma_x^\lb(t))e^{-\lb\alpha_x^\lb(t)}dt
    }
    {
        \displaystyle
        \int_{-\infty}^0e^{-\lb\alpha_x^\lb(t)}dt
    }, \qquad \text{for}\; f\in \mathrm{C}_c(T\ol\Omega).
\end{equation}
The second term in \eqref{eq:ineq} becomes
\begin{equation*}
-\int_{-\infty}^0w_\delta(\gamma_x^\lb(t))\frac{\mbox{d}}{\mbox{d}t}\left(e^{-\lb\alpha_x^\lb(t)}\right) dt 
= \lambda \left(\int_{-\infty}^0e^{-\lb\alpha_x^\lb(t)}dt\right) 
    \left(
        \int_{T\overline{\Omega}} \widehat{w}_\delta(y)
        \partial_uL
        \left(y,v,0\right)d\mu^\lambda_x(y,v)
    \right).
\end{equation*}
Observe that 
\begin{equation*}
\begin{aligned}
    \int_{T\overline{\Omega}}
    \partial_u L(y,v,0)d\mu_x^\lb(y,v) = \frac{\displaystyle\int_{-\infty}^0 \frac{\mbox{d}}{\mbox{d}t} \left(\alpha^\lambda_x(t)\right)e^{-\lambda \alpha^\lambda_x(t)}dt}{\displaystyle\int_{-\infty}^0 e^{-\lambda \alpha^\lambda_x(t)}dt}
    =\frac{1}{\displaystyle-\lb \int_{ -\infty}^0e^{-\lb\alpha_x^\lb(t)}dt}.
\end{aligned}
\end{equation*}
Therefore,
\begin{equation}\label{eq:ratio-1}
    -\int_{-\infty}^0
    \widehat{w}_\delta
    \big(\gamma_x^\lb(t)\big)
    \frac{\mbox{d}}{\mbox{d}t}
    \left(
        e^{-\lb\alpha_x^\lb(t)}
    \right) dt = 
    - \frac
    {
        \displaystyle\int_{T\overline{\Omega}} \widehat{w}_\delta(y)\partial_uL\left(y,v,0\right)d\mu^\lambda_x(y,v)
    }
    {
        \displaystyle\int_{T\overline{\Omega}} \partial_uL\left(y,v,0\right)d\mu^\lambda_x(y,v)
    }.
\end{equation}
Due to the following Lemma \ref{lem:mat-mea}, any accumulating measure  $\mu_x^0$ of $\{\mu_x^{\lb_n}\}_{n\in\N}$ (in the sense of weak topology) has to be contained in $\cM$, therefore along the sequence $\lambda_n$ we obtain from \eqref{eq:ratio-1} that
\begin{equation}\label{eq:ratio-2}
    \lim_{n\to \infty}  
    \frac
    {
        \displaystyle\int_{T\overline{\Omega}} \widehat{w}_\delta(y)\partial_uL\left(y,v,0\right)d\mu^{\lambda_n}_x(y,v)
    }
    {
        \displaystyle\int_{T\overline{\Omega}} \partial_uL\left(y,v,0\right)d\mu^{\lambda_n}_x(y,v)
    } = 
    \frac
    {
        \displaystyle\int_{T\overline{\Omega}} \widehat{w}_\delta(y)\partial_uL\left(y,v,0\right)d\mu^{0}_x(y,v)
    }
    {
        \displaystyle\int_{T\overline{\Omega}} \partial_uL\left(y,v,0\right)d\mu^{0}_x(y,v)
    }\leq 0
\end{equation}
where we use $\partial_uL(y,v,0) \leq 0$ and $\int_{T\overline{\Omega}} \widehat{w}_\delta(y)\partial_uL\left(y,v,0\right)d\mu^{\lambda_n}_x(y,v) \geq 0$ since $\mu^0_x \in \mathcal{M}$. Using \eqref{eq:ineq}, \eqref{eq:ratio-3} and \eqref{eq:ratio-1} we have
\begin{equation*}
     \vartheta_\lambda(x)  \geq \widehat{w}_\delta(x) + \int_{-\infty}^0 e^{-\lambda\alpha^\lambda_x(t)}\Theta^\lambda_x(t)\;dt - \frac{\mathcal{O}(\delta)}{\mathcal{K}\lambda} - \frac
    {
        \displaystyle\int_{T\overline{\Omega}} \widehat{w}_\delta(y)\partial_uL\left(y,v,0\right)d\mu^\lambda_x(y,v)
    }
    {
        \displaystyle\int_{T\overline{\Omega}} \partial_uL\left(y,v,0\right)d\mu^\lambda_x(y,v)
    }.
\end{equation*}
Taking $\delta \to 0^+$ we deduce that 
\begin{equation*}
     \vartheta_\lb(x)  \geq w(x)  + \int_{-\infty}^0 e^{-\lambda\alpha^\lambda_x(t)}\Theta^\lambda_x(t)\;dt  - \frac
    {
        \displaystyle\int_{T\overline{\Omega}} w(y)\partial_uL\left(y,v,0\right)d\mu^\lambda_x(y,v)
    }
    {
        \displaystyle\int_{T\overline{\Omega}} \partial_uL\left(y,v,0\right)d\mu^\lambda_x(y,v)
    } 
\end{equation*}
Taking $n\rightarrow+\infty$ for $\lambda_n$  and using \eqref{eq:vanish_Theta} and \eqref{eq:ratio-2},  we obtain
\begin{equation*}
    \vartheta_0(x)  \geq w(x) - \lim_{n\to \infty}  \frac
    {
        \displaystyle\int_{T\overline{\Omega}} w(y)\partial_uL\left(y,v,0\right)d\mu^{\lambda_n}_x(y,v)
    }
    {
        \displaystyle\int_{T\overline{\Omega}} \partial_uL\left(y,v,0\right)d\mu^{\lambda_n}_x(y,v)
    } \geq w(x).
\end{equation*}
Since $x\in \Omega$ is arbitrary, then $\vartheta_0(x)\geq w(x)$ in $\Omega$ and the proof is complete.    
\end{proof}

\begin{lem}\label{lem:mat-mea} In the context of Theorem \ref{thm:fix}, if $x\in \overline{\Omega}$ then any accumulating measure of $\mu_x^\lb$ as $\lb\rightarrow 0^+$ is a Mather measure. 
\end{lem}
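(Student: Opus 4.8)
The plan is to verify the three conditions defining a Mather measure for an arbitrary weak limit. Fix $x\in\ol\Om$ and a sequence $\lb_n\to 0^+$ with $\mu_x^{\lb_n}\rightarrow\mu_x^0$ weakly for some $\mu_x^0\in\cP(T\ol\Om,\R)$. The first condition — that $\mu_x^0$ is a probability measure with $\int_{T\ol\Om}|v|\,d\mu_x^0<+\infty$ — comes essentially for free from a compactness remark: by Proposition \ref{prop:opt-formula} together with the uniform gradient bound on $\vartheta_\lb$ used in the proof of Theorem \ref{thm:fix}, the calibrated curves $\gamma_x^\lb$ are uniformly Lipschitz, say $|\dot\gamma_x^\lb|\leq M$ for all small $\lb$, so every $\mu_x^\lb$ is supported in the fixed compact set $Q:=\ol\Om\times\overline{B(0,M)}\subset T\ol\Om$. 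Hence the family is precompact in the weak topology, every accumulation point is a probability measure carried by $Q$ with finite first moment, and the integrands $(y,v)\mapsto\langle D\phi(y),v\rangle$, $(y,v)\mapsto L(y,v,0)$, $(y,v)\mapsto\phi(y)\,\partial_u L(y,v,0)$ are all continuous on $Q$, so no tightness issue arises when passing to limits. It is also convenient to record that $Z_\lb:=\int_{-\infty}^0 e^{-\lb\alpha_x^\lb(t)}\,dt$ satisfies $Z_\lb^{-1}=\cO(\lb)$: since $\partial_u L(\cdot,\cdot,0)$ is bounded on $Q$ (bounded below by $-\kappa$ via \ref{itm:C1'}, and bounded above in view of \ref{itm:C6} and the continuity of $H$), the identity $\int_{T\ol\Om}\partial_u L(y,v,0)\,d\mu_x^\lb=-1/(\lb Z_\lb)$ from the proof of Theorem \ref{thm:fix} forces $\lb Z_\lb$ to stay in a fixed interval $[1/C,1/\kappa]$ (cf.\ Lemma \ref{lem:est_alpha(t)}).

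For the closedness (holonomicity) condition, I would fix $\phi\in \mathrm{C}^1(\ol\Om)$ and integrate $\frac{\mbox{d}}{\mbox{d}t}\big(\phi(\gamma_x^\lb(t))e^{-\lb\alpha_x^\lb(t)}\big)$ over $(-\infty,0]$. The boundary term at $-\infty$ vanishes because $\phi$ is bounded and $\alpha_x^\lb(t)\to+\infty$, leaving
\[
\phi(x)=Z_\lb\left(\int_{T\ol\Om}\langle D\phi(y),v\rangle\,d\mu_x^\lb-\lb\int_{T\ol\Om}\phi(y)\,\partial_u L(y,v,0)\,d\mu_x^\lb\right).
\]
Dividing by $Z_\lb$, the term $\phi(x)/Z_\lb\to 0$ since $Z_\lb\to+\infty$, and $\lb\int_{T\ol\Om}\phi\,\partial_u L(\cdot,\cdot,0)\,d\mu_x^\lb\to 0$ because the integrand is bounded on $Q$; passing to the limit along $\lb_n$ gives $\int_{T\ol\Om}\langle D\phi(y),v\rangle\,d\mu_x^0=0$ for every $\phi\in \mathrm{C}^1(\ol\Om)$, so $\mu_x^0\in\cC(T\ol\Om,\R)$.

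For the minimization condition, I would instead integrate $\frac{\mbox{d}}{\mbox{d}t}\big(\vartheta_\lb(\gamma_x^\lb(t))e^{-\lb\alpha_x^\lb(t)}\big)$ over $(-\infty,0]$; by \eqref{eq:d/dt-u-lambda} the derivative equals $e^{-\lb\alpha_x^\lb(t)}\big(L(\gamma_x^\lb(t),\dot\gamma_x^\lb(t),0)+c(H)+\Theta_x^\lb(t)\big)$, and the boundary term at $-\infty$ again vanishes since $\vartheta_\lb$ is uniformly bounded, whence
\[
\int_{T\ol\Om}\big(L(y,v,0)+c(H)\big)\,d\mu_x^\lb=\frac{1}{Z_\lb}\left(\vartheta_\lb(x)-\int_{-\infty}^0 e^{-\lb\alpha_x^\lb(t)}\Theta_x^\lb(t)\,dt\right).
\]
The right-hand side tends to $0$: $Z_\lb^{-1}=\cO(\lb)$, $\vartheta_\lb(x)$ is uniformly bounded, and $\int_{-\infty}^0 e^{-\lb\alpha_x^\lb(t)}\Theta_x^\lb(t)\,dt\to 0$ by \eqref{eq:vanish_Theta}. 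Since the left-hand side converges to $\int_{T\ol\Om}(L(y,v,0)+c(H))\,d\mu_x^0$ along $\lb_n$ by weak convergence, we obtain $\int_{T\ol\Om}L(y,v,0)\,d\mu_x^0=-c(H)$, which combined with the closedness just established shows that $\mu_x^0$ realizes the minimum in \eqref{eq:defn-Mather}, i.e.\ $\mu_x^0\in\cM$. I expect the only genuinely delicate ingredients to be the rate $Z_\lb^{-1}=\cO(\lb)$ and the uniform smallness of the perturbation $\Theta_x^\lb$; both are supplied by \ref{itm:C1'}, \ref{itm:C6} and Lemma \ref{lem:est_alpha(t)}, after which the argument is routine bookkeeping, so I anticipate no serious obstacle.
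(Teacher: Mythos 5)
Your proof is correct and takes essentially the same route as the paper: the same renormalized measures $\mu_x^\lb$, the same product-rule/integration-by-parts identities for $\phi(\gamma_x^\lb(t))e^{-\lb\alpha_x^\lb(t)}$ and $\vartheta_\lb(\gamma_x^\lb(t))e^{-\lb\alpha_x^\lb(t)}$ (the paper phrases the second step with $L(y,v,\lb\vartheta_\lb(y))$ and dominated convergence, which is the same algebra as your exact identity with the $\Theta_x^\lb$ error and \eqref{eq:vanish_Theta}), and the same estimates from Lemma \ref{lem:est_alpha(t)}. The only nitpick is a swapped attribution in your parenthetical: \ref{itm:C1'} gives the upper bound $\partial_u L(\cdot,\cdot,0)\leq-\kappa$ (whence $\alpha_x^\lb(t)\to+\infty$ and $\lb Z_\lb\leq 1/\kappa$), while the lower bound $\partial_u L(\cdot,\cdot,0)\geq -C$ on the compact set comes from continuity via \ref{itm:C6}; your stated conclusion $\lb Z_\lb\in[1/C,1/\kappa]$ is nonetheless exactly right.
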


\begin{proof} Suppose $\lim_{i\to +\infty}\mu_x^{\lambda_i}=\mu_x^0$ is an accumulating measure, we show that $\mu_x^0$ is holonomic and is a minimizing measure in the sense of Definition \ref{eq:defn-Mather}. Firstly, for any function $\phi\in C^1(\ol\Om)$, by definition as in equation \eqref{eq:new_measures} we have
\begin{align*}
    \int_{T\overline{\Omega}} \langle\nabla\phi,v\rangle d\mu_x^\lambda 
    &=
    \frac{
        \displaystyle
        \int_{-\infty}^0 \frac{\mathrm{d}}{\mathrm{d}t} \phi\big(\gamma^\lambda_x(t)\big) e^{-\lambda \alpha^\lambda_x(t)}dt
    }
    {
        \displaystyle
        \int_{-\infty}^0 e^{-\lambda \alpha^\lambda_x(t)}dt.
    } 
    = \frac{
        \displaystyle
        \phi\big(\gamma^\lambda_x(t)\big)e^{-\lambda \alpha^\lambda_x(t)}\Big|_{-\infty}^0 - 
        \int_{-\infty}^0  \phi\big(\gamma^\lambda_x(t)\big) \frac{\mathrm{d}}{\mathrm{d}t}\left(e^{-\lambda \alpha^\lambda_x(t)}\right)dt
    }
    {
        \displaystyle
        \int_{-\infty}^0 e^{-\lambda \alpha^\lambda_x(t)}dt.
    } \\
    &= \frac{
        \displaystyle
        \phi(x)- 
        \int_{-\infty}^0  \phi\big(\gamma^\lambda_x(t)\big) \frac{\mbox{d}}{\mbox{d}t}\left(e^{-\lambda \alpha^\lambda_x(t)}\right)dt
    }
    {
        \displaystyle
        \int_{-\infty}^0 e^{-\lambda \alpha^\lambda_x(t)}dt.
    } \\
    &\leq \lambda\mathcal{K} \Vert \phi\Vert_{L^\infty(\overline{\Omega})} \left(
        1+\int_{-\infty}^0 \frac{\mbox{d}}{\mbox{d}t}\left(e^{-\lambda \alpha^\lambda_x(t)}\right)dt
    \right) = 2\lambda\mathcal{K} \Vert \phi\Vert_{L^\infty(\overline{\Omega})}
\end{align*}
due to Lemma \ref{lem:est_alpha(t)}. Let $\lambda=\lambda_i$ and $i\rightarrow+\infty$ we have $\int_{T\overline{\Omega}} \langle\nabla\phi,v\rangle d\mu_x^0 = 0$ and thus $\mu_x^0$ is holonomic. Secondly, we have
\begin{equation*}
\begin{aligned}
    &\int_{T\overline{\Omega}}\Big(L(y,v,\lambda \vartheta_\lambda(y))+c(H)\Big) d\mu_x^\lb= 
    \frac{ 
        \displaystyle
        \int_{-\infty}^0\left[L\left(\gamma_x^\lambda(t),\dot\gamma_x^\lambda(t),\lambda \vartheta_\lambda(\gamma_x^\lambda(t))\right)+c(H)\right]e^{-\lb\alpha_x^\lambda(t)}dt
    }
    {
        \displaystyle
        \int_{-\infty}^0e^{-\lambda\alpha_x^\lambda(t)}dt
    }\\
    & \qquad\qquad  = 
    \frac
    {
        \displaystyle 
        \int_{-\infty}^0 \frac{\mbox{d}}{\mbox{d}t}\left(\vartheta_\lambda\big(\gamma_x^\lambda(t)\big)\right)e^{-\lb\alpha_x^\lb(t)} dt
    }
    {
        \displaystyle 
        \int_{-\infty}^0 e^{-\lb\alpha_x^\lb(t)}\;dt
    }\\
    &\qquad\qquad = 
    \frac
    {
        \displaystyle 
        \vartheta_\lambda\big(\gamma^\lambda_x(t)\big)e^{-\lambda \alpha^\lambda_x(t)}\Big|_{-\infty}^0 - \int_{-\infty}^0 \vartheta_\lambda\big(\gamma^\lambda_x(t)\big) \left(\frac{\mbox{d}}{\mbox{d}t} e^{-\lambda \alpha^\lambda_x(t)}\right)dt
    }
    {
        \displaystyle 
        \int_{-\infty}^0 e^{-\lb\alpha_x^\lb(t)}\;dt
    } \leq 2\lambda \mathcal{K}\Vert \vartheta_\lambda \Vert_{L^\infty(\overline{\Omega})} 
\end{aligned}
\end{equation*}
where we use \eqref{eq:d/dt-u-lambda}, integration by parts and Lemma \ref{lem:est_alpha(t)} below. Since $\{\vartheta_\lambda\}_{\lambda\in(0,1]}$ are uniformly bounded, by the {\it Dominated Convergence Theorem} $\int_{T\ol\Om}\left(L(y,v,0)+c(H)\right) d\mu_x^0=0$ which completes the proof.
\end{proof}

\begin{lem}\label{lem:est_alpha(t)} Assume \ref{itm:C0''}, \ref{itm:C1}, \ref{itm:C2}--\ref{itm:C6}, there exist positive constants $\mathcal{K}_1, \mathcal{K}_2$ such that
\begin{equation*}
   \mathcal{K}_1 |t| \leq \alpha(t) \leq \mathcal{K}_2 |t|, \qquad t\in (-\infty,0]
\end{equation*}
where the function $\alpha_x^\lambda(t)$ is defined in \eqref{eq:def_alpha(t)}. As a consequence
\begin{equation}\label{eq:est_exp}
    \frac{\mathrm{d}}{\mathrm{d}t}\left(e^{-\lambda \alpha^\lambda_x(t)}\right) \geq 0 \qquad \text{and}\qquad \frac{1}{\lambda \mathcal{K}_2} \leq \int_{-\infty}^0 e^{-\lambda  \alpha^\lambda_x(t)}dt \leq \frac{1}{\lambda \mathcal{K}_1} \qquad t\in (-\infty, 0].
\end{equation}
\end{lem}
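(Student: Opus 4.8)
The plan is to reduce the whole statement to a two-sided pointwise bound on $-\partial_u L(x,v,0)$ along the optimal curves, and then integrate. First I would recall that the backward calibrated curve $\gamma_x^\lambda:(-\infty,0]\to\overline{\Omega}$ attached to $\vartheta_\lambda$ --- produced exactly as in the proof of Proposition~\ref{prop:opt-formula}, applied to the rescaled contact Hamiltonian $(x,p,u)\mapsto H(x,p,\lambda u)$ --- is uniformly Lipschitz: since the family $\{\vartheta_\lambda\}_{\lambda>0}$ is equi-Lipschitz with a common constant, there is $M>0$, independent of $x\in\overline{\Omega}$ and of $\lambda>0$, with $|\dot\gamma_x^\lambda(s)|\le M$ for almost every $s\le 0$. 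Hence $(\gamma_x^\lambda(s),\dot\gamma_x^\lambda(s))$ stays in the compact set $K:=\{(x,v):x\in\overline{\Omega},\ |v|\le M\}$, and it suffices to produce constants $\mathcal{K}_1,\mathcal{K}_2>0$ with
\[
-\mathcal{K}_2\ \le\ \partial_u L(x,v,0)\ \le\ -\mathcal{K}_1\qquad\text{for all }(x,v)\in K.
\]
Granting this, for $t\le 0$ we get $\mathcal{K}_1|t|\le -\int_t^0\partial_u L(\gamma_x^\lambda(s),\dot\gamma_x^\lambda(s),0)\,ds=\alpha_x^\lambda(t)\le\mathcal{K}_2|t|$, which is the asserted inequality.

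For the upper estimate $\partial_u L(x,v,0)\le-\mathcal{K}_1$ I would push the monotonicity \ref{itm:C1'} through the Legendre transform: from $u\mapsto H(x,p,u)-\kappa u$ nondecreasing it follows that $u\mapsto L(x,v,u)+\kappa u=\sup_{p}\big(\langle p,v\rangle-(H(x,p,u)-\kappa u)\big)$ is nonincreasing, hence $\kappa+\partial_u L(x,v,0)\le 0$; take $\mathcal{K}_1=\kappa$. For the lower estimate $\partial_u L(x,v,0)\ge-\mathcal{K}_2$ I would exploit superlinearity: by \ref{itm:C3'} together with the continuity of $H$ and the compactness of $K$, every maximizer $p_*=p_*(x,v)$ in the definition of $L(x,v,0)$ satisfies $|p_*|\le R_0$ for a uniform $R_0>0$. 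Then for $u>0$,
\[
L(x,v,u)-L(x,v,0)\ \ge\ \big(\langle p_*,v\rangle-H(x,p_*,u)\big)-\big(\langle p_*,v\rangle-H(x,p_*,0)\big)\ =\ -\big(H(x,p_*,u)-H(x,p_*,0)\big),
\]
and by \ref{itm:C6} the right-hand side is $\ge -u\,\partial_u H(x,p_*,0)-u\,\varpi_3(R_0,u)$. Dividing by $u$ and letting $u\to 0^+$ yields $\partial_u L(x,v,0)\ge-\partial_u H(x,p_*,0)$. Since the difference quotients $(H(\cdot,\cdot,u)-H(\cdot,\cdot,0))/u$ are continuous in $(x,p)$ and, by \ref{itm:C6}, converge uniformly on $\{x\in\overline{\Omega},\ |p|\le R_0\}$ as $u\to 0^+$, their limit $\partial_u H(\cdot,\cdot,0)$ is continuous and therefore bounded above on that compact set by some $\mathcal{K}_2>0$. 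This establishes the claimed two-sided bound.

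Finally I would read off the consequences in \eqref{eq:est_exp}. Differentiating, $\tfrac{d}{dt}\big(e^{-\lambda\alpha_x^\lambda(t)}\big)=-\lambda\,\partial_u L(\gamma_x^\lambda(t),\dot\gamma_x^\lambda(t),0)\,e^{-\lambda\alpha_x^\lambda(t)}\ge\lambda\mathcal{K}_1\,e^{-\lambda\alpha_x^\lambda(t)}\ge 0$; and from $\mathcal{K}_1|t|\le\alpha_x^\lambda(t)\le\mathcal{K}_2|t|$ and monotonicity of the exponential, $e^{-\lambda\mathcal{K}_2|t|}\le e^{-\lambda\alpha_x^\lambda(t)}\le e^{-\lambda\mathcal{K}_1|t|}$, so integrating over $t\in(-\infty,0]$ gives $\tfrac{1}{\lambda\mathcal{K}_2}\le\int_{-\infty}^0 e^{-\lambda\alpha_x^\lambda(t)}\,dt\le\tfrac{1}{\lambda\mathcal{K}_1}$.

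The only genuinely delicate point, and the one I expect to be the main obstacle, is the lower bound $\partial_u L(x,v,0)\ge-\mathcal{K}_2$, i.e.\ controlling the rate of decay of $L$ in $u$ uniformly on $K$. It hinges on two things: confining the Legendre maximizers to a fixed compact set via the uniform superlinearity \ref{itm:C3'}, and upgrading the merely pointwise existence of $\partial_u H(\cdot,\cdot,0)$ asserted in \ref{itm:C6} to \emph{locally uniform} convergence of the difference quotients, which is what delivers continuity and hence the uniform bound. The remaining steps --- the Legendre monotonicity transfer, the squeezing of $\alpha_x^\lambda$, and the elementary exponential integrals --- are routine.
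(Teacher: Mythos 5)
Your proof is correct and follows essentially the same route as the paper's: both reduce the statement to a uniform two-sided bound $-\mathcal{K}_2 \le \partial_u L(\cdot,\cdot,0) \le -\mathcal{K}_1 < 0$ on the compact set swept by the uniformly Lipschitz optimal curves from Proposition \ref{prop:opt-formula}, then integrate to squeeze $\alpha^\lambda_x(t)$ between $\mathcal{K}_1|t|$ and $\mathcal{K}_2|t|$ and read off \eqref{eq:est_exp}. The only difference is one of detail: where the paper obtains the bound by asserting negativity of $\partial_u L$ (from monotonicity) together with continuity of $(y,v)\mapsto \partial_u L(y,v,0)$ (from \ref{itm:C6}) and compactness, you derive it explicitly, getting $\mathcal{K}_1=\kappa$ from \ref{itm:C1'} via the Legendre transform and $\mathcal{K}_2$ by confining the Legendre maximizers with \ref{itm:C3'} and bounding $\partial_u H(\cdot,\cdot,0)$, which is a more explicit rendering of the same step.
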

\begin{proof} From \ref{itm:C1}, $\partial_u L(y,v,0)$ is actually negative for $(y,v)\in T^*_x\mathcal{U}$. From \ref{itm:C6}, $(y,v)\mapsto \partial_u L(y,v,0)$ is continuous on any compact set of $T\cU$. Since $\gamma^\lambda_x(\cdot)$ is bounded and has a uniform Lipschitz bound (Proposition \ref{prop:opt-formula}), we deduce that
\begin{equation*}
    -\mathcal{K}_2 \leq \partial_u L\left(\gamma^\lambda_x(t),\dot{\gamma}^\lambda_x(t),0\right) \leq -\mathcal{K}_1 ,  \qquad t\in (-\infty, 0].   
\end{equation*}
By the definition of $\alpha^\lambda_x(t)$ from \eqref{eq:def_alpha(t)}, we have $\mathcal{K}_1 |t| \leq \alpha(t) \leq \mathcal{K}_2 |t|$ for $t\leq 0$, and also
\begin{equation}\label{eq:integ}
\begin{aligned}
    \frac{\mbox{d}}{\mbox{d}t}\left(e^{-\lambda \alpha^\lambda_x(t)}\right) 
    &= e^{-\lambda \alpha^\lambda_x(t)}(-\lambda) \left(\frac{\mathrm{d}}{\mathrm{d}t}\alpha_x^\lambda(t)\right)\\
    &=e^{-\lambda \alpha^\lambda_x(t)}\Big( (-\lambda) \partial_u L\big(\gamma^\lambda_x(t), \dot{\gamma}^\lambda_x(t),0\big) \Big) \in \left[\lambda k e^{-\lambda \alpha^\lambda_x(t)}, \lambda\mathcal{K}e^{-\lambda \alpha^\lambda_x(t)}\right].
\end{aligned}
\end{equation}
As $\alpha^\lambda_x(t)\to +\infty$ as $t\to -\infty$, we have $\int_{-\infty}^0 \frac{\mbox{d}}{\mbox{d}t}\left(e^{-\lambda \alpha^\lambda_x(t)}\right) dt = 1$. Using that in \eqref{eq:integ} after integrating over $t\in (-\infty, 0]$ we obtain \eqref{eq:est_exp}.
\end{proof}

The following technical Lemma concerns when we can approximate a viscosity subsolution of $H(x, Dw(x), 0) \leq c$ in $\Omega$ by a smooth approximated subsolution.

\begin{lem}\label{lem:smooth_smaller_domain} Let $H(x,p,0)$ be a Hamiltonian satisfying \ref{itm:C2}, \ref{itm:C4}, and $w\in \mathrm{C}(\overline{\Omega})$ be a Lipschitz viscosity solution to $H(x,Dw(x),0) \leq c$ in $\Omega$. For $\delta>0$ small enough, we define the smaller domain $\Omega^\delta = \{x\in \Omega:\mathrm{dist}(x,\partial\Omega) > \delta\}$.
\begin{itemize}

    \item[$(\mathrm{i})$] If $\Omega$ satisfies \ref{itm:C0''} then $\left(1-\theta^{-1}\delta\right)\Omega \subset \Omega^\delta$.
    
    \item[$(\mathrm{ii})$] If $\Omega$ satisfies either \ref{itm:C0''}, or $\partial \Omega$ is of class $\mathrm{C}^1$ and \ref{itm:C5} holds, then for every $\delta>0$ there exists $w_\delta\in \mathrm{C}^1(\overline{\Omega})$ such that $w_\delta\to w$ a.e. as $\delta\to 0^+$ and $H\left(x,Dw_\delta(x),0\right) \leq c + \mathcal{O}(\delta)$ in $\Omega$.
   
\end{itemize}
\end{lem}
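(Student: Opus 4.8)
The plan is to prove the two assertions separately: (i) is an elementary scaling estimate read off directly from \ref{itm:C0''}, while (ii) is a \emph{rescale--then--mollify} construction in which the subsolution inequality is preserved up to a vanishing error by means of convexity \ref{itm:C2} together with the $x$--continuity \ref{itm:C4} (with \ref{itm:C5} replacing convexity in the $\mathrm{C}^1$--boundary variant).

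\emph{Part (i).} Fix $\delta>0$ with $\theta^{-1}\delta<1$, let $x\in(1-\theta^{-1}\delta)\Omega$, and set $\varepsilon:=\theta^{-1}\delta/(1-\theta^{-1}\delta)>0$, so that $(1+\varepsilon)x\in\Omega\subset\overline\Omega$ and $(1+\varepsilon)\delta=\delta/(1-\theta^{-1}\delta)=\theta\varepsilon$. If $\mathrm{dist}(x,\partial\Omega)\le\delta$, use compactness of $\partial\Omega$ to pick $y\in\partial\Omega$ with $|x-y|\le\delta$; then $(1+\varepsilon)y\in(1+\varepsilon)\partial\Omega$, while $|(1+\varepsilon)y-(1+\varepsilon)x|=(1+\varepsilon)|x-y|\le(1+\varepsilon)\delta=\theta\varepsilon$, and since $(1+\varepsilon)x\in\overline\Omega$ this forces $\mathrm{dist}((1+\varepsilon)y,\overline\Omega)\le\theta\varepsilon$, contradicting \ref{itm:C0''}. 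Hence $\mathrm{dist}(x,\partial\Omega)>\delta$, i.e. $x\in\Omega^\delta$.

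\emph{Part (ii), star-shaped case.} Put $s=s_\delta:=1-\theta^{-1}\delta$ and $\widehat w_\delta(x):=w(sx)$, a Lipschitz function on $\tfrac{1}{s}\overline\Omega\supset\overline\Omega$ with $\mathrm{Lip}(\widehat w_\delta)\le\mathrm{Lip}(w)$. Since $\tfrac{1}{s}\Omega$ is again star-shaped and satisfies \ref{itm:C0''}, part (i) applied to it gives $\Omega=s\cdot\tfrac{1}{s}\Omega\subset(\tfrac{1}{s}\Omega)^{\delta}$, hence $\mathrm{dist}(\overline\Omega,\partial(\tfrac{1}{s}\Omega))\ge\delta$; so, with $\rho_\sigma(z)=\sigma^{-n}\rho(z/\sigma)$ a standard mollifier and $\sigma=\delta/2$, the function $w_\delta:=\widehat w_\delta*\rho_\sigma$ is of class $\mathrm{C}^\infty$ on a neighbourhood of $\overline\Omega$. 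As $w$ is a Lipschitz (hence a.e.) subsolution, $H(sy,Dw(sy),0)\le c$ for a.e. $y$ with $sy\in\Omega$ and $D\widehat w_\delta(y)=s\,Dw(sy)$ a.e.; writing $s\,Dw(sy)=s\,Dw(sy)+(1-s)\cdot0$ and using \ref{itm:C2},
\[
H\big(y,D\widehat w_\delta(y),0\big)\le\big[H(y,s\,Dw(sy),0)-H(sy,s\,Dw(sy),0)\big]+s\,H(sy,Dw(sy),0)+(1-s)\,H(sy,0,0)
\]
for a.e. $y\in\Omega$. By \ref{itm:C4} the bracket is at most $\varpi_1\big(0,(1-s)|y|(s\,\mathrm{Lip}(w)+1)\big)$, the middle term is $\le sc$, and $(1-s)|H(sy,0,0)|\le(1-s)\max_{\overline{\mathcal U}}|H(\cdot,0,0)|$; all three contributions vanish as $\delta\to0^+$, so $H(y,D\widehat w_\delta(y),0)\le c+m_1(\delta)$ a.e. with $m_1(\delta)\to0$. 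Mollifying, convexity \ref{itm:C2} and Jensen's inequality give $H(x,Dw_\delta(x),0)\le\int\rho_\sigma(z)\,H(x,D\widehat w_\delta(x-z),0)\,dz$, and \ref{itm:C4} shifts the first argument from $x-z$ to $x$ at the cost of $\varpi_1(0,\sigma(\mathrm{Lip}(w)+1))$; altogether $H(x,Dw_\delta(x),0)\le c+m(\delta)$ in $\Omega$ with $m(\delta)\to0$. Finally $|w_\delta-w|\le|w_\delta-\widehat w_\delta|+|\widehat w_\delta-w|\le\mathrm{Lip}(w)\big(\sigma+(1-s)\max_{x\in\overline\Omega}|x|\big)\to0$ uniformly on $\overline\Omega$, hence a.e.

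\emph{Part (ii), $\mathrm{C}^1$-boundary case, and the $\mathcal{O}(\delta)$ normalization.} When $\partial\Omega$ is $\mathrm{C}^1$ one replaces $x\mapsto sx$ by a global $\mathrm{C}^1$ inward diffeomorphism $\Psi_\delta$ of a neighbourhood of $\overline\Omega$ (built from the inward unit normal and a partition of unity over the compact set $\partial\Omega$) with $\Psi_\delta\to\mathrm{id}$ in $\mathrm{C}^1$ and $\mathrm{dist}(\Psi_\delta(\overline\Omega),\partial\Omega)\ge c_0\delta$ for some $c_0>0$; then $\widehat w_\delta:=w\circ\Psi_\delta^{-1}$ has $D\widehat w_\delta=(D\Psi_\delta^{-1})^{*}Dw(\Psi_\delta^{-1}(\cdot))$ with $\|D\Psi_\delta^{-1}-I\|=\mathcal{O}(\delta)$, so $|D\widehat w_\delta-Dw(\Psi_\delta^{-1}(\cdot))|=\mathcal{O}(\delta)$ and \ref{itm:C5} (instead of convexity) absorbs this change of the gradient; the mollification step is then verbatim. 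To upgrade the vanishing error $m(\delta)$ to the stated $\mathcal{O}(\delta)$ bound, reparametrize by choosing $\widetilde\delta(\delta)\to0^+$ with $m(\widetilde\delta(\delta))\le\delta$ and replacing $w_\delta$ by $w_{\widetilde\delta(\delta)}$, which still converges a.e. to $w$. The crux—and the only real difficulty—is keeping the subsolution inequality while \emph{enlarging} the domain on which $w$ is defined; in the star-shaped case this is precisely where the convexity identity $H(x,sp,u)\le s\,H(x,p,u)+(1-s)H(x,0,u)$ lets us avoid \ref{itm:C5}.
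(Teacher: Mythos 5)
Your proof is correct and follows essentially the same route as the paper: an inward dilation of the domain combined with mollification, with convexity \ref{itm:C2} (Jensen) and \ref{itm:C4} absorbing the errors, part (i) being the same scaling computation with \ref{itm:C0''}, and the $\mathrm{C}^1$-boundary case handled by an inward diffeomorphism as in the cited construction. The only differences are cosmetic: you rescale first via $w(sx)$ and compensate the factor $s$ on the gradient by convexity at $p=0$ (using boundedness of $H(\cdot,0,0)$ on $\overline{\mathcal{U}}$), whereas the paper mollifies on $\Omega^\delta$ and then rescales with the prefactor $(1-2\theta^{-1}\delta)^{-1}$ so the gradient is unchanged; your reparametrization turning the modulus-type error into a genuine $\mathcal{O}(\delta)$ is a careful touch that the paper glosses over.
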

The proof of this Lemma is provided in Appendix. 

\section{Convergence of viscosity solutions in variable domains}\label{sec:3-conv-variable-domains-general} 
We always assume \ref{itm:L} in this section. Let $u_\lambda\in \mathrm{C}(\overline{\Omega}_\lambda)$ be a solution to \eqref{eq:hj-lb}, i.e.,
\begin{equation}\label{eq:C-lambda}
    \begin{cases}
        H(x,Du_\lambda(x),\lambda u_\lambda(x)) \leq C_\lambda, \qquad x\in \Omega_\lambda,\\
        H(x,Du_\lambda(x),\lambda u_\lambda(x)) \geq C_\lambda, \qquad x\in \overline{\Omega}_\lambda,
    \end{cases}
\end{equation}
where $C_\lambda\to c(H)$ as $\lambda \to 0^+$. To get estimate for $u_\lambda$ (by Theorem \ref{thm:perron}), it is necessary to consider the the ergodic problem on $\Omega_\lambda$, that is 
\begin{equation}\label{eq:ergodic}
    \begin{cases}
        \begin{aligned}
            H(x,D  u(x),0)\leq c(\lb),\quad x\in \Om_\lb,\\
            H(x,D  u(x),0)\geq c(\lb),\quad x\in \ol\Om_\lb.\\
        \end{aligned}         
    \end{cases}
\end{equation}
Here $c(\lambda)$ is the ergodic constant, defined as
\begin{equation*}
    c(\lambda) = \inf \left\lbrace c\in \R: H(x,Du(x),0) \leq c\;\text{has a solution in}\;\Omega_\lambda \right\rbrace .
\end{equation*}
We have $c(H) = \lim_{\lambda\rightarrow 0^+}c(\lambda) = \lim_{\lambda\rightarrow 0^+} C_\lambda$. We state the following properties on $c(\lambda)$. 

\begin{prop}\label{prop:aux-conv} Assume \ref{itm:C1'}--\ref{itm:C5}.
\begin{itemize}
\item[$\mathrm{(i)}$] The only value of $e\in\R$ such that 
\beq\label{eq:hj-aux-2}
\left\{
\begin{aligned}
    H(x,D  u(x),e)\leq c(\lb),\quad x\in \Om_\lb,\\
    H(x,D  u(x),e)\geq c(\lb),\quad x\in \ol\Om_\lb.\\
\end{aligned}
\right.
\eeq
possesses viscosity solutions is zero;
\item[$\mathrm{(ii)}$] There exists a uniform constant $K$ depends on $\mathcal{U}$ such that for all $\lambda>0$ then
\begin{equation*}
    \left|\frac{c(\lb)-c(H)}{r(\lb)}\right|\leq K.
\end{equation*}
\end{itemize}
\end{prop}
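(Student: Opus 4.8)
The plan is to handle the two items by rather different routes: item (i) follows softly from strict monotonicity of the ergodic constant in the $u$-slot, while item (ii) is a quantitative estimate obtained by ``undoing'' the dilation $\Omega_\lambda=(1+r(\lambda))\Omega$ through a change of variables.

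For (i) I would introduce, for each $e\in\mathbb{R}$, the ergodic constant $c_e(\lambda):=\inf\{c\in\mathbb{R}:H(x,Du(x),e)\leq c\ \text{admits a subsolution in}\ \Omega_\lambda\}$ of the Hamiltonian $H(\cdot,\cdot,e)$ on $\Omega_\lambda$, so that $c_0(\lambda)=c(\lambda)$ and, by the standard theory of state-constraint ergodic problems for convex coercive Hamiltonians (see \cite{C-DL,Mitake2008,S1,S2}), the problem \eqref{eq:hj-aux-2} is solvable exactly when its constant equals $c_e(\lambda)$ --- in particular \eqref{eq:ergodic} is solvable. It then suffices to show that $e\mapsto c_e(\lambda)$ is strictly increasing, for then $c_e(\lambda)=c(\lambda)=c_0(\lambda)$ forces $e=0$. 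This monotonicity is exactly what \ref{itm:C1'} delivers: if $e_1<e_2$ and $v$ is a subsolution of $H(x,Dv(x),e_2)\leq c_{e_2}(\lambda)$ in $\Omega_\lambda$, then at any $x_0$ where a smooth $\phi$ touches $v$ from above one has $H(x_0,D\phi(x_0),e_1)\leq H(x_0,D\phi(x_0),e_2)-\kappa(e_2-e_1)\leq c_{e_2}(\lambda)-\kappa(e_2-e_1)$, so $v$ is a subsolution of $H(x,Du(x),e_1)\leq c_{e_2}(\lambda)-\kappa(e_2-e_1)$ in $\Omega_\lambda$, whence $c_{e_1}(\lambda)\leq c_{e_2}(\lambda)-\kappa(e_2-e_1)<c_{e_2}(\lambda)$.

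For (ii) I would first collect the uniform bounds the proof needs. Since $0\in\Omega_\lambda$ and $\overline{\Omega}_\lambda\subset\mathcal{U}$ for all $\lambda$, one has $1+r(\lambda)\in(0,R_{\mathcal{U}}]$ for a constant $R_{\mathcal{U}}$ depending only on $\mathcal{U}$; comparing with constant sub/supersolutions (equivalently, using a holonomic Dirac mass at the fixed point $x=0$) gives $\sup_\lambda|c(\lambda)|<\infty$; hence, by coercivity \ref{itm:C3} together with \ref{itm:C5} as in the gradient estimate of Theorem \ref{thm:perron}, every solution of \eqref{eq:ergodic} is $L_0$-Lipschitz with $L_0$ independent of $\lambda$, and by convexity \ref{itm:C2} and continuity of $H$ there is $C_0=C_0(\mathcal{U},L_0)$ with $|H(x,p,0)-H(x,q,0)|\leq C_0|p-q|$ for $x\in\overline{\mathcal{U}}$, $|p|,|q|\leq L_0$. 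Writing $r=r(\lambda)$: by \ref{itm:C0''} and star-shapedness, $\Omega\subset\Omega_\lambda$ when $r>0$ and $\Omega_\lambda\subset\Omega$ when $r<0$, and restricting an ergodic subsolution to the smaller domain already shows that $c(\lambda)-c(H)$ has the sign of $r$; so it remains to prove $|c(\lambda)-c(H)|\leq K|r|$. For that I would take an $L_0$-Lipschitz ergodic subsolution $w$ of whichever of $\Omega,\Omega_\lambda$ carries the smaller constant and transplant it to the other domain by $\widetilde w(x):=w(x/(1+r))$; with $y=x/(1+r)$ this is again $L_0$-Lipschitz, and a.e.
\begin{equation*}
H\big(x,D\widetilde w(x),0\big)=H\Big((1+r)y,\tfrac{1}{1+r}Dw(y),0\Big)=A(y)+B(y)+H\big(y,Dw(y),0\big),
\end{equation*}
with $A(y)=H((1+r)y,\tfrac{1}{1+r}Dw(y),0)-H((1+r)y,Dw(y),0)$ and $B(y)=H((1+r)y,Dw(y),0)-H(y,Dw(y),0)$. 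The momentum correction satisfies $|A(y)|\leq C_0L_0\,|r|/(1+r)=\mathcal{O}(|r|)$, and the position correction is controlled by the radial first-order expansion \ref{itm:C7}: applying its defining inequality with $\delta=r$, $p=Dw(y)$, together with a uniform bound $M_1$ for $|\langle\partial_xH(y,p,0),y\rangle|$ over $\overline{\mathcal{U}}\times\{|p|\leq L_0\}$, gives $|B(y)|\leq|r|\big(M_1+\varpi_4(L_0,R_{\mathcal{U}})\big)=\mathcal{O}(|r|)$. Hence $\widetilde w$ is an a.e. --- so, by \ref{itm:C2}--\ref{itm:C3}, viscosity --- subsolution of $H(x,Du,0)\leq\mathrm{ess\,sup}_{y}H(y,Dw(y),0)+K|r|$ on the larger domain; reading off the ergodic constant there, and then exchanging the roles of $\Omega$ and $\Omega_\lambda$ for the reverse inequality, yields $|c(\lambda)-c(H)|\leq K|r(\lambda)|$ with $K=K(\mathcal{U})$.

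The step I expect to be the obstacle is the position correction $B(y)$: convexity gives nothing there, and \ref{itm:C4} alone would only yield $|B(y)|\leq\varpi_1(0,\mathrm{const}\cdot|r|)$, a modulus rather than the linear bound (ii) demands --- in fact one checks that there are Hamiltonians satisfying \ref{itm:C1'}--\ref{itm:C5} for which $c(\lambda)-c(H)\sim\sqrt{|r(\lambda)|}$ --- so the first-order radial expansion \ref{itm:C7} is the one genuinely non-trivial input. Everything else (the sign, the uniform Lipschitz bound, and the momentum term) is routine.
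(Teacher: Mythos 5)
Your proof of (i) is the paper's proof: for each $e$ the state-constraint problem \eqref{eq:hj-aux-2} is solvable for exactly one constant, namely the critical value $c_\lambda(e)$ of $H(\cdot,\cdot,e)$ on $\Omega_\lambda$, and \ref{itm:C1'} gives the strict monotonicity $c_\lambda(e_2)-c_\lambda(e_1)\geq\kappa(e_2-e_1)$ (your touching-function computation), so $c_\lambda(e)=c(\lambda)=c_\lambda(0)$ forces $e=0$. For (ii) the paper offers no argument at all: it is a bare citation of \cite[Theorem 2.22 (ii)]{Tu}. Your dilation argument --- transplant a uniformly Lipschitz ergodic subsolution through $x\mapsto x/(1+r(\lambda))$, bound the momentum correction by local Lipschitzness of $H$ in $p$ (convexity plus compactness) and the position correction by a first-order radial bound --- is in substance the proof behind that citation, so the two routes agree up to yours being self-contained. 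Two small repairs: for $r(\lambda)<0$ the transplanted gradient has size up to $(1+r(\lambda))^{-1}L_0>L_0$, so take the $p$-Lipschitz constant on a slightly larger ball; and the bound $M_1$ on $\langle\partial_xH(x,p,0),x\rangle$ should be extracted from \ref{itm:C7} applied with a fixed small $\delta_0$ together with continuity of $H$ on compacta, since \ref{itm:C7} does not directly assert local boundedness of $\partial_x H$.

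Your diagnosis of the hypothesis issue is correct and is the substantive point. Under the literal hypotheses \ref{itm:C1'}--\ref{itm:C5} the position correction is only $\varpi_1\bigl(0,\mathcal{O}(|r|)\bigr)$, and the linear rate in (ii) can genuinely fail: take $\Omega=B(0,1)$, $x_1\in\partial\Omega$, $H(x,p,u)=u+|p|-\sqrt{|x-x_1|}$ and $r(\lambda)=-\lambda$. This satisfies \ref{itm:C0''}, \ref{itm:L} and \ref{itm:C1'}--\ref{itm:C6} (with $\varpi_1(s,t)=\sqrt{t}$), yet $c(H)=0$ while $c(\lambda)=-\sqrt{\lambda}$, so $(c(\lambda)-c(H))/r(\lambda)=\lambda^{-1/2}$ is unbounded. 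Hence some Lipschitz-type control of $H$ along the dilation is indispensable: exactly \ref{itm:C7}, or \ref{itm:C45'}, or the Lipschitz-in-$x$ standing assumptions under which \cite{Tu} proves the cited theorem. This is a defect of the proposition's stated hypothesis list (inherited from the citation), not of your argument; in every place the paper actually invokes (ii) --- Theorem \ref{thm:ge}, Proposition \ref{prop:bounded}, Corollary \ref{cor:eta=0} --- such an assumption is available, so your proof covers those applications.
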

\begin{proof} For the first part, due to the definition of     Ma\~n\'e's critical value $c(H)$ in \eqref{eq:mane-cri} and \ref{itm:C1'}, there should be a unique $c_\lb(e)$ associated with \eqref{eq:hj-aux-2}. Moreover, $c_\lb(e)$ is continuous and strictly increasing in $e$, which urges that $c_\lb(0)=c(\lb)$. The second part is a direct citation of item (ii) of \cite[Theorem 2.22]{Tu}. 
\end{proof}

If \eqref{eq:assump_ratio} holds then from (ii) of Proposition \ref{prop:aux-conv} we have $|C_\lambda-c(\lambda)| \leq C\lambda$ for $\lambda>0$. Together with Perron's method we can deduce that the family of solutions $\{u_\lambda\}_{\lambda > 0}$ is uniformly bounded and equi-Lipschitz. 

\begin{prop}[Boundedness]\label{prop:bounded} Assume \ref{itm:C1'}--\ref{itm:C5} so that the comparison principle holds. If $|C_\lambda - c(H)|\leq C\lambda$ as $\lambda\to 0^+$ then the solution $u_\lambda$ of \eqref{eq:C-lambda} satisfies that $\Vert u_\lambda \Vert_{L^\infty(\Omega_\lambda)}$ and $\Vert Du_\lambda \Vert_{L^\infty(\Omega_\lambda)}$ are uniformly bounded independent of $\lambda\to 0^+$. 
\end{prop}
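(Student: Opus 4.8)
The plan is to trap $u_\lb$ between two $\lb$--independent vertical translates of the ergodic solution on $\Om_\lb$, obtain the $L^\infty$ bound from the comparison principle, and then read off the gradient bound from coercivity. First I would record the auxiliary data on the moving domain $\Om_\lb$: for each $\lb>0$ take a viscosity solution $v_\lb\in\mathrm{C}(\ol\Om_\lb)$ of the ergodic problem \eqref{eq:ergodic} on $\Om_\lb$, normalized so that $\min_{\ol\Om_\lb}v_\lb=0$. Since $c(\lb)\to c(H)$, the constants $c(\lb)$ are uniformly bounded, and since $\ol\Om_\lb\subset\ol\cU$ for every $\lb$, the coercivity \ref{itm:C3} (strengthened to \ref{itm:C3'}, see Remark \ref{rmk:superlinear}) together with the continuity of $H$ on $T^*\ol\cU$ yields a uniform Lipschitz bound $\Vert Dv_\lb\Vert_{L^\infty(\Om_\lb)}\le C_e$ --- the same mechanism producing the bound $C_l$ in Theorem \ref{thm:perron} --- so $0\le v_\lb\le V:=C_e\,\mathrm{diam}(\cU)$ on $\ol\Om_\lb$ with $V$ independent of $\lb$. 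Next, \ref{itm:L} gives $|r(\lb)|\le(|\eta|+1)\lb$ for small $\lb$, so Proposition \ref{prop:aux-conv}(ii) gives $|c(\lb)-c(H)|\le K(|\eta|+1)\lb$, and combining with the hypothesis $|C_\lb-c(H)|\le C\lb$ I would conclude $|c(\lb)-C_\lb|\le C_2\lb$ for all small $\lb$, with $C_2:=C+K(|\eta|+1)$.

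With $\kappa>0$ from \ref{itm:C1'}, I would then set $k:=V+C_2/\kappa$ and check that $v_\lb-k$ is a subsolution and $v_\lb+k$ a supersolution of $H(x,Du,\lb u)=C_\lb$ with the appropriate state-constraint. Indeed, using $H(x,p,u)\le H(x,p,0)+\kappa u$ for $u\le 0$ together with $v_\lb\le V$ gives, at any test point in $\Om_\lb$,
\[
    H\bigl(x,Dv_\lb,\lb(v_\lb-k)\bigr)\le H(x,Dv_\lb,0)+\kappa\lb(v_\lb-k)\le c(\lb)+\kappa\lb(V-k)=c(\lb)-C_2\lb\le C_\lb,
\]
and symmetrically, using $H(x,p,u)\ge H(x,p,0)+\kappa u$ for $u\ge 0$ together with $v_\lb\ge 0$,
\[
    H\bigl(x,Dv_\lb,\lb(v_\lb+k)\bigr)\ge H(x,Dv_\lb,0)+\kappa\lb(v_\lb+k)\ge c(\lb)+\kappa\lb k\ge c(\lb)+C_2\lb\ge C_\lb
\]
on $\ol\Om_\lb$. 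Applying the comparison principle (Theorem \ref{thm:comp-p}) to the Hamiltonian $\widetilde H(x,p,u):=H(x,p,\lb u)$ --- which satisfies \ref{itm:C1'} with the positive constant $\kappa\lb$ and inherits \ref{itm:C3}, \ref{itm:C4} from $H$ whenever $0<\lb\le 1$ --- I would get $v_\lb-k\le u_\lb\le v_\lb+k$ on $\ol\Om_\lb$, hence $\Vert u_\lb\Vert_{L^\infty(\Om_\lb)}\le V+k=:M$, uniformly in small $\lb>0$.

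Finally, for the gradient bound I would use that $\Vert\lb u_\lb\Vert_{L^\infty(\Om_\lb)}\le\lb M\le M$ is bounded and $C_\lb$ is bounded: since $u_\lb$ is a viscosity subsolution of $H(x,Du_\lb,\lb u_\lb)\le C_\lb$ in $\Om_\lb$, testing with smooth functions from above and invoking coercivity \ref{itm:C3'} with the uniform bound $|\lb u_\lb|\le M$ forces $u_\lb$ to be a viscosity subsolution of $|Du|\le C_l'$ in $\Om_\lb$, where $C_l':=\max\{|p|:H(x,p,s)\le\sup_\lb C_\lb,\ x\in\ol\cU,\ |s|\le M\}<\infty$; thus $\Vert Du_\lb\Vert_{L^\infty(\Om_\lb)}\le C_l'$, again independent of $\lb$. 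The step I expect to require the most care is keeping \emph{all} these constants ($C_e$, $C_2$, $M$, $C_l'$) uniform while the domain $\Om_\lb$ itself varies with $\lb$: this is precisely what the inclusion $\ol\Om_\lb\subset\ol\cU$, the uniform control of $c(\lb)$ and $C_\lb$, and the strengthened coercivity \ref{itm:C3'} are for. Transferring Theorem \ref{thm:comp-p} to $\widetilde H(\cdot,\cdot,\lb\cdot)$ is a triviality, since $\lb>0$ is fixed in each invocation.
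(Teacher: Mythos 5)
Your proposal is correct and follows essentially the same route as the paper: sandwich $u_\lambda$ between constant vertical shifts of a normalized ergodic solution on $\Omega_\lambda$ (using \ref{itm:C1'} to absorb the discrepancy $|C_\lambda-c(\lambda)|=\mathcal{O}(\lambda)$ coming from Proposition \ref{prop:aux-conv}(ii)), apply the comparison principle, and then extract the gradient bound from coercivity. The only cosmetic difference is that you fold the two shifts into a single $\lambda$-independent constant $k=V+C_2/\kappa$ and spell out the gradient bound for $u_\lambda$ itself, whereas the paper writes the shift as $\Vert u\Vert_{L^\infty}+|C_\lambda-c(\lambda)|/(\lambda\kappa)$ and emphasizes the gradient bound for the ergodic solution.
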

\begin{proof} From (ii) of Proposition \ref{prop:aux-conv} we have $|C_\lambda - c(\lambda)| = \mathcal{O}(\lambda)$ as $\lambda \to 0^+$. Let $u\in \mathrm{C}(\overline{\Omega}_\lambda)$ be a solution to \eqref{eq:ergodic} and we define 
\begin{equation*}
    w^\pm(x) = u(x) \pm \Vert u\Vert_{L^\infty(\Omega)} \pm \frac{|C_\lambda - c(\lambda)|}{\lambda\kappa}, \qquad x\in \overline{\Omega}_\lambda.
\end{equation*}
Then $Dw^\pm = Du$ and $\lambda w^- \leq 0 \leq \lambda w^+$, by \ref{itm:C1'} and \eqref{eq:ergodic} we have
\begin{equation*}
    \begin{aligned}
        H(x,Dw^-,\lambda w^-) &+ |C_\lambda - c(\lambda)|
        \leq H\left((x,Du,\lambda w^- + \frac{|C_\lambda - c(\lambda)|}{\kappa}\right) \\
        &= H\left(x,Du, \lambda u - \lambda\Vert u\Vert_{L^\infty(\Omega)} \right) \leq H(x, Du,0) \leq c(\lambda) \qquad\text{in}\;\Omega_\lambda,\\
        H(x,Dw^+,\lambda w^+) &- |C_\lambda - c(\lambda)|
        \geq H\left((x,Du,\lambda w^+ - \frac{|C_\lambda - c(\lambda)|}{\kappa}\right) \\
        &= H\left(x,Du, \lambda u + \lambda\Vert u\Vert_{L^\infty(\Omega)} \right) \geq H(x, Du,0) \geq c(\lambda)
        \qquad\text{on}\;\overline{\Omega}_\lambda.
    \end{aligned}
\end{equation*}
Therefore
\begin{equation*}
    \begin{cases}
        H(x,Dw^-,\lambda w^-)  \leq c(\lambda) - |C_\lambda - c(\lambda)| \leq C_\lambda\qquad \text{in}\;\Omega_\lambda\\
        H(x,Dw^+,\lambda w^+)  \geq c(\lambda) + |C_\lambda - c(\lambda)| \geq C_\lambda \qquad \text{on}\;\overline{\Omega}_\lambda.
    \end{cases}
\end{equation*}
By comparison principle, we have $w^-\leq u_\lambda\leq w^+$ on $\overline{\Omega}_\lambda$, i.e.,
\begin{equation*}
    \Vert u_\lambda\Vert_{L^\infty(\Omega_\lambda)} \leq 2\vert u\Vert_{L^\infty(\Omega_\lambda)} + C\kappa^{-1}
\end{equation*}
for all $\lambda$ small such that $|C_\lambda - c(\lambda)|\leq C\lambda$. Now we observe that from \ref{itm:C3}, any $u$ solve \eqref{eq:ergodic} satisfies the gradient bound $ \Vert Du\Vert_{L^\infty(\Omega_\lambda)} \leq C_\mathcal{U}$ for some constant $C_\mathcal{U}$ depends on $\mathcal{U}$ and $H$. Thus we can choose a solution $u$ of \eqref{eq:ergodic} such that $\min_{\Omega_\lambda} u = 0$ and consequently $\Vert u\Vert_{L^\infty(\Omega_\lambda)} \leq C_\mathcal{U}\mathrm{diam}(U)$, thus $\Vert u_\lambda\Vert_{L^\infty(\Omega_\lambda)}$ is uniformly bounded.
\end{proof}

\begin{proof}[Proof of Theorem \ref{thm:ge}] Assume \eqref{eq:assump_ratio} holds, suppose $u_\lambda$ is the viscosity solution of \eqref{eq:hj-lb} then
\begin{equation*}
    \wt u_\lambda(x):=\frac{u_\lambda((1+r(\lambda))x)}{1+r(\lambda)},\quad x\in\overline{\Omega}
\end{equation*}
should be a viscosity solution of the following
\begin{equation}\label{eq:hj-rect-new}
    \begin{cases}
        \begin{aligned}
        H((1+r(\lambda))x,D  u(x),\lambda(1+r(\lambda))u(x))\leq C_\lambda,\quad x\in \Omega,\\
        H((1+r(\lambda))x,D  u(x),\lambda (1+r(\lambda))u(x))\geq C_\lambda,\quad x\in \overline{\Omega}.\\
        \end{aligned}
    \end{cases}
\end{equation}
Due to Proposition \ref{prop:bounded} and \eqref{eq:assump_ratio}, both $\{\wt u_\lb\}_{\lb>0}$ and $\{u_\lb\}_{\lb>0}$ are uniformly Lipschitz and bounded, then by the {\it Arzel\`a-Ascoli Theorem} any convergent subsequence of $\{\wt u_\lb\}_{\lb>0}$  as $\lambda\rightarrow 0^+$ has to be a viscosity solution of \eqref{eq:hj-0}. Suppose $u_0$  is such a limit of a subsequence, i.e. $\lim_{i\rightarrow +\infty}\wt u_{\lambda_i}= u_0$ locally uniformly in $\mathrm{C}(\overline{\Omega})$, then for any $\mu\in\mathcal{M}$, we have 
\begin{equation*}
\begin{cases}
    \begin{aligned}
    &0= \int_{T\overline{\Omega}}\Big( L(x,v,0)+c(H) \Big)\;d\mu(x,v),\\
    &0\leq \int_{T\overline{\Omega}} \Big(L\left((1+r(\lambda))x,v,\lambda(1+r(\lambda))\wt u_{\lambda}(x)\right) +C_\lambda\Big) \;d\mu(x,v).
    \end{aligned}
\end{cases}
\end{equation*}
Therefore 
\begin{equation}\label{eq:crucial-1}
\begin{aligned}
    0 
    &\leq \int_{T\overline{\Omega}}    \Big(L\left((1+r(\lambda))x,v,\lambda(1+r(\lambda))\wt u_{\lambda}(x)\right) - L(x,v,0)\Big) \;d\mu(x,v)\\
    &=\int_{T\overline{\Omega}}    \Big(L\left((1+r(\lambda))x,v,\lambda(1+r(\lambda))\wt u_{\lambda}(x)\right) - L\left((1+r(\lambda))x,v,0\right)\Big) \;d\mu(x,v) \\
    & 
    +\int_{T\overline{\Omega}}    \Big(L\left((1+r(\lambda))x,v,0\right) - L\left(x,v,0\right)\Big) \;d\mu(x,v) + \Big(C_\lambda - c(H)\Big).
\end{aligned}
\end{equation}
Thanks to \ref{itm:C6} and \ref{itm:C7}, we can divide both side by $\lambda$ and let $\lambda\to 0^+$ along $\{\lambda_i\}$ to obtain
\begin{equation}\label{eq:property-forward}
\begin{aligned}
    \int_{T\ol\Om}u_0(x)\partial_u L(x,v,0)d\mu+\eta\int_{T\ol\Om}\langle\partial_xL(x,v,0),x\rangle  d\mu + \zeta\geq 0
\end{aligned}
\end{equation}
where $\mu\in\cM$ is arbitrarily chosen, thus $u_0 \in \mathcal{E}^{\eta,\zeta}$.\smallskip

To show that $u_0$ is the maximal element of $\mathcal{E}^{\eta,\zeta}$, take $\omega \in \cE^{\eta,\zeta}$ and $x\in \Omega$, we show that $u_0(x) \geq \omega(x)$. We consider $\lambda$ small so that $x\in \Omega_\lambda$. Due to Proposition \ref{prop:opt-formula}, for any $x\in \ol\Om_\lb$, there exists a Lipschitz curve $\gamma_x^\lb:(-\infty, 0]\rightarrow \ol\Om_\lb$ ending with it, such that 
\begin{align*}
    \frac{\mathrm{d}}{\mathrm{d}t} u_\lb(\gamma_x^\lb(t))
    &=L\left(\gamma_x^\lb(t),\dot\gamma_x^\lb(t),\lb  u_\lb(\gamma_x^\lb(t))\right)+C_\lambda\\
    &= L\left(\gamma_{x}^\lb(t),\dot\gamma_{x}^\lb(t),0\right)+ C_\lambda +\lb  u_\lb\left(\gamma_{x}^\lb(t)\right) \partial_u L\left(\gamma_{x}^\lb(t),\dot\gamma_{x}^\lb(t),0\right)+\Theta_x^\lb(t)
\end{align*}
for a.e. $t\in(-\infty,0]$ with
\begin{align*}
        \Theta_x^\lb(t):=L\left(\gamma_x^\lb(t),\dot\gamma_x^\lb(t),\lb  u_\lb(\gamma_x^\lb(t))\right)
        &-L\left(\gamma_{x}^\lb(t),\dot\gamma_{x}^\lb(t),0\right)\\
        &-\lb  u_\lb\left(\gamma_{x}^\lb(t)\right) \partial_u L\left(\gamma_{x}^\lb(t),\dot\gamma_{x}^\lb(t),0\right).
\end{align*}
Similar to the proof of Theorem \ref{thm:fix}, we define 
\begin{equation}\label{eq:alpha-2-new}
    \alpha_{x}^\lb(t):=\int_0^t\partial_u L\left(\gamma_{x}^\lb(s),\dot\gamma_{x}^\lb(s),0\right)ds\qquad\text{then}\qquad \int_{-\infty}^0 \Theta_x^\lb(t)e^{-\lb\alpha_x^\lb(t)} dt\rightarrow 0
\end{equation}
as $\lb\rightarrow 0^+$ due to \ref{itm:C6}. Let us define
\begin{equation*}
        \wt\om_\lb(y):=(1+r(\lb))\om\left(\frac{y}{1+r(\lambda)}\right), \qquad y\in \overline{\Omega}_\lambda
\end{equation*}
then it is a subsolution of 
\begin{equation*}
    H\left(\frac{y}{1+r(\lambda)}, D\wt \omega_\lambda(y),0\right) \leq c(H), \quad y\in \Omega_\lambda.
\end{equation*}
By Lemma \ref{lem:smooth_smaller_domain}, for each $\delta > 0$ there is $\wt \omega^\delta_\lambda \in \mathrm{C}^1(\overline{\Omega}_\lambda)$ such that $\Vert \wt \omega^\delta_\lambda - \wt \omega_\lambda\Vert_{L^\infty(\Omega_\lambda)} < \delta$ and 
\begin{equation*}
    H\left(\frac{x}{1+r(\lambda)}, D\wt \omega^\delta_\lambda(x),0\right) \leq c(H) + \mathcal{O}(\delta), \quad x\in \Omega_\lambda .
\end{equation*}
Therefore
\begin{equation*}
    \frac{\mathrm{d}}{\mathrm{d}t}  \left(\wt \omega^\delta_\lambda(\gamma^\lambda_x(t))\right) \leq L\left(\frac{\gamma^\lambda_x(t)}{1+r(\lambda)},\dot{\gamma}^\lambda_x(t),0\right) + c(H) + \mathcal{O}(\delta)
\end{equation*}
and thus due to the same procedure as in the proof of Theorem \ref{thm:fix}, we get
\begin{equation*}
    \begin{aligned}
        \frac{\mbox{d}}{\mbox{d}t}\left(\wt\omega^\delta_\lb(\gamma^\lb_{x}(t))\right)
        &\leq
        \frac{\mbox{d}}{\mbox{d}t}\left( u_{\lb}(\gamma^\lb_{x}(t))\right)-\lb  u_\lb(\gamma_{x}^\lb(t))\partial_u L(\gamma_{x}^\lb(t),\dot\gamma_{x}^\lb(t),0)-\Theta_x^\lb(t)\\
        & +L\left(\frac{\gamma_{x}^\lb(t)}{1+r(\lb)},\dot\gamma_{x}^\lb(t),0\right)-L\left(\gamma_{x}^\lb(t),\dot\gamma_{x}^\lb(t),0\right) - \mathcal{O}(\delta)\\
        & -\big(C_\lambda - c(H)\big).
    \end{aligned}
\end{equation*}
Therefore
\begin{equation*}
    \begin{aligned}
        \frac{\mbox{d}}{\mbox{d}t}\left(e^{-\lambda\alpha^\lambda_x(t)}\wt\omega^\delta_\lb(\gamma^\lb_{x}(t))\right) 
             &\leq \frac{\mbox{d}}{\mbox{d}t}\left(e^{-\lambda\alpha^\lambda_x(t)} u_\lb(\gamma^\lb_{x}(t))\right) - e^{-\lambda\alpha^\lambda_x(t)}\Theta^\lambda_x(\gamma^\lambda_x(t)) - e^{\lambda\alpha^\lambda_x(t)}\mathcal{O}(\delta)\\
             &\quad - e^{-\lambda\alpha^\lambda_x(t)}\lambda \wt\omega^\delta_\lambda(\gamma^\lambda_x(t))\partial_uL(\gamma^\lambda_x(t),\dot{\gamma}^\lambda_x(t),0)\\
             &\quad + e^{-\lambda\alpha^\lambda_x(t)}\left[L\left(\frac{\gamma_{x}^\lb(t)}{1+r(\lb)},\dot\gamma_{x}^\lb(t),0\right)-L\left(\gamma_{x}^\lb(t),\dot\gamma_{x}^\lb(t),0\right)\right]\\
             &\quad - e^{-\lambda \alpha^\lambda_x(t)}\big(C_\lambda - c(H)\big).
    \end{aligned}
\end{equation*}
Taking integration over $t\in (-\infty, 0]$, we deduce that 
\begin{equation*}
    \begin{aligned}
         u_\lb(x)\geq \wt\om^\delta_\lb(x) 
        &+\int_{-\infty}^0 \Theta_x^\lb(t)e^{-\lb\alpha_x^\lb(t)} dt + \Big(C_\lambda - c(H) + \mathcal{O}(\delta)\Big)\int_{-\infty}^0 e^{-\lb\alpha_x^\lb(t)} dt\\
                        & +\lb\int_{-\infty}^0 e^{-\lb\alpha_x^\lb(t)}dt\cdot \int_{T\ol\Om_\lb}\wt\om^\delta_\lb(y)\partial_uL(y,v,0) d\mu_x^\lb(y,v)\\
                        & +\int_{-\infty}^0 \left[L\left(\gamma_{x}^\lb(t),\dot\gamma_{x}^\lb(t),0\right)-L\left(\frac{\gamma_{x}^\lb(t)}{1+r(\lb)},\dot\gamma_{x}^\lb(t),0\right)\right]e^{-\lb\alpha_x^\lb(t)} dt
    \end{aligned}
\end{equation*}
where $\mu_x^\lb\in\cP(T\ol\Om_\lb,\R)$ is defined by 
\begin{equation*}
    \int_{T\ol\Om_\lb} f(y,v)d\mu_x^\lb(y,v):=
    \frac
    {\displaystyle
        \int_{-\infty}^0f(\gamma_x^\lb(t),\dot\gamma_x^\lb(t))e^{-\lb\alpha_x^\lb(t)}dt
    }
    {\displaystyle
        \int_{-\infty}^0e^{-\lb\alpha_x^\lb(t)}dt
    },
    \qquad \text{for}\; f\in \mathrm{C}_c(\R^{2n})
\end{equation*}
and $\alpha^\lambda_x(\cdot)$ is defined as in \eqref{eq:alpha-2-new}. We note that 
\begin{equation*}
    \int_{T\ol\Om_\lb}\partial_u L(y,v,0)d\mu_x^\lb(y,v)=
    \frac{\displaystyle
        1
    }
    {\displaystyle
        -\lb \int_{-\infty}^0e^{-\lb\alpha_x^\lb(t)}dt
    },
\end{equation*}
therefore
\begin{equation*}
    \lb\int_{-\infty}^0 e^{-\lb\alpha_x^\lb(t)}dt\cdot \int_{T\ol\Om_\lb}\wt\om^\delta_\lb(y)\partial_uL(y,v,0) d\mu_x^\lb(y,v) = 
    -\frac
    {\displaystyle
        \int_{T\overline{\Omega}_\lambda}\wt \omega^\delta_\lambda(y)\partial_u L(y,v,0)d\mu^\lambda_x(y,v)
    }
    {\displaystyle
        \int_{T\overline{\Omega}_\lambda}\partial_u L(y,v,0)d\mu^\lambda_x(y,v).
    }
\end{equation*}
Also, we have 
\begin{equation*}
    \begin{aligned}
        &\int_{-\infty}^0 \left[L\left(\gamma_{x}^\lb(t),\dot\gamma_{x}^\lb(t),0\right)-L\left(\frac{\gamma_{x}^\lb(t)}{1+r(\lb)},\dot\gamma_{x}^\lb(t),0\right)\right]e^{-\lb\alpha_x^\lb(t)} dt\\
        &\qquad\qquad\qquad = \int_{-\infty}^0 e^{-\lambda \alpha^\lambda_x(t)}dt\cdot\int_{T\overline{\Omega}_\lambda} \left(L(y,v,0) - L\left(\frac{y}{1+r(\lambda)},v,0\right)\right)d\mu^\lambda_x(y,v)\\
        &\qquad\qquad\qquad = 
        \frac
        {\displaystyle
            \int_{T\overline{\Omega}_\lambda} \left(L(y,v,0) - L\left(\frac{y}{1+r(\lambda)},v,0\right)\right)d\mu^\lambda_x(y,v)
        }
        {\displaystyle
            -\lambda \int_{T\overline{\Omega}_\lambda} \partial_uL(y,v,0)d\mu^\lambda_x(y,v)
        }
    \end{aligned}
\end{equation*}
and
\begin{equation*}
    \Big(C_\lambda - c(H) \Big)\int_{-\infty}^0 e^{-\lb\alpha_x^\lb(t)} dt = -\frac
    {\displaystyle
        C_\lambda - c(H)
    }
    {\displaystyle
        -\lambda \int_{T\overline{\Omega}_\lambda} \partial_uL (y,v,0)d\mu^\lambda_x(y,v)
    }.
\end{equation*}
Besides, we have $\wt\omega^\delta_\lambda \to \wt\omega_\lambda$ as $\delta \to 0^+$ uniformly, and 
\begin{equation*}
    \begin{aligned}
    \int_{T\ol\Om_\lb}\wt\om_\lb(y)\partial_uL(y,v,0) d\mu_x^\lb(y,v) &= (1+r(\lb))\int_{T\ol\Om}\om(y)\partial_uL((1+r(\lb))y,v,0) d\wt\mu_x^\lb(y,v),\\
    \int_{T\ol\Om_\lb}\partial_uL(y,v,0) d\mu_x^\lb(y,v) &= \int_{T\ol\Om}\partial_uL((1+r(\lb))y,v,0) d\wt\mu_x^\lb(y,v).
\end{aligned}
\end{equation*}
where we take the rectified measure $\wt\mu_x^\lb\in\cP(T\ol\Om,\R)$ by 
\begin{equation*}
    \int_{T\ol\Om} f(y,v)d\wt\mu_x^\lb(y,v):= \int_{T\ol\Om_\lb} f\Big(\frac y{1+r(\lb)},v\Big)d\mu_x^\lb(y,v), \qquad\text{for}\; f\in \mathrm{C}_c(\R^{2n}).     
\end{equation*}
Consequently, 
\begin{align*}
     u_\lambda(x) 
    \geq  \wt\omega^\delta_\lambda(x)
    &+ \int_{-\infty}^0 \Theta^\lambda_x(t) e^{\lambda \alpha^\lambda_x(t)}dt +\Big(C_\lambda - c(H) + \mathcal{O}(\delta)\Big)\int_{-\infty}^0 e^{-\lb\alpha_x^\lb(t)} dt\\
    &-\frac
    {\displaystyle
        \int_{T\overline{\Omega}_\lambda}\wt \omega^\delta_\lambda(y)\partial_u L\big(y,v,0\big)d\mu^\lambda_x(y,v)
    }
    {\displaystyle
        \int_{T\overline{\Omega}_\lambda}\partial_u L\big(y,v,0\big)d\mu^\lambda_x(y,v).
    } \\
    &
    \frac
        {\displaystyle
            \int_{T\overline{\Omega}_\lambda} \left(L(y,v,0) - L\left(\frac{y}{1+r(\lambda)},v,0\right)\right)d\mu^\lambda_x(y,v)
        }
        {\displaystyle
            -\lambda \int_{T\overline{\Omega}_\lambda} \partial_uL(y,v,0)d\mu^\lambda_x(y,v)
        }.
\end{align*}
Let $\delta\to 0^+$ and using the rectified measure $\wt \mu^\lambda_x$ instead of $\mu^\lambda_x$ we obtain
\begin{equation}\label{eq:crucial}
    \begin{aligned}
     u_\lambda(x) 
    \geq  \wt\omega_\lambda(x)
    &+ \int_{-\infty}^0 \Theta^\lambda_x(t) e^{\lambda \alpha^\lambda_x(t)}dt -\frac
    {\displaystyle
        \frac{C_\lambda - c(H)}{\lambda}
    }
    {\displaystyle
        \int_{T\overline{\Omega}_\lambda} \partial_uL ((1+r(\lambda))y,v,0)d\wt\mu^\lambda_x(y,v)
    }\\
    &-\frac
    {\displaystyle
        (1+r(\lambda))
        \int_{T\overline{\Omega}}\wt \omega_\lambda(y)\partial_u L\big((1+r(\lambda))y,v,0\big)d\wt\mu^\lambda_x(y,v)
    }
    {\displaystyle
        \int_{T\overline{\Omega}}\partial_u L\big((1+r(\lambda))y,v,0\big)d\wt\mu^\lambda_x(y,v).
    } \\
    &- \left(\frac{r(\lambda)}{\lambda}\right)
    \frac
    {\displaystyle
        \int_{T\overline{\Omega}} \left( \frac{L\big((1+r(\lambda)y,v,0)\big) - L(y,v,0))}{r(\lambda)}\right)d\wt\mu^\lambda_x(y,v)
    }
    {\displaystyle
        \int_{T\overline{\Omega}} \partial_uL\big((1+r(\lambda)y,v,0)\big)d\wt\mu^\lambda_x(y,v)
    }.
    \end{aligned}
\end{equation}
By the 
Lemma \ref{lem:wt_mu-lambda-x-new} below, any accumulating measure 
$\mu_x^0$ of $\{\wt\mu_x^{\lb}\}_{\lb\rightarrow 0^+}$ (in the sense of weak topology) has to be contained in $\cM$. Therefore, if $\om\in\cE^{\eta,\zeta}$,  taking $\lb\rightarrow 0^+$ in \eqref{eq:crucial} yields
\begin{equation}\label{eq:eta-new}
    \begin{aligned}
        u_0(x) 
        &\geq \omega(x) 
        - \frac
            {\displaystyle
                \int_{T\overline{\Omega}} \omega(y)\partial_u L(y,v,0)d \mu^0_x(y,v)
                + \eta \int_{T\overline{\Omega}} \left\langle\partial_x L(y,v,0),y\right\rangle d\mu^0_x(y,v) + \zeta
            }
            {\displaystyle
                \int_{T\overline{\Omega}} \partial_u L(y,v,0)d \mu^0_x(y,v)
            }.
    \end{aligned}
\end{equation}
From \ref{itm:C1'} we have $\partial_uL(y,v,0) < 0$ and
\begin{equation*}
    \int_{T\overline{\Omega}} \omega(y)\partial_u L(y,v,0)d\mu^0_x(y,v)
                + \eta \int_{T\overline{\Omega}} \left\langle\partial_x L(y,v,0),y\right\rangle d \mu^0_x(y,v) + \zeta \geq 0
\end{equation*}
due to $\omega \in \mathcal{E}^{\eta,\zeta}$, we deduce from \eqref{eq:eta-new} that $ u_0(x) \geq \omega(x)$. Since $x\in\overline{\Omega}$ is freely chosen, $u_0 = \max \mathcal{E}^{\eta,\zeta}$. \medskip

Conversely, assume that there exists $u_0\in \mathrm{C}(\overline{\Omega})$ such that $u_\lambda \to u_0$ then by stability $u_0$ is a solution to \eqref{eq:hj-0} and also $\wt u_\lambda \to u_0$ as well. We proceed as in the previous argument, but without assumption \eqref{eq:assump_ratio}. Firstly, in \eqref{eq:crucial-1} we have the following instead
\begin{equation}\label{eq:property-forward-liminf}
\begin{aligned}
    \int_{T\overline{\Omega}} u_0(y)\partial_u L(y,v,0)d\mu+\eta\int_{T\overline{\Omega}}\langle\partial_xL(y,v,0),y\rangle  d\mu + \liminf_{\lambda \to 0^+} \left(\frac{C_\lambda - c(H)}{\lambda}\right)\geq 0
\end{aligned}
\end{equation}
for all $\mu \in \mathcal{M}$. Secondly, proceed as in \eqref{eq:crucial} but we choose $\omega = u_0$ to start with, then
\begin{equation}\label{eq:property-forward-limsup}
    \int_{T\overline{\Omega}} u_0(y)\partial_u L(y,v,0)d\mu^0+\eta\int_{T\overline{\Omega}}\langle\partial_xL(y,v,0),y\rangle  d\mu^0 + \limsup_{\lambda \to 0^+} \left(\frac{C_\lambda - c(H)}{\lambda}\right)\leq 0
\end{equation}
for some $\mu^0 \in \mathcal{M}$, where we use the fact that $\partial_u L(y,v,0) < 0$ again here. From \eqref{eq:property-forward-liminf} and \eqref{eq:property-forward-limsup} we deduce that 
\begin{equation*}
    \zeta  =\lim_{\lambda \to 0^+} \left(\frac{C_\lambda - c(H)}{\lambda}\right) = - \int_{T\overline{\Omega}} \Big(u_0(y)\partial_uL(y,v,0) + \eta \big\langle \partial_x L(y,v,0), y\big\rangle\Big)d \mu^0(y,v).
\end{equation*}
The proof is therefore complete.
\end{proof}

\begin{lem}\label{lem:wt_mu-lambda-x-new} In the context of Theorem \ref{thm:ge}, if $x\in \Omega$ then any accumulating measure $\mu_x^0$ of $\{\wt\mu_x^{\lambda}\}_{\lambda\rightarrow 0^+}$ in the sense of weak* topology has to be contained in $\mathcal{M}$.
\end{lem}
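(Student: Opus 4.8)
The plan is to follow the blueprint of Lemma~\ref{lem:mat-mea}, the only new ingredient being the rectification that transports mass from $T\ol\Om_\lb$ back to $T\ol\Om$. First I would fix a subsequence $\lb_i\to0^+$ along which $\wt\mu_x^{\lb_i}\to\mu_x^0$ weakly$^*$; such subsequences exist because, by Proposition~\ref{prop:bounded}, the solutions $u_\lb$ are uniformly bounded and uniformly Lipschitz, so the curves $\gamma_x^\lb$ supplied by Proposition~\ref{prop:opt-formula} have a Lipschitz constant independent of $\lb$ and all the measures $\mu_x^\lb$, $\wt\mu_x^\lb$ are supported in one fixed compact subset of $T\cU$ (note also that $x\in\Om_\lb$ for $\lb$ small, since $\Om$ is open and $r(\lb)\to0$). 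It then suffices to show that $\mu_x^0$ is holonomic and that $\int_{T\ol\Om}\big(L(y,v,0)+c(H)\big)\,d\mu_x^0=0$; combined with the a priori bound $\int_{T\ol\Om} L(y,v,0)\,d\mu\ge-c(H)$ for holonomic $\mu$ (the lemma preceding Definition~\ref{defn:Mather-measures}), the equality forces $\mu_x^0$ to be a minimizing, hence a Mather, measure.

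For holonomy, given $\phi\in C^1(\ol\Om)$ I would set $\Psi_\lb(y):=\phi\big(y/(1+r(\lb))\big)$ on $\ol\Om_\lb$, so that $\langle D\phi(y/(1+r(\lb))),v\rangle=(1+r(\lb))\langle D\Psi_\lb(y),v\rangle$, and the change of variables defining $\wt\mu_x^\lb$ gives $\int_{T\ol\Om}\langle D\phi,v\rangle\,d\wt\mu_x^\lb=(1+r(\lb))\int_{T\ol\Om_\lb}\langle D\Psi_\lb,v\rangle\,d\mu_x^\lb$. Writing $\langle D\Psi_\lb(\gamma_x^\lb(t)),\dot\gamma_x^\lb(t)\rangle=\tfrac{\mathrm{d}}{\mathrm{d}t}\Psi_\lb(\gamma_x^\lb(t))$, integrating against $e^{-\lb\alpha_x^\lb(t)}$, and performing the same integration by parts as in Lemma~\ref{lem:mat-mea} --- using $\Vert\Psi_\lb\Vert_{L^\infty(\ol\Om_\lb)}=\Vert\phi\Vert_{L^\infty(\ol\Om)}$ and Lemma~\ref{lem:est_alpha(t)} --- bounds the right-hand side by $2\lb(1+r(\lb))\mathcal{K}\Vert\phi\Vert_{L^\infty(\ol\Om)}$, which vanishes along $\lb_i\to0^+$. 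Hence $\int_{T\ol\Om}\langle D\phi,v\rangle\,d\mu_x^0=0$ for all $\phi\in C^1(\ol\Om)$, i.e. $\mu_x^0$ is holonomic.

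For minimality I would apply the same integration-by-parts identity to $u_\lb$, whose derivative along $\gamma_x^\lb$ equals $L(\gamma_x^\lb,\dot\gamma_x^\lb,\lb u_\lb(\gamma_x^\lb))+C_\lb$, obtaining $\big|\int_{T\ol\Om_\lb}\big(L(y,v,\lb u_\lb(y))+C_\lb\big)\,d\mu_x^\lb\big|\le 2\lb\mathcal{K}\Vert u_\lb\Vert_{L^\infty(\Om_\lb)}\to0$ by Proposition~\ref{prop:bounded} and Lemma~\ref{lem:est_alpha(t)}. Rewriting the integral through the rectification, this reads
\[
\int_{T\ol\Om}\Big(L\big((1+r(\lb))y,v,\lb u_\lb((1+r(\lb))y)\big)+C_\lb\Big)\,d\wt\mu_x^\lb(y,v)\ \longrightarrow\ 0 .
\]
Since $r(\lb)\to0$, $C_\lb\to c(H)$ and $\Vert\lb u_\lb\Vert_{L^\infty}\to0$ (again Proposition~\ref{prop:bounded}), the integrand converges to $L(y,v,0)+c(H)$ uniformly on the fixed compact support; together with the weak$^*$ convergence $\wt\mu_x^{\lb_i}\to\mu_x^0$ this yields $\int_{T\ol\Om}\big(L(y,v,0)+c(H)\big)\,d\mu_x^0=0$, and the proof concludes as indicated in the first paragraph. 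I expect the only delicate point, beyond transcribing Lemma~\ref{lem:mat-mea}, to be this last uniform-convergence step: one must verify that the curves and measures genuinely live in a single $\lb$-independent compact set (which is precisely where Proposition~\ref{prop:bounded} is essential) and that the joint continuity of $L$, together with \ref{itm:C4} and the uniform bound on $u_\lb$, makes $L\big((1+r(\lb))y,v,\lb u_\lb((1+r(\lb))y)\big)\to L(y,v,0)$ uniformly in $(y,v)$ on that set.
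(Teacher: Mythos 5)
Your proposal is correct and follows essentially the same route as the paper's proof: rectify test functions back to $\overline{\Omega}_\lambda$, use the integration by parts against $e^{-\lambda\alpha_x^\lambda(t)}$ together with Lemma \ref{lem:est_alpha(t)} to get holonomy, and apply the same identity to $u_\lambda$ (whose derivative along $\gamma_x^\lambda$ is $L+C_\lambda$) plus the uniform bounds from Proposition \ref{prop:bounded} to obtain $\int_{T\overline{\Omega}}(L(y,v,0)+c(H))\,d\mu_x^0=0$. Your extra care about the uniform convergence of the integrand on a fixed compact set is a point the paper leaves implicit, but the argument is the same.
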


\begin{proof}[Proof of Lemma \ref{lem:wt_mu-lambda-x-new}] We show that $ \mu_x^0$ is holonomic and its action against $L(x,v,0)$ is $-c(H)$. Let $x\in \Omega$, we can assume $x\in \overline{\Omega}_\lambda$ for $\lambda>0$ small enough. The corresponding minimizing Lipschitz curve $\gamma^\lambda_x(\cdot)$ such that $|\dot{\gamma}^\lambda_x(t)|\leq C$ independent of $\lambda$ as $C$ only depends on $\mathcal{U}$ and universal constants. Denote $\alpha^\lambda_x(t) = \int_0^t \partial_uL(\gamma^\lambda_x(t), \dot{\gamma}^\lambda_x(t), 0)dt$ for $t\leq 0$ as previously, then
\begin{equation*}
\begin{aligned}
    (1+r(\lambda))\int_{T\overline{\Omega}} |v|d\wt\mu^\lambda_x(y,v) 
        &=\int_{T\overline{\Omega}_\lambda} |v|d\mu^\lambda_x(y,v) \\
        &= \left(\int_{-\infty}^0 e^{-\lambda \alpha_x^\lambda(t)}dt\right)^{-1}\left(\int_{-\infty}^0 |\dot{\gamma}^\lambda_x(t)|e^{-\lambda \alpha_x^\lambda(t)}dt\right) \leq C.
\end{aligned}
\end{equation*}
Letting $\lambda\to 0^+$ we deduce that $\int_{T\overline{\Omega}} |v|d\wt\mu^0_x(y,v) \leq C$. For $\phi\in \mathrm{C}^1(\overline{\Omega})$, we define $\phi^\lambda(y) = \phi\big((1+r(\lambda)^{-1}y) \big)$ which belongs to $\mathrm{C}^1(\overline{\Omega}_\lambda)$. By definition we have
\begin{equation*}
    \frac{1}{1+r(\lambda)}\int_{T\overline{\Omega}} \langle D\phi(y), v\rangle\;d\wt\mu^\lambda_x(y,v)  = \int_{T\overline{\Omega}_\lambda} \langle D\phi^\lambda(y), v\rangle\;d\mu^\lambda_x(y,v).
\end{equation*}
Similar to Lemma \ref{lem:mat-mea}, the right hand side vanishes as $\lambda\to 0^+$, therefore passing to the limit we obtain $\int_{T\overline{\Omega}} \langle D\phi(y), v\rangle d\mu^0_x(y,v) = 0$. Finally, we have
\begin{equation*}
    \begin{aligned}
    &\int_{T\overline{\Omega}}\left[L\big((1+r(\lambda))y,v,\lambda u_\lambda((1+r(\lambda))y)\big)+C_\lambda\right]d\wt\mu^\lambda_x = \int_{T\overline{\Omega}_\lambda} \left[L\big(y,v,\lambda u_\lambda(y)\big)+C_\lambda\right]d\mu^\lambda_x.
    \end{aligned}
\end{equation*}
The right hand side vanishes as $\lambda\to 0^+$ due to an argument similar to Lemma \ref{lem:mat-mea}, thus $\int_{T\overline{\Omega}} L(y,v,0)d\mu^0_x = -c(H)$ since $u_\lambda$ is uniformly bounded and $C_\lambda\to c(H)$ as $\lambda \to 0^+$.
\end{proof}

Now we provide a proof of the convergence of $u_\lambda$ using merely comparison principle for the special case where $\eta = 0$ in \ref{itm:L}. In this case $\zeta = 0$ in \eqref{eq:assump_ratio} due to (iv) of Remark \ref{rmk:case-c(lambda)}. 
\begin{proof}[Proof of Corollary \ref{cor:eta=0}] Recall that $\wt u_\lambda \in \mathrm{C}(\overline{\Omega})$ solves \eqref{eq:hj-rect-new}, and we have a priori estimate bound $\Vert \wt u_\lambda\Vert_{L^\infty(\Omega)} + \Vert D\wt u_\lambda\Vert_{L^\infty(\Omega)}\leq C_\mathcal{U}$, thus by \ref{itm:C45'} there exists a constant $C'_\mathcal{U}$ such that 
\begin{equation}\label{eq:mod}
    \begin{cases}
        H\big(x,D\wt u_\lambda, \lambda (1+r(\lambda))\wt u_\lambda\big) \leq C_\lambda + C'_\mathcal{U}|r(\lambda)|\qquad \text{in}\;\Omega,\\
        H\big(x,D\wt u_\lambda, \lambda (1+r(\lambda))\wt u_\lambda\big) \geq C_\lambda - C'_\mathcal{U}|r(\lambda)|\qquad \text{on}\;\overline{\Omega}.
    \end{cases}
\end{equation}
Let 
\begin{equation*}
    v^\pm_\lambda = \wt u_\lambda \pm |r(\lambda)|C_{\mathcal{U}} \pm  \left(\frac{C'_\mathcal{U}}{\kappa}\right)\frac{|r(\lambda)|}{\lambda} \pm \frac{|C_\lambda - c(H)|}{\lambda \kappa} \qquad\text{in}\;\mathrm{C}(\overline{\Omega}). 
\end{equation*}
Using \ref{itm:C1'} and \eqref{eq:mod} we have
\begin{equation*}
    \begin{aligned}
        H\left(x, Dv^-_\lambda, \lambda v^-_\lambda\right) &= H\left(x, D\wt u_\lambda, \lambda \wt u_\lambda - \lambda|r(\lambda)|C_{\mathcal{U}}
        - \frac{C'_{\mathcal{U}}}{\kappa}|r(\lambda)|-\frac{|C_\lambda - c(H)|}{\kappa}\right) \\
        & \leq 
        H\left(x, D\wt u_\lambda, \lambda (1+r(\lambda))\wt u_\lambda \right) - C'_\mathcal{U}|r(\lambda)|- |C_\lambda - c(H)| \leq c(H)\quad\text{in}\;\Omega,\\
        H\left(x, Dv^+_\lambda, \lambda v^+_\lambda\right) &= H\left(x, D\wt u_\lambda, \lambda \wt u_\lambda + \lambda|r(\lambda)|C_{\mathcal{U}}
        + \frac{C'_{\mathcal{U}}}{\kappa}|r(\lambda)|+\frac{|C_\lambda - c(H)|}{\kappa}\right) \\
        & \geq 
        H\left(x, D\wt u_\lambda, \lambda (1+r(\lambda))\wt u_\lambda \right) + C'_\mathcal{U}|r(\lambda)|+ |C_\lambda - c(H)| \geq c(H)\quad\text{on}\;\overline{\Omega}.
    \end{aligned}
\end{equation*}
By comparison principle with solution $\vartheta_\lambda$ 
of \eqref{eq:thm:fix-ge-equation} we obtain $v_\lambda^- \leq \vartheta_\lambda \leq v^+_\lambda$ on $\overline{\Omega}$, i.e.,
\begin{equation*}
    \left\Vert \wt u_\lambda - \vartheta_\lambda\right\Vert_{L^\infty(\Omega)} \leq |r(\lambda)|C_\mathcal{U} + \left(\frac{C'_\mathcal{U}}{\kappa}\right)\frac{|r(\lambda)|}{\lambda} +\frac{|C_\lambda - c(H)|}{\lambda \kappa} 
\end{equation*}
Using $\eta = 0$ in \ref{itm:L} and $\zeta = 0$ in \eqref{eq:assump_ratio} we obtain the conclusion.
\end{proof}

\section{Generalized convergence for arbitrary \texorpdfstring{$C_\lb$}\; value}\label{sec:4-generalized-conv-c}

In this section, we first show how to generalize the conclusions in Theorem \ref{thm:ge} to the setting with $C_\lambda \to c$ as $\lambda \to 0^+$ such that 
$c$ and $c(H)$ may not be the same. We then give a more essential criterion of the limit $\zeta = \lim_{\lambda\to 0^+}(C_\lambda - c)/\lambda$ in \eqref{eq:assump_ratio}.

\subsection{Convergence for generalized $C_\lambda$}
We present the following useful conclusion about the ergodic constant for a slightly different Hamiltonian.

\begin{lem}\label{lem:admis}
For $\Omega$ satisfying \ref{itm:C0''}, Hamiltonian $H$ satisfying \ref{itm:C1'}, \ref{itm:C2}, \ref{itm:C3} and any $c\in\R$, there exists a maximal $h_0(c)\in\R$ such that the equation 
\begin{equation*}
    H\left(x, D u(x), h_0(c)\right)=c,\quad x\in\Om
\end{equation*}
admits continuous subsolutions. Furthermore, $h_0(c(H)) = 0$, $c\mapsto h_0(c)$ is strictly increasing and
\begin{equation}\label{eq:Lipschitz-h0c}
    0 <  \frac{h_0(c_2) - h_0(c_1)}{c_2-c_1} \leq \frac{1}{\kappa}
\end{equation}
for $c_1\neq c_2$ in $\mathbb{R}$, where $\kappa$ is the constant from \ref{itm:C1'}.
\end{lem}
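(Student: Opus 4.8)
The plan is to reduce the whole statement to the study of the single-variable function $g\colon\mathbb{R}\to\mathbb{R}$ where $g(a)$ is the ergodic constant of the state-constraint problem for $H(\cdot,\cdot,a)$ on $\Omega$; equivalently $g(a)=c\big(H(\cdot,\cdot,a)\big)$ in the sense of \eqref{eq:mane-cri}. Since $H(\cdot,\cdot,a)$ inherits \ref{itm:C2} and \ref{itm:C3} and $\Omega$ satisfies \ref{itm:C0''}, this constant is finite and unique for every $a\in\mathbb{R}$, by the same reasoning that makes $c(H)$ finite and unique (the discussion opening Section~\ref{sec:2-preliminaries} and Theorem~\ref{thm:perron}, applied to $H(\cdot,\cdot,a)$; see also \cite{S1,Tu}). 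I claim it suffices to show that $g$ is a strictly increasing homeomorphism of $\mathbb{R}$ onto $\mathbb{R}$ satisfying $g(a_2)-g(a_1)\ge\kappa\,(a_2-a_1)$ whenever $a_1<a_2$: in that case $h_0:=g^{-1}$ is the desired function, \eqref{eq:Lipschitz-h0c} is just the inverted slope bound, and $h_0(c(H))=0$ because $g(0)=c(H)$ by \eqref{eq:mane-cri}.

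First I would prove the monotonicity and the slope bound, which is the only place \ref{itm:C1'} is needed. From \ref{itm:C1'}, for $a_1<a_2$ one has $H(x,p,a_2)\ge H(x,p,a_1)+\kappa\,(a_2-a_1)$ for every $(x,p)\in T^*\overline{\mathcal{U}}$. Hence any continuous viscosity subsolution of $H(x,Du,a_2)\le c$ in $\Omega$ is a continuous viscosity subsolution of $H(x,Du,a_1)\le c-\kappa\,(a_2-a_1)$ in $\Omega$; taking the infimum over admissible $c$ in \eqref{eq:mane-cri} yields $g(a_1)\le g(a_2)-\kappa\,(a_2-a_1)$. In particular $g$ is strictly increasing and $g(a)\to\pm\infty$ as $a\to\pm\infty$.

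Next I would show $g$ is continuous. Being monotone, $g$ can fail continuity only through jump discontinuities, which I would rule out by stability: given $a_n\to a$, choose solutions $u_n$ of $H(x,Du_n,a_n)=g(a_n)$ on $\overline{\Omega}$, normalized so that $\min_{\overline{\Omega}}u_n=0$. The sequence $\{g(a_n)\}$ is bounded (by monotonicity), and $H(\cdot,\cdot,a_n)$ is coercive in $p$ uniformly in $a_n$ on compact sets, so the $u_n$ are equi-Lipschitz, hence equibounded; by Arzel\`a--Ascoli together with the stability of state-constraint viscosity solutions under locally uniform convergence of Hamiltonians, a subsequential limit is a solution of the state-constraint problem for $H(\cdot,\cdot,a)$ with constant $\lim_n g(a_n)$, forcing $\lim_n g(a_n)=g(a)$ by uniqueness of the ergodic constant. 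Thus $g$ is an increasing homeomorphism of $\mathbb{R}$; putting $h_0:=g^{-1}$ gives a strictly increasing function with $0<\big(h_0(c_2)-h_0(c_1)\big)/(c_2-c_1)\le 1/\kappa$ for $c_1\neq c_2$.

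Finally I would identify $h_0(c)$ with the claimed maximum and record the main difficulty. For fixed $c$, the set of $a$ for which $H(x,Du,a)\le c$ admits a continuous subsolution in $\Omega$ equals $\{a:g(a)\le c\}$: the nontrivial inclusion uses that the ergodic equation $H(x,Du,a)=g(a)$ itself has a solution, which is then a subsolution of $H(x,Du,a)\le c$ once $g(a)\le c$. As $g$ is an increasing homeomorphism, this set is $(-\infty,h_0(c)]$, whose maximum is $h_0(c)$, and $a=h_0(c)$ belongs to it since $g(h_0(c))=c$. The only step that is not bookkeeping is the continuity of $g$ in the parameter $a$ (the stability/compactness argument in the third paragraph); everything else follows directly from \ref{itm:C1'}, the convexity and coercivity of $H(\cdot,\cdot,a)$, and the uniqueness of the state-constraint ergodic constant.
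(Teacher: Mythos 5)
Your proposal is correct, but it runs along a different axis than the paper's proof. The paper works directly with $h_0(c)$: it sandwiches $h_0(c)\in[h_-,h_+]$ using \ref{itm:C1'}--\ref{itm:C3} to get finiteness, identifies $c$ as the Ma\~n\'e critical value of $H(\cdot,\cdot,h_0(c))$ and invokes the uniqueness statement of Proposition \ref{prop:aux-conv}(i) to pin $h_0(c)$ down (this also gives $h_0(c(H))=0$), and then obtains \eqref{eq:Lipschitz-h0c} by applying the \ref{itm:C1'} inequality to a critical subsolution at the level $h_0(c_2)$ and using the definition \eqref{eq:mane-cri}. You instead build the map $a\mapsto g(a)$ (the ergodic constant of the frozen Hamiltonian $H(\cdot,\cdot,a)$), prove the quantitative monotonicity $g(a_2)-g(a_1)\ge\kappa(a_2-a_1)$ — which is the same \ref{itm:C1'} computation as the paper's, just transposed — and then invert after establishing that $g$ is a homeomorphism onto $\mathbb{R}$. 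The genuine difference is your third paragraph: the continuity/surjectivity of $a\mapsto g(a)$ is exactly the point the paper does not argue (it is asserted without proof inside the proof of Proposition \ref{prop:aux-conv}(i), and is implicitly needed for the paper's claim that $g(h_0(c))=c$), so your compactness-plus-stability argument makes explicit something the paper leaves tacit; the price is that you need extra ingredients (existence of state-constraint ergodic solutions for every frozen $a$, equi-Lipschitz bounds, stability, and the uniqueness of the constant for which the ergodic problem is solvable), all of which are standard and already used elsewhere in the paper, though the uniform-in-$x$ coercivity you invoke is the same implicit strengthening of \ref{itm:C3} the paper itself relies on (cf.\ Remark \ref{rmk:superlinear}). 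A lighter way to close that step, closer in spirit to the paper, is a one-sided perturbation: a critical subsolution at parameter $a$ is Lipschitz with a gradient bound independent of small changes in $a$, so uniform continuity of $H$ in $u$ on the relevant compact set shows that if $g(a)\le c$ strictly then $g(a+\varepsilon)\le c$ for small $\varepsilon>0$; this yields $g(h_0(c))=c$ and strict monotonicity of $h_0$ without the Arzel\`a--Ascoli machinery.
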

\begin{proof} Due to \ref{itm:C1'}, \ref{itm:C2}, \ref{itm:C3} and the compactness of $\ol\Om\subset\cU$, there always exists a couple $(h_-, h_+)\in \R^2$ such that $h_-\leq h_+$, and
\begin{equation*}
    H(x,0, h_-)\leq c,\quad\forall x\in\Om, \qquad\text{and}\qquad \min_{(x, p)\in T^*\Om}H(x,p,h_+)> c.
\end{equation*}
Consequently, $h_0(c)\in[h_-,h_+]$ is finite. On the other side, due to the definition in \eqref{eq:mane-cri}, $c$ 
is actually the Ma\~n\'e's critical value of 
the Hamiltonian $H(\cdot,\cdot, h_0(c)): T^*\Om\rightarrow \R$, 
which implies the uniqueness of $h_0(c)$ due to item (i) 
of our Proposition \ref{prop:aux-conv}. As $c(H)$ is the Ma\~n\'e's critical value of $H(\cdot, \cdot, 0)$ we obtain $h_0(c(H)) = 0$ by the uniqueness following from (i) 
of Proposition \ref{prop:aux-conv}.

Let $c_1 < c_2$ be any two real numbers, by the definition of Ma\~n\'e's critical value and \ref{itm:C1'}, it is clear that $h_0(c_1) < h_0(c_2)$. Let $u\in \mathrm{C}(\Omega)$ be a solution to $H(x,Du(x), h_0(c_2)) \leq c_2$ in $\Omega$.
From \ref{itm:C1'} we have
\begin{equation*}
    H(x,Du(x), h_0(c_1)) - \kappa h_0(c_1) \leq H(x,Du(x), h_0(c_2)) - \kappa h_0(c_2) \qquad
    \text{in}\;\Omega.
\end{equation*}
Therefore
\begin{equation*}
    H(x,Du(x),h_0(c_1)) \leq c_2 - \kappa \big(h_0(c_2)-h_0(c_1)\big) \qquad \text{in}\;\Omega.
\end{equation*}
By the definition of the Ma\~n\'e's critical value $c_1 \leq c_2 - \kappa \left(h_0(c_2)-h_0(c_1)\right)$ and thus \eqref{eq:Lipschitz-h0c} follows.
\end{proof}

As we can see, if $c(u)$ is the Ma\~n\'e's critical value of the Hamiltonian $H(x,p,u)$ (for any $u\in\R$), then $c\mapsto h_0(c)$ is actually the inverse function of $u\mapsto c(u)$. Consequently, we have the following asymptotic conclusion related with the Mather measures:

\begin{lem}\label{lem:Mc_conv} Let $\Omega$ satisfy \ref{itm:C0''}, Hamiltonian $H$ satisfy \ref{itm:C1'}, \ref{itm:C2}, \ref{itm:C3} and $c$ be in a neighborhood of $c(H)$. Let $\mathcal{M}, \mathcal{M}_c$ be the set of Mather measures associated with $H(\cdot, \cdot, 0)$ and $H(\cdot, \cdot, h_0(c))$ on $\Omega$, respectively. If $\nu_c\in \mathcal{M}_c$ be a sequence of measures then as $c\to c(H)$, there exists $\nu \in \mathcal{M}$ such that $\nu_c \rightharpoonup \nu$ (along some subsequences) weakly in the sense of measure.
\end{lem}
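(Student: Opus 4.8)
\emph{Proof proposal.} The plan is to run a stability/$\Gamma$-convergence argument for the variational problems that define Mather measures. By Lemma~\ref{lem:admis} the map $h_0$ is continuous with $h_0(c(H))=0$, so as $c\to c(H)$ we have $h_0(c)\to 0$, hence $H(\cdot,\cdot,h_0(c))\to H(\cdot,\cdot,0)$ and $L(\cdot,\cdot,h_0(c))\to L(\cdot,\cdot,0)$ locally uniformly. I want to show that every weak$^*$ cluster point $\nu$ of the family $\{\nu_c\}\subset\mathcal{M}_c$ (as $c\to c(H)$) is holonomic and minimizes $\mu\mapsto\int_{T\overline{\Omega}}L(x,v,0)\,d\mu$ with value $-c(H)$, which is exactly the statement $\nu\in\mathcal{M}$. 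I may assume \ref{itm:C3'} (so $L$ is finite and superlinear) by Remark~\ref{rmk:superlinear}, and I use that the set $\mathcal{C}(T\overline{\Omega},\mathbb{R})$ of holonomic measures depends only on $\Omega$, so both $\mathcal{M}$ and $\mathcal{M}_c$ sit inside it.

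First I would establish compactness of $\{\nu_c\}$. Since $\nu_c\in\mathcal{M}_c$ and the ergodic constant of $H(\cdot,\cdot,h_0(c))$ on $\Omega$ equals $c$ (this is how $h_0(c)$ is characterised in Lemma~\ref{lem:admis}), Definition~\ref{defn:Mather-measures} applied to $H(\cdot,\cdot,h_0(c))$ gives $\int_{T\overline{\Omega}}L(x,v,h_0(c))\,d\nu_c=-c$. For $c$ near $c(H)$ the numbers $h_0(c)$ stay in a fixed bounded interval, so by \ref{itm:C3'}, continuity of $H$ and compactness of $\overline{\Omega}$ there are, for every $\rho\ge 0$, finite constants $B_\rho$ independent of $c$ with $L(x,v,h_0(c))\ge\rho|v|-B_\rho$ on $T\overline{\Omega}$. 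Taking $\rho\in\{0,1\}$ and integrating against $\nu_c$ yields a uniform bound $\int_{T\overline{\Omega}}|v|\,d\nu_c\le C$, and then the elementary estimate
\begin{equation*}
    \rho\int_{\{|v|>R\}}|v|\,d\nu_c\le \int_{\{|v|>R\}}L(x,v,h_0(c))\,d\nu_c+B_\rho\,\nu_c(\{|v|>R\})\le (B_0-c)+\frac{B_\rho C}{R},
\end{equation*}
followed by letting $R\to\infty$ and then $\rho\to\infty$, shows the velocity marginals are uniformly integrable, $\lim_{R\to\infty}\sup_c\int_{\{|v|>R\}}|v|\,d\nu_c=0$; in particular $\{\nu_c\}$ is tight. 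By Prokhorov's theorem I extract $c_k\to c(H)$ with $\nu_{c_k}\rightharpoonup\nu$ weakly$^*$, $\nu$ a probability measure on $T\overline{\Omega}$, and $\int_{T\overline{\Omega}}|v|\,d\nu\le C<\infty$ by lower semicontinuity.

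Then come the limit passages. For holonomy: for each $\phi\in\mathrm{C}^1(\overline{\Omega})$ the function $(x,v)\mapsto\langle D\phi(x),v\rangle$ is continuous with at most linear growth, so weak$^*$ convergence together with uniform integrability of $|v|$ (truncate $v$ at level $R$, pass to the limit, then let $R\to\infty$) gives $\int_{T\overline{\Omega}}\langle D\phi(x),v\rangle\,d\nu=\lim_k\int_{T\overline{\Omega}}\langle D\phi(x),v\rangle\,d\nu_{c_k}=0$, so $\nu$ is holonomic. For the value: $L$ is jointly continuous, uniformly bounded below ($L(\cdot,\cdot,h_0(c_k))\ge -B_0$), and $L(x_k,v_k,h_0(c_k))\to L(x,v,0)$ whenever $(x_k,v_k)\to(x,v)$; hence the standard lower semicontinuity of integral functionals under weak$^*$ convergence (see e.g.\ \cite{Buttazzo1998OnedimensionalVP}) yields $\int_{T\overline{\Omega}}L(x,v,0)\,d\nu\le\liminf_k\int_{T\overline{\Omega}}L(x,v,h_0(c_k))\,d\nu_{c_k}=\lim_k(-c_k)=-c(H)$. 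Conversely, since $\nu$ is holonomic, the lemma preceding Definition~\ref{defn:Mather-measures} gives $\int_{T\overline{\Omega}}L(x,v,0)\,d\nu\ge -c(H)$. Therefore equality holds, $\nu$ realises the minimum in \eqref{eq:defn-Mather}, and $\nu\in\mathcal{M}$, which finishes the proof.

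The main obstacle is the uniform control of the velocity marginals of $\nu_c$ as $c\to c(H)$ — the tightness, and more essentially the uniform integrability of $|v|$ — because without it neither the holonomy constraint (tested against functions of linear growth) nor the value identity survives the weak$^*$ limit; the point that makes it work is that the superlinear lower bounds $L(x,v,h_0(c))\ge\rho|v|-B_\rho$ can be taken uniform over the relevant range of $h_0(c)$, which needs only \ref{itm:C3'}, continuity of $H$, compactness of $\overline{\Omega}$, and the continuity of $h_0$ at $c(H)$ from Lemma~\ref{lem:admis}. An alternative is to place all the $\nu_c$ on a common compact support using a Fenchel--Young equality against a smoothed critical subsolution (Lemma~\ref{lem:smooth_smaller_domain}) together with the coercivity \ref{itm:C3}, which would make the limit passages completely transparent, but since \ref{itm:C2} is not assumed to give strict convexity this route is somewhat more delicate.
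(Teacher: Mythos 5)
Your proposal is correct and follows essentially the same route as the paper's (three-line) proof: extract a weak subsequential limit of $\nu_c$, check it is holonomic, and pass to the limit in $\int_{T\overline{\Omega}}\big(L(x,v,h_0(c))+c\big)\,d\nu_c=0$ using $h_0(c)\to 0$ and the lemma preceding Definition~\ref{defn:Mather-measures} for the reverse inequality. The only difference is that you supply the compactness and uniform integrability of the velocity marginals (via the uniform superlinear bounds $L(x,v,h_0(c))\ge\rho|v|-B_\rho$), which the paper's proof leaves implicit but which are indeed needed to justify both the holonomy and the value passage against integrands of linear growth.
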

\begin{proof} Let $\nu$ be a weak limit of of $\nu_c$ along a subsequence, it is clear that $\nu$ is holonomic. By definition of $\nu_c\in \mathcal{M}_c$ we have ${\int_{\Omega}} \left( L\left(x,v,h_0(c)\right) + c\right)\;d\nu_c(x,v) = 0$. By the strong convergence of $L(x,v,h_0(c))\to L(x,v,0)$ as $c\to c(H)$ and $\nu_c\rightarrow \nu$ we obtain the conclusion.
\end{proof}

We present an analogue of Theorem \ref{thm:fix} where $c(H)$ is being replaced by $c$.

\begin{thm}\label{thm:c-ener-fix} Suppose $\Omega$ satisfy \ref{itm:C0''}, $r(\lambda)$ satisfy \ref{itm:L}, $H$ satisfying \ref{itm:C1'},\ref{itm:C2}--\ref{itm:C7} and $c\in\R$, the solution $u_\lambda$ of 
\begin{equation}\label{eq:hj-lb-c-fix} 
    \begin{cases}
        \begin{aligned}
            &H\left(x,Du, \lambda u\right) \leq c \qquad\text{in}\;\Omega,\\
            &H\left(x,Du, \lambda u\right) \geq c \qquad\text{on}\;\overline{\Omega}.
        \end{aligned}
    \end{cases}
\end{equation}
should satisfy $\lambda u_\lambda(\cdot)\rightarrow h_0(c)$ as $\lambda \to 0^+$ for certain constant $h_0(c)\in\R$. Furthermore
\begin{equation*}
    \lim_{\lb\rightarrow 0^+} \left(u_\lb(x)-\frac{h_0(c)}{\lb}\right)=u_c(x)
\end{equation*}
 where 
\begin{equation*}
   u_{c}(x)=\sup\cE_c
\end{equation*}
with
\begin{equation*}
\begin{aligned}
    \mathcal{E}_c &:= \Big\{w\in \mathrm{C}(\Omega,\R):H(x, Dw,h_0(c))\leq c\;\text{in}\;\Omega\;\text{in the viscosity sense}, \\
     &\qquad\qquad\qquad\qquad\;\int_{T\ol\Om}\partial_u L(x,v,h_0(c))w(x)d\mu(x,v)\geq 0,\;\forall \mu\in\cM_c\Big\}
\end{aligned}
\end{equation*}
and $\cM_c$ is the set of Mather measures associated with Hamiltonian $H(\cdot,\cdot, h_0(c)): T^*\Om\rightarrow\R$.
\end{thm}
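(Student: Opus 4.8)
The plan is to reduce the statement to Theorem \ref{thm:fix} by a translation in the $u$-variable, the only genuinely new input being a short comparison-principle computation that pins down $\lim_{\lb\to 0^+}\lb u_\lb$. Set
\[
\wt H(x,p,u):=H\big(x,p,u+h_0(c)\big),\qquad (x,p,u)\in T^*\ol\cU\times\R,
\]
where $h_0(c)$ is the constant furnished by Lemma \ref{lem:admis}. Translation in $u$ preserves monotonicity, convexity, coercivity, and the moduli in \ref{itm:C4}--\ref{itm:C5}; reading the differentiability conditions \ref{itm:C6}--\ref{itm:C7} at the base point $h_0(c)$ in place of $0$ (as is already implicit in the statement, which refers to $\partial_uL(\cdot,\cdot,h_0(c))$; for $c$ near $c(H)$ this is automatic since $h_0$ is continuous with $h_0(c(H))=0$), one checks that $\wt H$ satisfies \ref{itm:C1'}--\ref{itm:C7}. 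Its Lagrangian is $\wt L(x,v,u)=L(x,v,u+h_0(c))$, so $\partial_u\wt L(x,v,0)=\partial_uL(x,v,h_0(c))$. By Lemma \ref{lem:admis}, $c$ is exactly the ergodic constant (Ma\~n\'e critical value) of $\wt H(\cdot,\cdot,0)=H(\cdot,\cdot,h_0(c))$ on $\Om$; hence the set of Mather measures associated with $\wt H(\cdot,\cdot,0)$ coincides with $\cM_c$, and the admissible set produced by Theorem \ref{thm:fix} applied to $\wt H$ is precisely $\cE_c$.

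\textbf{Step 1: $\lb u_\lb\to h_0(c)$.} Let $w\in\mathrm C(\ol\Om)$ solve the ergodic problem $H(x,Dw,h_0(c))=c$ with state-constraint on $\ol\Om$ (existence from Lemma \ref{lem:admis} together with Perron's method as in Theorem \ref{thm:perron}), and put $w^\pm(x):=w(x)\pm\|w\|_{L^\infty(\Om)}+h_0(c)/\lb$. Then $Dw^\pm=Dw$, and $\lb w^-\le h_0(c)\le\lb w^+$, so by monotonicity \ref{itm:C1'},
\[
H(x,Dw^-,\lb w^-)\le H(x,Dw,h_0(c))\le c \text{ in }\Om,\qquad H(x,Dw^+,\lb w^+)\ge H(x,Dw,h_0(c))\ge c \text{ on }\ol\Om,
\]
i.e. $w^-$ is a subsolution of \eqref{eq:hj-lb-c-fix} in $\Om$ and $w^+$ a supersolution on $\ol\Om$. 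The comparison principle (Theorem \ref{thm:comp-p}, exactly as in the proof of Proposition \ref{prop:bounded}) gives $w^-\le u_\lb\le w^+$ on $\ol\Om$, whence $|\lb u_\lb-h_0(c)|\le\lb\|w\|_{L^\infty(\Om)}\to 0$. In particular $v_\lb:=u_\lb-h_0(c)/\lb$ is uniformly bounded by $\|w\|_{L^\infty(\Om)}$ plus a constant, and, by coercivity \ref{itm:C3} applied to the equation below with $\lb v_\lb$ uniformly bounded, $\{v_\lb\}$ is equi-Lipschitz.

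\textbf{Step 2: passing to Theorem \ref{thm:fix}.} The function $v_\lb$ solves
\begin{equation*}
\begin{cases}
\wt H(x,Dv_\lb,\lb v_\lb)\le c, & x\in\Om,\\
\wt H(x,Dv_\lb,\lb v_\lb)\ge c, & x\in\ol\Om,
\end{cases}
\end{equation*}
which is precisely equation \eqref{eq:thm:fix-ge-equation} for the Hamiltonian $\wt H$ and constant $c$ in place of $c(H)$; by the comparison principle this solution is unique, so $v_\lb$ is the function denoted $\vartheta_\lb$ in Theorem \ref{thm:fix}. Applying Theorem \ref{thm:fix} to the pair $(\wt H,c)$ yields $v_\lb\to\sup\wt\cE$ locally uniformly on $\Om$, where $\wt\cE$ and its Mather measures are the objects from that theorem for $\wt H$; by the identifications of the first paragraph $\wt\cE=\cE_c$, hence $v_\lb\to u_c:=\sup\cE_c$. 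Combined with $\lb u_\lb\to h_0(c)$ this is the assertion.

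\textbf{Main obstacle.} Essentially all analytic content is imported from Theorem \ref{thm:fix}, so the work is bookkeeping: the delicate point is verifying that the structural hypotheses, in particular \ref{itm:C6}--\ref{itm:C7}, genuinely hold at the level $h_0(c)$ rather than only at $0$ (trivial for $c$ close to $c(H)$ and, as noted, consistent with the very statement of the theorem), together with the identification of the Mather measures of $\wt H(\cdot,\cdot,0)$ with $\cM_c$. The comparison-principle argument of Step 1 that forces $\lim_{\lb\to0^+}\lb u_\lb=h_0(c)$ is the only new computation, and it is short.
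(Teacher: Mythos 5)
Your proposal is correct and follows essentially the same route as the paper: establish $\lambda u_\lambda\to h_0(c)$ by sandwiching $u_\lambda$ between $w\pm\|w\|_{L^\infty}+h_0(c)/\lambda$ via the comparison principle, then observe that $u_\lambda-h_0(c)/\lambda$ solves the fixed-domain problem for the shifted Hamiltonian $H(x,p,\cdot+h_0(c))$ and invoke Theorem \ref{thm:fix}. Your explicit verification that the shifted Hamiltonian inherits the structural hypotheses (with \ref{itm:C6}--\ref{itm:C7} read at the base point $h_0(c)$) and that its Mather measures are exactly $\mathcal{M}_c$ is bookkeeping the paper leaves implicit, but it is the same argument.
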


\begin{proof}[Proof of Theorem \ref{thm:c-ener-fix}]Suppose $u_0(x)$ is a solution of 
\begin{equation}\label{eq:cell-c-Omega}
    \begin{cases}
        \begin{aligned}
            H(x,D  u(x),h_0(c))\leq c,\quad x\in \Om,\\
            H(x,D  u(x),h_0(c))\geq c,\quad x\in \ol\Om,
        \end{aligned}
    \end{cases}
\end{equation}
we conclude that 
\begin{equation*}
    \check u_0:= u_0-\Vert u_0\Vert_{L^\infty(\Omega)}+\frac{h_0(c)}{\lb}
    \;\;\left(\text{resp.}\;\hat u_0:=u_0+\Vert u_0\Vert_{L^\infty(\Omega)} +\frac{h_0(c)}{\lb} \right)
\end{equation*}
is a viscosity subsolution (resp. supersolution) of \eqref{eq:hj-lb-c-fix}.
Consequently, $\check u_0\leq u_\lb\leq \hat u_0$ then further $\lim_{\lb\rightarrow 0^+}\lb u_\lb=h_0(c)$ due to the Comparison Principle. Besides, 
\begin{equation*}
    \left\Vert u_\lb-\frac{h_0(c)}{\lb}\right\Vert_{L^\infty(\Omega)}
    \leq 
    2 \Vert u_0\Vert_{L^\infty(\Omega)}
    <+\infty,\quad \forall \lb>0. 
\end{equation*}
That implies $\left\{u_\lambda-\lambda^{-1} h_0(c)\right\}_{\lambda>0}$ are uniformly bounded. 
If we impose $\lb\in[0,1]$, then 
\begin{equation*}
    \left\Vert \lb u_\lambda - h_0(c)\right\Vert_{L^\infty(\Omega)} \leq C\lambda,\quad\forall \lb\in[0,1]
\end{equation*}
where $C = 2\Vert u_0\Vert_{L^\infty(\Omega)}$. Consequently, for any  $\lb\in[0,1]$ fixed, due to \ref{itm:C1'} we have
\begin{equation*}
    H\left(x,D u_\lambda(x),h_0(c)-C\lambda\right)
    \leq H(x, D u_\lambda(x),\lb u_\lb(x))
    \leq c,\quad a.e.\; x\in\Om.
\end{equation*}
Using \ref{itm:C3} we obtain the boundedness of $\{Du_\lambda\}$, thus $\{u_\lambda - \lambda^{-1}h_0(c)\}_{\lb>0}$ are uniformly Lipschitz. 
Due to the Arzel\`a-Ascoli Theorem, 
the accumulating function of $\wt u_\lambda:=u_\lb- \lambda^{-1}h_0(c)$ as 
$\lb\rightarrow 0^+$ exists, so it suffices to prove its uniqueness. Actually, $\wt u_\lb$ is the viscosity solution of the equation
\begin{equation*}
    \begin{cases}
        \begin{aligned}
            H(x,D  w(x),\lb w(x)+h_0(c))\leq c,\quad x\in \Om,\\
            H(x,D  w(x),\lb w(x)+h_0(c))\geq c,\quad x\in \ol\Om.
        \end{aligned}
    \end{cases}
\end{equation*}
So we can directly apply Theorem \ref{thm:fix} and get $u_{c}(x)=\lim_{\lb\rightarrow 0^+}\wt u_\lb$ which is a viscosity solution of \eqref{eq:cell-c-Omega} and can be expressed as in our assertion. 
\end{proof}

With the help of Theorem \ref{thm:c-ener-fix}, the proof of Theorem \ref{thm:change0-domain-c-ener} now can be carried out similarly to the procedure of Theorem \ref{thm:ge}, hence we omit it. We emphasize that \ref{itm:C1'} is important in ensuring the existence of solutions to \eqref{eq:hj-lb} where the constant $C_\lambda\in\R$ can be arbitrary.\smallskip

\subsection{The proof of the essential criterion of \eqref{eq:assump_ratio}}\label{ss-proof}

As we mentioned in subsection \ref{ss}, for any $c\in\R$, there exists a unique $h_\lambda(c)$ such that $c$ is the erogdic constant of the Hamiltonian $(x,p)\mapsto H\big((1+r(\lambda)x, p, h_\lambda(c))\big)$. As in Lemma \ref{lem:admis}, we conclude that:
\begin{itemize}

    \item[(i)] $h_\lambda(c)$ is the maximal $a\in \R$ such that the following equation can be solved:
    \[
     H\big((1+r(\lambda))x, Du(x), a\big) \leq c,\quad x\in\Om.
     \]
     
    \item[(ii)] $h_\lambda(c(\lambda)) = 0$ where $c(\lambda)$ is the ergodic constant of $H(x,p,0)$ on $\Omega_\lambda$ (see \eqref{eq:mane-cri-lambda}).
    
    \item[(iii)]  $0 < h_\lambda(c_1) - h_\lambda(c_2) \leq \kappa^{-1}(c_1-c_2)$ if $c_2 <c_1$.
    
\end{itemize}
Let $\mathcal{M}_{c}^\lambda$ be the set of Mather measures associated with  $H\big((1+r(\lambda)x, p, h_\lambda(c))\big)$ and assume $C_\lambda \to c(H)$ as $\lambda \to 0^+$, then due to a similar argument as in Lemma \ref{lem:Mc_conv} we 
see that any accumulating measure $\mu$ of a sequence $\mu_\lambda \in \mathcal{M}_{C_\lambda}^\lambda$ as $\lb\rightarrow 0_+$ has to be contained in $\cM$. With these preparations, we now prove:

\begin{proof}[Proof of Theorem \ref{thm:characterization-zeta}] We have $h_\lambda(c(\lambda)) = 0$, thus $h_\lambda(C_\lambda)\to 0$ as $\lambda\to 0^+$. We have
\begin{align}
&\begin{cases}
\begin{aligned}
    &\int_{T\overline{\Omega}} \Big(L\big((1+r(\lambda))x,v, h_\lambda(C_\lambda)\big) + C_\lambda\Big)\;d\mu(x,v) \geq 0 &\qquad 
    \text{for all}\;\mu \in \mathcal{M},\\
    &\int_{T\overline{\Omega}} \Big(L\big(x,v, 0\big) + c(H)\Big)\;d\mu(x,v) = 0 &\qquad 
    \text{for all}\;\mu \in \mathcal{M}.
\end{aligned}
\end{cases}\label{eq:geq}\\
&\begin{cases}
\begin{aligned}
    &\int_{T\overline{\Omega}} \Big(L\big((1+r(\lambda))x,v, h_\lambda(C_\lambda)\big) + C_\lambda\Big)\;d\nu_\lambda(x,v) = 0 &\quad 
  {\rm for\; all}\; \nu_\lambda \in \mathcal{M}^\lambda_{C_\lambda},\\
    &\int_{T\overline{\Omega}} \Big(L\big(x,v, 0\big) + c(H)\Big)\;d\nu_\lambda(x,v) \geq 0 &\quad 
   {\rm for\; all}\; \nu_\lambda \in \mathcal{M}^\lambda_{C_\lambda}.
\end{aligned}
\end{cases}\label{eq:leq}
\end{align}
Using \ref{itm:L}, \ref{itm:C7} in \eqref{eq:geq}, \eqref{eq:leq} we have
\begin{equation*}
    \eta \int_{T\overline{\Omega}} \left\langle\partial_x L(x,v,0), x\right\rangle d\mu + \liminf_{\lambda\to 0^+} 
     \big(
     \int_{T\overline{\Omega}} 
     \frac
     {
        L(x,v,h_\lambda(C_\lambda))    - L(x,v,0) 
     }{
        \lambda
     }d\mu + 
     \frac{C_\lambda - c(H)}{\lambda} 
     \big) \geq 0
\end{equation*}
for all $\mu \in \mathcal{M}$. Similarly, for any $\nu_{\lb}\in \cM_{C_\lb}^\lb$ converging to $\nu$ as  $\lambda\to 0^+$, there holds
\begin{equation*}
     \eta \int_{T\overline{\Omega}} \left\langle\partial_x L(x,v,0), x\right\rangle d\nu + \limsup_{\lambda\to 0^+} 
     \big(
     \int_{T\overline{\Omega}} 
     \frac
     {
        L(x,v,h_\lambda(C_\lambda))    - L(x,v,0) 
     }{
        \lambda
     }d\nu + 
     \frac{C_\lambda - c(H)}{\lambda} 
     \big) \leq 0
\end{equation*}
since $\nu\in\cM$. Consequently, we deduce that
\begin{equation*}
\begin{aligned}
     &\lim_{\lambda\to 0^+} 
     \left(
     \int_{T\overline{\Omega}} 
     \frac
     {
        L(x,v,h_\lambda(C_\lambda))    - L(x,v,0) 
     }{
        \lambda
     }d\nu + 
     \frac{C_\lambda - c(H)}{\lambda} 
     \right) = \eta \int_{T\overline{\Omega}} \left\langle\partial_x L(x,v,0), -x\right\rangle d\nu.
\end{aligned}    
\end{equation*}
Moreover, by using \ref{itm:C6} we deduce that 
\begin{equation}\label{eq:general-ratio}
\begin{aligned}
     &\lim_{\lambda\to 0^+} 
     \left(
     \frac{C_\lambda - c(H)}{\lambda} +
     \frac{h_\lambda(C_\lambda)}{\lambda}\int_{T\overline{\Omega}} 
     \partial_uL (x,v,0)\; d\nu
     \right) = \eta \int_{T\overline{\Omega}} \left\langle\partial_x L(x,v,0), -x\right\rangle d\nu.
\end{aligned}    
\end{equation}
From this and \ref{itm:C1'} we observe that 
\begin{equation}\label{eq:equivalent-zeta-rho}
    \zeta = \lim_{\lambda\to 0^+} \frac{C_\lambda - c(H)}{\lambda}\;\text{exists} \qquad\Longleftrightarrow \qquad \rho = \lim_{\lambda \to 0^+} \frac{h_\lambda(C_\lambda)}{\lambda}\;\text{exists}.
\end{equation}
Under the existence of $\zeta, \rho$, from \eqref{eq:geq} and \ref{itm:C7} we deduce further that 
\begin{equation*}
    \zeta = \max_{\mu\in \mathcal{M}} \left\lbrace -\rho\int_{T\overline{\Omega}} \partial_u L(x,v,0)\;d\mu +  \eta \int_{T\overline{\Omega}} \partial_x \langle L(x,v,0), -x\rangle\;d\mu \right\rbrace 
\end{equation*}
with the maximum achieved at $\mu = \nu$.
\end{proof}

\begin{rmk}\label{rmk:gene-case} In particular,  if we divide
\eqref{eq:geq} and \eqref{eq:leq} by $r(\lb)$ then make $\lb\rightarrow 0^+$ we can get a twofold conclusion:
\begin{equation*}
    \begin{aligned}
    &\lim_{r(\lambda)\to 0^+} 
     \left(
     \frac{C_\lambda - c(H)}{r(\lambda)} +
     \frac{h_\lambda(C_\lambda)}{r(\lambda)}\int_{T\overline{\Omega}} 
     \partial_uL (x,v,0)\; d\nu^+_0
     \right) = \int_{T\overline{\Omega}} \left\langle\partial_x L(x,v,0), -x\right\rangle d\nu^+_0,\\
     &\lim_{r(\lambda)\to 0^-} 
     \left(
     \frac{C_\lambda - c(H)}{r(\lambda)} +
     \frac{h_\lambda(C_\lambda)}{r(\lambda)}\int_{T\overline{\Omega}} 
     \partial_uL (x,v,0)\; d\nu^-_0
     \right) = \int_{T\overline{\Omega}} \left\langle\partial_x L(x,v,0), -x\right\rangle d\nu^-_0,
    \end{aligned}
\end{equation*}
for certain couple $\nu_0^+, \nu_0^-\in \mathcal{M}$. If $C_\lambda = c(\lambda)$, above two equations lead to the following conclusion in \cite{Tu}:
\[
    \max_{\mu\in \mathcal{M}} \int_{T\overline{\Omega}}  \langle \partial_x L(x,v,0),x\rangle\;d\mu 
  = \lim_
    {\substack{ r(\lb)>0,\\
        \lambda\to 0^+
        }
    } 
    \frac{c(\lambda) - c(H)}{r(\lambda)}, \]
    \[
    \min_{\mu\in \mathcal{M}} \int_{T\overline{\Omega}}  \langle \partial_x L(x,v,0),x\rangle\;d\mu 
      = \lim_
    {
       \substack{ r(\lb)<0,\\
        \lambda\to 0^+}
    } 
    \frac{c(\lambda) - c(H)}{r(\lambda)},
\]
which further implies that \eqref{eq:v-sig} holds if and only if $\lim_{\lb\rightarrow 0^+}\frac{c(\lb)-c(H)}{r(\lb)}$ exists.
\end{rmk}

\begin{proof}[Proof of Corollary \ref{cor:diff-h0(c)-2}] The one-sided and almost everywhere differentiability are clear as $c\mapsto h_0(c)$ is a continuous monotonic function. From the definition of $\mathcal{M},\mathcal{M}_c$ we have
\begin{equation*}
    \begin{aligned}
        &\int_{T\overline{\Omega}} \Big(L(x,v,h_0(c)) + c\Big)d\mu(x,v) \geq 0,\quad 
        \int_{T\overline{\Omega}} \Big(L(x,v,0) + c(H)\Big)d\mu(x,v) = 0,   & &\mu \in \mathcal{M},\\
        &\int_{T\overline{\Omega}} \Big(L(x,v,h_0(c)) + c\Big)d\nu_c(x,v) = 0,\quad 
        \int_{T\overline{\Omega}} \Big(L(x,v,0) + c(H)\Big)d\nu_c(x,v) \geq  0, & &\nu_c\in \mathcal{M}_c.
    \end{aligned}
\end{equation*}
Assume $\nu_c \rightharpoonup \nu_+$ as $c\to c(H)^+$ and $\nu_c\rightharpoonup \nu_-$ as $c\to c(H)^-$, then from \ref{itm:C6} we can respectively evaluate the one-sided derivative of $h_0(c)$ at $c=c(H)$:
\begin{equation*}
\begin{aligned}
    &h'_0\big(c(H)^+\big) = \left(-\int_{T\overline{\Omega}}\partial_uL(x,v,0)d\nu_+(x,v)\right)^{-1} = \min_{\mu\in \mathcal{M}}\left(-\int_{T\overline{\Omega}}\partial_uL(x,v,0)d\mu(x,v)\right)^{-1}\\
    &h'_0\big(c(H)^-\big) = \left(-\int_{T\overline{\Omega}}\partial_uL(x,v,0)d\nu_-(x,v)\right)^{-1} = \max_{\mu\in \mathcal{M}}\left(-\int_{T\overline{\Omega}}\partial_uL(x,v,0)d\mu(x,v)\right)^{-1}.
\end{aligned}
\end{equation*}
The proof is complete.
\end{proof}

Finally, by the same argument as in \eqref{eq:general-ratio} and in the proof of Theorem \ref{thm:change0-domain-c-ener} we obtain Corollary \ref{cor:characterization-zeta-limit-exists}.

\subsection*{Acknowledgment} The work of Jianlu Zhang is supported by the National Key R\&D Program of China (No. 2022YFA1007500) and the National Natural Science Foundation of China (No. 12231010). The authors would like to thank Prof. Hung Vinh Tran for suggesting this topic and for helpful comments. The authors also thank Prof. Hiroyoshi Mitake for patiently checking the earlier version, as well as reinforcing relevant references. Besides, the authors sincerely thank the anonymous referee for his/her comments that greatly improve the presentation.

\appendix
\section{Proofs of some technical results}

\begin{proof}[Proof of Lemma \ref{lem:smooth_smaller_domain}] For (i), if $\Omega$ satisfies \ref{itm:C0''}, we can define $s = \theta^{-1}\delta$. We now show that if $x\in (1-s)\Omega$ then  $\mathrm{dist}(x,\partial\Omega) \geq \delta$. For $y\in \partial\Omega$, we observe that if $\varepsilon>0$ then $(1+\varepsilon)y \in (1+\varepsilon)\partial\Omega$. Let $\varepsilon = s(1-s)^{-1} > 0$, we have
    \begin{equation*}
        (1-s)^{-1}x \in \Omega, \qquad\text{and}\qquad (1-s)^{-1}y \in (1+\varepsilon)\partial\Omega,
    \end{equation*}
and thus by \ref{itm:C0''} we have
\begin{equation*}
    \big|(1-\zeta)^{-1}y  - (1-\zeta)^{-1}x \big| \geq \mathrm{dist}\left((1-\zeta)^{-1}y , \overline{\Omega}\right) \geq \theta \varepsilon.
\end{equation*}
Therefore $|y-x| \geq (1-\zeta)\varepsilon \theta = \zeta \theta = \delta$ by the choice of $\varepsilon$ and $\zeta$ above. Since $y\in \partial\Omega$ is chosen arbitrarily, we obtain $\mathrm{dist}(x,\partial\Omega) \geq \delta$, hence $(1-\theta^{-1}\delta)\subset \Omega^\delta$. \\

For (ii), let $\phi\in \mathrm{C}_c^\infty(\R^n)$ be a smooth function such that $\phi(\cdot) \geq 0$ and $\int_{\R^n} \phi(x)\,dx = 1$. For $\delta>0$ we define $\phi_\delta(x) = \delta^{-n}\phi\left(\delta^{-1}x\right)$ and
\begin{equation*}
    w_\delta(x): = \int_{\Omega}\phi_\delta(x-y)w(y)\;dy = \int_{B(0,\delta)} \phi_\delta(y)w(x-y)\;dy, \qquad x\in \Omega^\delta
\end{equation*}
where $\Omega^\delta = \{x\in \Omega: \mathrm{dist}(x,\partial\Omega) > \delta\}$. As $w$ is Lipschitz, $H(x, Dw(x), 0) \leq c(H)$ for a.e. $x\in \Omega$, therefore using convolution, for every $x\in \Omega^\delta$ we compute
\begin{align}
    H\left(x, Dw_\delta(x),0\right) 
    &= H\left(x, \int_{B(0,\delta)} \phi_\delta(y)Dw(x-y)dy, 0\right) \nonumber\\
    &= H\left(y, \int_{B(0,\delta)} \phi_\delta(y)Dw(x-y)dy, 0\right)  + \varpi_1(C\delta) \nonumber\\
    &\leq \int_{B(0,\delta)}\phi_\delta(y)H(y,Dw(y),0)dy + \varpi_1(C\delta) \leq c(H) + \varpi_1(C\delta) \label{eq:w_delta}
\end{align}
for $x\in \Omega^\delta$ thanks to \ref{itm:C2}, \ref{itm:C4} and the fact that $w$ is Lipschitz.

\begin{itemize}
    \item[$\bullet$] If \ref{itm:C0''} holds then from (i) $\left(1-2\theta^{-1}\delta\right) \Omega \subset \Omega^{2\delta} \subset \Omega^\delta$, thus
    \begin{equation*}
        H(x,Dw_{\delta}(x), 0) \leq c(H) + \varpi_1(C\delta) \qquad\text{in}\; \left(1-2\theta^{-1}\delta\right)\Omega
    \end{equation*}
    where $w_\delta$ satisfies \eqref{eq:w_delta}.
    Let us define
    \begin{equation*}
        \widehat{w}_\delta(x) = \left(\frac{1}{1-2\theta^{-1}\delta}\right)
        w_{\delta}
        \left(
            \left(1-2\theta^{-1}\delta
            \right)x
        \right), \qquad x\in \overline{\Omega}.
    \end{equation*}
    It is clear that $\widehat{w}_\delta\in \mathrm{C}^\infty(\overline{\Omega})$, and 
    \begin{equation*}
        H\left(\left(1-2\theta^{-1}\delta
            \right)x, D\widehat{w}_\delta(x), 0\right) \leq c(H) + \varpi_1(C\delta) \qquad\text{in}\;\Omega.
    \end{equation*}
    Thanks to \ref{itm:C4} and the fact that $w$ is Lipschizt, we deduce that 
    \begin{equation*}
        H\left(x, D\widehat{w}_\delta(x), 0\right) \leq c(H) + 2\varpi_1(C\delta) \qquad\text{in}\;\Omega
    \end{equation*}
    where $C$ is a universal constant that does not depend on $\delta>0$.
    \item[$\bullet$] If $\partial\Omega$ is of class $\mathrm{C}^1$ and \ref{itm:C5} holds then using the smoothness of the unit outward normal vector field $\nu(x):\overline{\Omega}\to \R^n$ we can define a map $T_\delta:\overline{\Omega}\to \overline{\Omega}^{2\delta}$ that maps $\partial\Omega$ to $\partial\Omega^{2\delta}$, $T_\delta(x)=x$ for $x\in \Omega^{4\delta}$ and (see also \cite[Theorem VII.1]{C-DL})
    \begin{equation}\label{eq:est_T_delta}
        \sup_{x\in \overline{\Omega}}
        \big(|T_\delta(x)-x| + |\nabla T_\delta(x) - \mathbf{Id}| \big) \leq C\delta .
    \end{equation}
    Let $\tilde{w}_\delta = w_\delta\circ T_\delta^{-1}$ defined in $\Omega^\delta$,
    where $w_\delta$ satisfies \eqref{eq:w_delta} then $\tilde{w}_\delta(x)\in \mathrm{C}^1(\overline{\Omega})$. Thanks to \ref{itm:C3} and \eqref{eq:est_T_delta} we have $ H(x,D\tilde{w}_\delta(x), 0) \leq c(H) + C\delta + \varpi_1(C\delta)$ in $\Omega$.
\end{itemize}
The proof is complete.
\end{proof}

Next, we give a proof for the comparison principle in Theorem \ref{thm:comp-p}. 

\begin{proof}[Proof of Theorem \ref{thm:comp-p}] Assume that $\max_{\overline{\Omega}}(v_1-v_2) = v_1(x_0) - v_2(x_0) > 0$. For $\varepsilon>0$, let $v_1^\varepsilon(x) = (1+\varepsilon) v_1\left((1+\varepsilon)^{-1}x\right)$ for $x\in (1+\varepsilon)\overline{\Omega}$. Then 
\begin{equation*}
    H\left(\frac{x}{1+\varepsilon}, Dv^\varepsilon_1(x), \frac{v_1^\varepsilon(x)}{1+\varepsilon}\right) \leq 0 \qquad \text{in}\;\Omega.
\end{equation*}
Consider 
\begin{equation*}
    \Phi(x,y) = v^\varepsilon_1\left(x\right)-v_2(y) - \frac{|x-y|^2}{\varepsilon^2}, \qquad (x,y) \in (1+\varepsilon)\overline{\Omega}\times \overline{\Omega}.
\end{equation*}
By compactness of $\overline{\Omega}$, we can assume $\Phi$ achieves its maximum at $(x_\varepsilon,y_\varepsilon) \in (1+\varepsilon)\overline{\Omega}\times \overline{\Omega}$. As $\Phi(x_\varepsilon, y_\varepsilon)\geq \Phi(y_\varepsilon, y_\varepsilon)$ we have
\begin{equation*}
    \frac{|x_\varepsilon-y_\varepsilon|^2}{\varepsilon^2} \leq v_1^\varepsilon\left(x_\varepsilon\right) - v_1^\varepsilon\left(y_\varepsilon\right) \leq (1+\varepsilon)\omega_1(|x_\varepsilon-y_\varepsilon|)
\end{equation*}
where $\omega_1(\cdot)$ is the modulus of continuity of $v_1$. We obtain $|x_\varepsilon-y_\varepsilon| = o(\varepsilon)$ as $\varepsilon\to 0^+$ and thus, from \ref{itm:C0''} we obtain $x_\varepsilon\in (1+\varepsilon)\Omega$ for $\varepsilon$ small. Therefore
\begin{equation*}
    H\left(\frac{x_\varepsilon}{1+\varepsilon}, \frac{2(x_\varepsilon-y_\varepsilon)}{\varepsilon^2}, \frac{v_1^\varepsilon(x_\varepsilon)}{1+\varepsilon}\right) \leq 0 \qquad\text{and}\qquad H(y_\varepsilon, \frac{2(x_\varepsilon-y_\varepsilon)}{\varepsilon^2}, v_2(y_\varepsilon)) \geq 0.
\end{equation*}
By compactness, in the limit we easily see that $(x_\varepsilon, y_\varepsilon)\to (x_0,x_0)$ and 
\begin{equation*}
    \lim_{\varepsilon\to 0} \left[v_1\left(\frac{x_\varepsilon}{1+\varepsilon}\right) - v_2(y_\varepsilon)\right] = v_1(x_0) - v_2(x_0) > 0
\end{equation*}
by our assumption, thus 
\begin{align*}
\kappa \left[v_1\left(\frac{x}{1+\varepsilon} \right) - v_2(y_\varepsilon)\right] \leq H\left(y_\varepsilon, p_\varepsilon, \frac{v_1^\varepsilon(x_\varepsilon)}{1+\varepsilon}\right)  - H(y_\varepsilon, p_\varepsilon, v_2(y_\varepsilon)) 
\end{align*}
where $p_\varepsilon = \frac{2(x_\varepsilon-y_\varepsilon)}{\varepsilon^2}$ is bounded, due to $v_1$ is a subsolution and the coercivity \ref{itm:C3}. From \ref{itm:C4} we have
\begin{align*}
     \left|H\left(\frac{x_\varepsilon}{1+\varepsilon}, p_\varepsilon, \frac{v_1^\varepsilon(x_\varepsilon)}{1+\varepsilon}\right) -  H\left(y_\varepsilon, p_\varepsilon, \frac{v_1^\varepsilon(x_\varepsilon)}{1+\varepsilon}\right) \right| \leq \varpi_1\left(\Vert v_1\Vert_{L^\infty}, \left|\frac{x_\varepsilon}{1+\varepsilon} - y_\varepsilon\right|\left(|p_\varepsilon| + 1\right)\right).
\end{align*}
Combining these facts we obtain
\begin{equation*}
    \kappa \left[v_1\left(\frac{x}{1+\varepsilon} \right) - v_2(y_\varepsilon)\right] \leq \varpi_1\left(\Vert v_1\Vert_{L^\infty}, \left|\frac{x_\varepsilon}{1+\varepsilon} - y_\varepsilon\right|\left(|p_\varepsilon| + 1\right)\right).
\end{equation*}
Let $\varepsilon \to 0$ we obtain a contradiction to the fact that $\max_{\overline{\Omega}} (v_1-v_2) > 0$, therefore the comparison principle holds true $v_1 \leq v_2$ on $\overline{\Omega}$.
\end{proof}

\bibliography{refs.bib}{}
\bibliographystyle{acm}
\end{document}